\newtheorem*{rep@theorem}{\rep@title}
\newcommand{\newreptheorem}[2]{%
	\newenvironment{rep#1}[1]{%
		\def\rep@title{#2 \ref{##1}}%
		\begin{rep@theorem}}%
		{\end{rep@theorem}}}
\theoremstyle{plain}
\newtheorem{theorem}{Theorem}[section]
\newtheorem*{theorem*}{Theorem}
\newtheorem{lemma}[theorem]{Lemma}
\newtheorem{proposition}[theorem]{Proposition}
\newtheorem{corollary}[theorem]{Corollary}
\theoremstyle{definition}
\newtheorem{definition}[theorem]{Definition}
\newtheorem*{definition*}{Definition}
\newtheorem{example}[theorem]{Example}
\theoremstyle{remark}
\newtheorem{remark}[theorem]{Remark}
\numberwithin{equation}{section}
\newcommand{\NN}{\mathbb{N}} 
\newcommand{\ZZ}{\mathbb{Z}} 
\newcommand{\CC}{\mathbb{C}} 
\newcommand{\id}{\mathrm{id}} 
\newcommand{\GG}{\mathbb{G}} 
\DeclareMathOperator{\colim}{{colim}}
\newcommand{\nc}{\mathrm{nc}}
\newcommand{\op}{\mathrm{op}}
\newcommand{\cd}{\mathrm{cd}}
\DeclareMathOperator{\chr}{char}
\newcommand\sA{\mathcal{A}}
\newcommand\sO{\mathcal{O}} 
\newcommand{\QAlg}{\mathsf{QAlg}}
\newcommand{\Space}{\mathsf{S}}
\newcommand{\Stk}{\mathsf{Stk}}
\newcommand\Mod{\mathsf{Mod}}
\newcommand\Cat{\mathsf{Cat}}
\DeclareMathOperator{\Map}{Map}
\newcommand{\Alg}{\mathsf{Alg}}
\newcommand{\Aff}{\mathsf{Aff}}
\newcommand{\ucd}{$\mathrm{ucd}_{0}$\xspace}
\renewcommand{\cd}{$\mathrm{cd}_{0}$\xspace}
\renewcommand{\AA}{\mathbb{A}} 
\newcommand{\PP}{\mathbb{P}} 
\DeclareMathOperator{\Sym}{Sym} 
\DeclareMathOperator{\Spec}{Spec} 
\DeclareMathOperator{\Proj}{Proj} 
\DeclareMathOperator{\Bl}{Bl}
\newcommand{\QCoh}{\mathsf{QCoh}}
\newcommand{\extd}{\mathrm{ext}} 
\newcommand{\cl}{\mathrm{cl}} 
\newcommand{\Ga}{\mathbb{G}_a}
\DeclareMathOperator{\GL}{GL}
\newcommand{\lr}{\longrightarrow}
\def\git{/\!\!/}
\newcommand{\oO}{\mathcal{O}}
\newcommand{\mM}{\mathcal{M}}
\newcommand{\rR}{\mathcal{R}}
\renewcommand{\tilde}{\widetilde}
\renewcommand{\hat}{\widehat}
\newcommand{\BL}{{\mathbb{L}}}
\def\@tocline#1#2#3#4#5#6#7{\relax
	\ifnum #1>\c@tocdepth 
	\else
	\par \addpenalty\@secpenalty\addvspace{#2}%
	\begingroup \hyphenpenalty\@M
	\@ifempty{#4}{%
		\@tempdima\csname r@tocindent\number#1\endcsname\relax
	}{%
		\@tempdima#4\relax
	}%
	\parindent\z@ \leftskip#3\relax \advance\leftskip\@tempdima\relax
	\rightskip\@pnumwidth plus4em \parfillskip-\@pnumwidth
	#5\leavevmode\hskip-\@tempdima
	\ifcase #1
	\or\or \hskip 1em \or \hskip 2em \else \hskip 3em \fi%
	#6\nobreak\relax
	\hfill\hbox to\@pnumwidth{\@tocpagenum{#7}}\par
	\nobreak
	\endgroup
	\fi}
\title[Derived Good Moduli Spaces]{Good Moduli Spaces in Derived Algebraic Geometry}
\author[E. Ahlqvist]{Eric Ahlqvist}
\address{Department of Mathematics, Stockholm University, 106 91 Stockholm, Sweden}
\email{eric.ahlqvist@math.su.se}
\author[J. Hekking]{Jeroen Hekking} 
\address{Department of Mathematics, Stockholm University, 106 91 Stockholm, Sweden}
\email{jeroen.hekking@math.su.se}
\author[M. Pernice]{Michele Pernice}
\address{Department of Mathematics,
University of Washington, 98195 Seattle (WA), USA}
\email{mpernice@uw.edu}
\author[M. Savvas]{Michail Savvas}
\address{Radix Trading, New York, NY, USA}
\email{michail.d.savvas@gmail.com}
\date{\today}
\begin{document}

\begin{abstract}
		We develop a theory of good moduli spaces for derived Artin stacks, which naturally generalizes the classical theory of good moduli spaces introduced by Alper. As such, many of the fundamental results and properties regarding good moduli spaces for classical Artin stacks carry over to the derived context. In fact, under natural assumptions, often satisfied in practice, we show that the derived theory essentially reduces to the classical theory. As applications, we establish derived versions of the \'{e}tale slice theorem for good moduli spaces and the partial desingularization procedure of good moduli spaces.            
	\end{abstract}
 
\thanks{Eric Ahlqvist was supported by the Knut and Alice Wallenberg Foundation 2021.0279 and 2024.0276.}
\thanks{Jeroen Hekking was supported by the Knut and Alice Wallenberg Foundation 2021.0287, and by the Collaborative Research Centre SFB 1085 \emph{Higher Invariants -- Interactions between Arithmetic Geometry and Global Analysis}, project number 224262486.}
\thanks{Michele Pernice was supported by the Knut and Alice Wallenberg Foundation 2021.0291.}

\maketitle

\setcounter{tocdepth}{1}
\tableofcontents
\setcounter{tocdepth}{2}

\section*{Introduction}

\subsection*{Some brief history} Moduli theory focuses on the understanding of geometric objects and how they can vary in families. The fundamental gadget which the modern foundations of the subject rest on is the notion of an algebraic stack, first introduced in \cite{DeligneMumfordSt} and later generalized in \cite{ArtinStacks}. A parallel, important consideration, which is often useful in the study of the properties and behaviour of algebraic stacks, is the construction of a moduli space, i.e., a scheme or algebraic space, which keeps track of sufficient information about the stack, e.g., certain equivalence classes of the objects it parametrizes. 

Mumford developed Geometric Invariant Theory (GIT) \cite{MumfordGIT} in order to be able to define moduli spaces for quotient stacks for reductive group actions on projective schemes equipped with appropriate extrinsic data (a linearization of the action). GIT has led to constructions of moduli spaces of curves and abelian varieties and has been widely applied in a plethora of other contexts, such as moduli spaces of sheaves (cf.~\cite{HuyLehn}). 

Still, many important examples of interest that arise in practice do not fall into the GIT regime. Keel--Mori \cite{KeelMori} first partially lifted this restriction by showing that any stack with finite inertia admits a coarse moduli space. Subsequently, Alper \cite{AlperGood} introduced and developed \cite{AlperFundLemma, AlperLocalProp} the theory of good moduli spaces, which should be viewed as a complete generalization of GIT and allows for the construction of good moduli spaces in a wide variety of new settings. Paired with recent results on existence criteria \cite{AlperExistence}, Alper's theory has been extremely influential and allowed for important breakthroughs in moduli theory, including moduli of objects in abelian categories \cite{AlperExistence} and $K$-semistable Fano varieties in the minimal model program \cite{GMSKStab}.
\smallskip

The purpose of this paper is to initiate the study of good moduli spaces in derived algebraic geometry, generalizing the above theory of good moduli spaces for classical stacks. Derived algebraic stacks \cite{HAG2DAG} can be thought of as (co)homological thickenings of classical stacks and materialize the hidden smoothness principle of Kontsevich \cite{Konts}, with singular classical stacks being truncations of better behaved derived stacks. Even for classical applications, recent breakthroughs in derived algebraic geometry \cite{ToenDerived} and shifted symplectic structures \cite{PTVV} have made derived stacks increasingly important. As such, the existence of derived good moduli spaces for derived stacks commonly used in practice is a desirable feature.

\subsection*{Statement of results} We now proceed to briefly state our main results. For purposes of simplicity in this introduction, we assume that all derived Artin stacks are (homotopically) finitely presented and qcqs over $\CC$ in what follows. We refer the reader to the main text for the precise assumptions made in each statement.

Here is our main definition.

\begin{definition*}[Definition~\ref{def:gms}]
    A morphism $q \colon X \to Y$ from a derived Artin stack to a derived algebraic space $Y$ is a good moduli space if it satisfies the following conditions:
    \begin{enumerate}
        \item $q$ is universally of cohomological dimension $0$.
        \item The natural morphism $\oO_Y \to q_\ast \oO_X$ is an equivalence.
    \end{enumerate}
\end{definition*}

We comment that when $X$ is classical, we show that $Y$ must be classical as well (see Proposition~\ref{Prop:Classical stack has classical GMS}). In fact, one of our main results says that the existence of a good moduli space can be detected at the level of the classical underlying stack.

\begin{theorem*} [Theorem~\ref{Thm:GMSvsGMScl}]
(i) If $X_\cl$ admits a good moduli space $q_\cl \colon X_\cl \to Y_\cl$, then $X$ admits a good moduli space $q' \colon X \to Y$ such that $q'_\cl \simeq q_\cl$.

(ii) If $X$ admits a good moduli space $q\colon X \to Y$, then $q_\cl$ is a good moduli space for $X_\cl$.
\end{theorem*}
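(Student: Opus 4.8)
\emph{Plan.} I treat the two parts separately, writing $j_X\colon X_\cl\hookrightarrow X$ and $j_Y\colon Y_\cl\hookrightarrow Y$ for the canonical closed immersions, and freely using that a morphism of cohomological dimension $0$ has $t$-exact pushforward (and, if quasi-compact, preserves connective covers), that affine morphisms are universally of cohomological dimension $0$, and that this class of morphisms is closed under composition and base change.

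\emph{Part (ii).} Suppose $q\colon X\to Y$ is a good moduli space. For the first condition, for any $T\to Y_\cl$ one has a canonical equivalence $X_\cl\times_{Y_\cl}T\simeq X_\cl\times_X(X\times_Y T)$, which exhibits $X_\cl\times_{Y_\cl}T\to T$ as the composite of the base change of the affine morphism $X_\cl\to X$ along $X\times_Y T\to X$ with the base change of $q$ along $T\to Y$; as both are of cohomological dimension $0$, so is the composite, and hence $q_\cl$ is universally of cohomological dimension $0$. For the second condition, apply $q_*$ to the fibre sequence $\tau_{\ge 1}\oO_X\to\oO_X\to\tau_{\le 0}\oO_X$. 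Since $q_*$ is $t$-exact and $q$ is quasi-compact, $q_*\tau_{\ge 1}\oO_X$ has homotopy concentrated in degrees $\ge 1$, so $q_*\oO_X\to q_*\tau_{\le 0}\oO_X$ is an isomorphism on $\pi_0$; moreover $q_*\tau_{\le 0}\oO_X$ lies in the heart by $t$-exactness. Using $q\circ j_X=j_Y\circ q_\cl$, the identity $(j_X)_*\oO_{X_\cl}\simeq\tau_{\le 0}\oO_X$, and $q_*\oO_X\simeq\oO_Y$, this says that the natural map between heart objects $(j_Y)_*\oO_{Y_\cl}=\tau_{\le 0}\oO_Y\to q_*\tau_{\le 0}\oO_X=(j_Y)_*(q_\cl)_*\oO_{X_\cl}$ is an equivalence, and by naturality it is $(j_Y)_*$ of the natural map $\oO_{Y_\cl}\to(q_\cl)_*\oO_{X_\cl}$. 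Since $(j_Y)_*$ is exact and conservative it reflects equivalences, so $\oO_{Y_\cl}\xrightarrow{\ \sim\ }(q_\cl)_*\oO_{X_\cl}$ and $q_\cl$ is a good moduli space.

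\emph{Part (i).} First note that whichever $Y$ we produce with $q'_\cl\simeq q_\cl$, the morphism $q'$ is automatically universally of cohomological dimension $0$: this may be checked on hearts, where, since every discrete quasi-coherent sheaf on $X\times_Y T$ is pushed forward from the classical truncation $(X\times_Y T)_\cl$, which together with its map to $T_\cl$ is the classical base change of $q_\cl$, it reduces to cohomological affineness of $q_\cl$ (stable under base change). So the task is to build a derived algebraic space $Y$ with $Y_\cl$ as classical truncation and a morphism $q'\colon X\to Y$ extending $q_\cl$ with $\oO_Y\simeq q'_*\oO_X$. I do this by induction along the Postnikov tower $X_\cl=X^{(0)}\hookrightarrow X^{(1)}\hookrightarrow\cdots$, $X^{(n)}=\tau_{\le n}X$: I construct derived algebraic spaces $Y_\cl=Y^{(0)}\hookrightarrow Y^{(1)}\hookrightarrow\cdots$ and morphisms $q^{(n)}\colon X^{(n)}\to Y^{(n)}$ extending $q^{(n-1)}$ with $\oO_{Y^{(n)}}\xrightarrow{\ \sim\ }q^{(n)}_*\oO_{X^{(n)}}$, the base case being the hypothesis on $q_\cl$. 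Given stage $n-1$, recall that $X^{(n)}$ is the square-zero extension of $X^{(n-1)}$ by the $\oO_{X_\cl}$-module $\pi_n\oO_X[n]$ classified by a derivation $\eta_X\colon L_{X^{(n-1)}}\to\pi_n\oO_X[n+1]$. Set $M_n:=(q_\cl)_*\pi_n\oO_X$, a coherent $\oO_{Y_\cl}$-module (using that $q_\cl$ is a good moduli space and $X$ is finitely presented), and let $Y^{(n)}$ be the square-zero extension of $Y^{(n-1)}$ by $M_n[n]$ classified by the composite derivation $\eta_Y\colon L_{Y^{(n-1)}}\to q^{(n-1)}_*L_{X^{(n-1)}}\xrightarrow{q^{(n-1)}_*\eta_X}q^{(n-1)}_*(\pi_n\oO_X[n+1])=M_n[n+1]$, the last identification by $t$-exactness of $q^{(n-1)}_*$ on discrete sheaves. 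A square-zero extension of a finitely presented derived algebraic space by a connective shift of a coherent module is again one, so $Y^{(n)}$ is a finitely presented derived algebraic space with $(Y^{(n)})_\cl=Y_\cl$, and the compatibility of $\eta_X$ with $\eta_Y$, which holds by construction, yields the lift $q^{(n)}\colon X^{(n)}\to Y^{(n)}$. Applying $q^{(n)}_*$ to the square-zero fibre sequence $\pi_n\oO_X[n]\to\oO_{X^{(n)}}\to\oO_{X^{(n-1)}}$, using $q^{(n)}_*(\pi_n\oO_X[n])=M_n[n]$ and the inductive identification of $q^{(n)}_*\oO_{X^{(n-1)}}$ with $\oO_{Y^{(n-1)}}$, and comparing with the square-zero fibre sequence defining $Y^{(n)}$, gives $\oO_{Y^{(n)}}\xrightarrow{\ \sim\ }q^{(n)}_*\oO_{X^{(n)}}$. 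Finally put $Y:=\colim_n Y^{(n)}$; since $X=\colim_n X^{(n)}$ the $q^{(n)}$ assemble to $q'\colon X\to Y$ with $q'_\cl=q_\cl$, and as $q'_*$ is a right adjoint it commutes with the Postnikov limit, so $q'_*\oO_X=\lim_n q'_*\tau_{\le n}\oO_X=\lim_n\oO_{Y^{(n)}}=\oO_Y$.

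\emph{Main obstacle.} In (ii) the only genuine input is the interaction of $q_*$ with truncations, which needs cohomological dimension $0$ (not merely exactness on the heart) together with quasi-compactness; after that the argument is formal. The real work is in (i): setting up the cotangent-complex and square-zero formalism so that $\eta_Y$ is functorially induced from $\eta_X$ and produces the lift $q^{(n)}$, verifying that each $Y^{(n)}$ remains a finitely presented derived algebraic space, and checking that the comparison maps of fibre sequences are equivalences. Convergence of the Postnikov towers (left-completeness of $\QCoh$) is harmless in the finitely presented, qcqs setting over $\CC$. I expect the per-stage compatibility of the square-zero data — constructing $q^{(n)}$ and the equivalence $\oO_{Y^{(n)}}\simeq q^{(n)}_*\oO_{X^{(n)}}$ — to be the technical heart of the proof.
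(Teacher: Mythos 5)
Your proposal follows essentially the same route as the paper: part (ii) via the $t$-exactness of $q_*$ and the equivalence of hearts along $Y_\cl \to Y$, and part (i) by induction along the Postnikov tower, taking $Y^{(n)}$ to be the square-zero extension of $Y^{(n-1)}$ by $(q_\cl)_*\pi_n\oO_X[n]$ classified by the derivation obtained by pushing forward the one classifying $X^{(n)} $. The compatibility you assert ``by construction'' is exactly what the paper spells out: it realizes both square-zero extensions as pushouts along $X[M] \to X$ and $Y[q_*M] \to Y$ and checks $q \circ \tilde{\alpha} \simeq \tilde{\beta} \circ \tilde{q}$ by naturality of the counit $q^*q_*M \to M$; this is the one step you have deferred rather than proved, and you correctly identify it as the technical heart.

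One step in part (i) is wrong as literally written, though it is easily repaired. You argue that $q'$ is automatically \ucd because, after pushing a discrete sheaf on $X \times_Y T$ down from its classical truncation, the question ``reduces to cohomological affineness of $q_\cl$''. Cohomological affineness is strictly weaker than being \cd --- see Example~\ref{example:gms1}, where the universal torsor $* \to BA$ over an abelian variety is cohomologically affine but not \cd --- so that reduction does not yield connectivity of the pushforward. What your argument actually needs, and what the hypothesis supplies because $q_\cl$ is a good moduli space in the sense of Definition~\ref{def:gms}, is that $q_\cl$ is \ucd, so that its base changes are \cd and push discrete sheaves into the heart. With that substitution your verification goes through; the paper instead deduces $t$-exactness of $q'_*$ from $j_*q_* \simeq q'_*i_*$ together with the fact that $i_*$ and $j_*$ are equivalences on hearts, and then upgrades \cd to \ucd via Proposition~\ref{Prop:quasi-aff_cd_ucd_$t$-exact}.
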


Interestingly, while condition (ii) of our definition is identical to the $\oO$-connectedness condition imposed by Alper, condition (i) could in general be stronger than requiring that $q$ is cohomologically affine, as Alper does. The reason we need to impose a stronger condition is to ensure that the pushforward 
functor $q_\ast$ is sufficiently well-behaved on derived stacks. In practice, this distinction is fairly benign, as it disappears for Noetherian $X$ with quasi-affine diagonal (see Proposition~\ref{Prop:comparison between Alper and derived GMS}), and so our moduli spaces are the same as the ones defined by Alper in that case. See Subsection~\ref{subsection:different notions of morphisms and gms} for a discussion of different notions of good moduli spaces.

Another main result of the paper is the universality of good moduli spaces.

\begin{theorem*}[Theorem~\ref{thm:universality of gms}]
Good moduli spaces are universal for maps to derived algebraic spaces.
\end{theorem*}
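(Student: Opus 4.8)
We have to show that for every derived algebraic space $Z$ the precomposition map $q^{\ast}\colon\Map(Y,Z)\to\Map(X,Z)$ is an equivalence of spaces. The case $Z=\Spec A$ affine already isolates the mechanism: for any derived stack $W$ one has $\Map(W,\Spec A)\simeq\Map_{\DAlg}(A,\Gamma(W,\oO_W))$ by the adjunction between $\Spec$ and global sections, while condition (ii) of Definition~\ref{def:gms} gives $\Gamma(X,\oO_X)\simeq\Gamma(Y,q_{\ast}\oO_X)\simeq\Gamma(Y,\oO_Y)$; hence $q^{\ast}$ is an equivalence for $Z$ affine. The plan is to reduce the general case to this one by a d\'evissage in the ``base'', the ``algebraic space'' and the ``derived'' directions.

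First I would reduce the base: since $\Map(-,Z)$ is an \'etale sheaf, good moduli spaces are stable under \'etale base change, and both are compatible with \v{C}ech nerves, an \'etale cover of $Y$ by affine derived schemes reduces us to $Y=\Spec R$ with $R\simeq\Gamma(X,\oO_X)$. Next I would reduce the target to affine using the structural results on good moduli spaces established earlier, which I take for granted: $q$ is surjective and universally closed, every fibre contains a unique closed point, $|Y|$ carries the quotient topology of $|X|$, and good moduli spaces restrict to saturated open substacks. It follows that for $f\colon X\to Z$ and an open $Z'\subseteq Z$ the preimage $f^{-1}(Z')$ is saturated for $q$ --- if a closed point $x_0\in f^{-1}(Z')$ has $q(x)=q(x_0)$, then $x$ specialises to $x_0$, so $f(x_0)\in\overline{\{f(x)\}}$, forcing $f(x)\in Z'$ --- hence $f^{-1}(Z')=q^{-1}(V')$ for an open $V'\subseteq Y$ and $q\colon q^{-1}(V')\to V'$ is again a good moduli space. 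Combined with Zariski descent of mapping spaces and the affine case on the intersections, this treats $Z$ a derived scheme; for a general derived algebraic space one passes to an \'etale presentation of $Z$ and invokes in addition the local structure (\'etale slice) theorem, which exhibits $X$ \'etale-locally over the now affine base as a quotient stack and so reduces to the affine target. The gluings that assemble $Y\to Z$ from local pieces are controlled by the uniqueness in condition (ii): two maps out of a saturated open of $Y$ that become equal after composing with the surjection $q$ agree on underlying spaces, and on structure sheaves their discrepancy is annihilated by the isomorphism $\oO\simeq q_{\ast}\oO$ of the restricted good moduli space.

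It remains to propagate through the derived thickening of the target. Here I would use that a derived algebraic space is nilcomplete, reducing to truncated targets, and induct on the truncation level, the passage from $\tau_{\le n-1}Z$ to $\tau_{\le n}Z$ being governed by deformation theory and the cotangent complex. The base case is classical: a classical algebraic space $Z$, regarded as a derived stack, satisfies $\Map(W,Z)\simeq\Map(W_\cl,Z)$ for every derived Artin stack $W$ (its values depending only on $\pi_0$ of the test ring), and $q_\cl\colon X_\cl\to Y_\cl$ is a classical good moduli space by Theorem~\ref{Thm:GMSvsGMScl}(ii), so Alper's universality theorem \cite{AlperGood} applies. At each inductive step the only new input required is that the natural map $\Gamma(Y,\mathcal{F})\to\Gamma(X,q^{\ast}\mathcal{F})$ is an equivalence for quasi-coherent $\mathcal{F}$ on $Y$; this follows from the projection formula $q_{\ast}q^{\ast}(-)\simeq(-)\otimes_{\oO_Y}q_{\ast}\oO_X$ together with $q_{\ast}\oO_X\simeq\oO_Y$, condition (i) --- that $q$ is \emph{universally} of cohomological dimension $0$ --- ensuring $q_{\ast}$ is exact and commutes with the colimits needed for the formula to apply to the bounded complexes that occur.

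The step I expect to be the main obstacle is the coordination of these reductions rather than any single one: an \'etale atlas of the target does not pull back to a saturated \'etale cover of $X$, so $Z$ can only be simplified after the base has been made affine and the local structure theorem brought in, and the $\infty$-categorical coherence of the descent data assembled from covers of $Y$, a presentation of $Z$, and the truncation tower of $Z$ has to be tracked simultaneously. The genuinely homotopical content is otherwise mild: everything is governed by $q_{\ast}$, whose good behaviour on derived stacks is exactly what condition (i) of Definition~\ref{def:gms} is designed to supply.
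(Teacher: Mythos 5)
Your core mechanism for handling the derived direction is the right one and matches the paper's: the identification $\Map(f^*\BL_T,M)\simeq\Map(\BL_T,q_*M)$ coming from $q^*\dashv q_*$ and $q_*\oO_X\simeq\oO_Y$ (equivalently, $q_*q^*\simeq\id$ via the projection formula) is exactly what drives Lemma~\ref{Lem:s0-pushout}, and the base of the induction is Alper's classical universality, as in the paper. But your reduction of the target $Z$ to the affine case has a genuine gap. An \'{e}tale presentation of $Z$ does not pull back to open (let alone saturated) substacks of $X$, so the saturated-opens argument only covers Zariski-local targets; to bridge this you invoke the \'{e}tale slice theorem, which (a) is proved later and only under extra hypotheses ($X_\cl$ of finite presentation over a field with affine stabilizers and separated diagonal) that the universality statement does not assume, and (b) does not actually finish the job: even after replacing $X$ \'{e}tale-locally by $[U/G_x]$ with $U$ affine, a map $[U/G_x]\to Z$ still has a general algebraic space as target, and descending it to $U\git G_x$ is precisely the universality you are trying to prove. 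The gluing of local pieces you describe is likewise the content of the statement (the paper's Proposition on gluing good moduli spaces is deduced \emph{from} Theorem~\ref{thm:universality of gms}), so this branch of the d\'{e}vissage is circular. Finally, your truncation step conflates the two Postnikov towers: nilcompleteness gives $\Map(W,Z)\simeq\varprojlim\Map(W_{\leq n},Z)$, a d\'{e}vissage on the \emph{source}, whereas writing $Z=\varinjlim\tau_{\leq n}Z$ is a colimit in the target and does not interact well with $\Map(W,-)$ without first truncating $W$.

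The paper's proof shows that no reduction on $Z$ is needed at all. One runs the Postnikov towers of $X$ and $Y$, uses the construction of Theorem~\ref{Thm:GMSvsGMScl} to get compatible good moduli spaces $q_{\leq n}\colon X_{\leq n}\to Y_{\leq n}$, and proves via Lemma~\ref{Lem:s0-pushout} (whose proof is your adjunction computation) that each square
\begin{center}
\begin{tikzcd}
X_{\leq n} \arrow[r] \arrow[d] & X_{\leq n+1} \arrow[d] \\
Y_{\leq n} \arrow[r] & Y_{\leq n+1}
\end{tikzcd}
\end{center}
is a pushout of stacks. This identity is independent of any target, so $\Map(Y_{\leq n+1},Z)\simeq\Map(X_{\leq n+1},Z)\times_{\Map(X_{\leq n},Z)}\Map(Y_{\leq n},Z)$ for every $Z$, and the classical case (which already handles arbitrary classical algebraic-space targets) collapses the limit. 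You should replace the \'{e}tale-presentation and slice-theorem reductions with this pushout statement.
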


Following Alper's steps \cite{AlperGood}, we establish several properties of good moduli spaces, which mirror the classical theory and are listed in Lemma~\ref{Lem:GMS_base-change} and Lemma~\ref{lem:gms_prop}. Additionally, we prove results on good moduli spaces of closed substacks, gluing good moduli spaces and the descent of \'{e}tale morphisms from stacks to their good moduli spaces.
\smallskip

We provide several applications of our theory. The first is a fully derived version of the \'{e}tale slice theorem for good moduli spaces \cite{AlperLuna}. A simplified statement sketch is below (see the main text for all assumptions).

\begin{theorem*}[Theorem~\ref{thm:derived slice thm}]
Suppose that $X$ is a derived Artin stack with good moduli space $q\colon X\to Y$ with affine diagonal. Let $ x \in |X_\cl|$ be a closed point with (reductive) stabilizer $G_x$. 

Then, there exists a strongly \'{e}tale morphism $\Phi \colon [U / G_x] \to X$, where $U$ is a derived affine scheme with a $G_x$-action, $u \in U$ is fixed by $G_x$ and maps to $x$ via $\Phi$, fitting in a Cartesian square
    \begin{align*}
        \xymatrix{
        [U / G_x] \ar[d]_-{q'} \ar[r]^-{\Phi} & X \ar[d]^-{q} \\
        U \git G_x \ar[r] & Y,
        }
    \end{align*}
where $U \git G_x$ denotes the good moduli space of $[U/G_x]$.
\end{theorem*}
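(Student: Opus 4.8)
The plan is to reduce to the classical \'etale slice theorem and then promote its output to the derived setting. Concretely, by Theorem~\ref{Thm:GMSvsGMScl}(ii) the induced morphism $q_\cl\colon X_\cl\to Y_\cl$ is a good moduli space, and $X_\cl$ has affine diagonal, $\Delta_{X_\cl}$ being a base change of $\Delta_X$. Since we work over $\CC$, the reductive group $G_x$ is linearly reductive, so the classical \'etale slice theorem \cite{AlperLuna} applies to $(X_\cl,q_\cl,x)$ and produces a classical affine scheme $U_0$ with a $G_x$-action, a $G_x$-fixed point $u_0\in U_0$ lying over $x$, and a strongly \'etale morphism $\Phi_0\colon[U_0/G_x]\to X_\cl$ that is Cartesian over an \'etale morphism $U_0\git G_x\to Y_\cl$.

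Next I would lift the \'etale neighbourhood. As the small \'etale site of a qcqs derived algebraic space is invariant under derived thickenings, the \'etale morphism $U_0\git G_x\to Y_\cl$ lifts to an \'etale morphism $V\to Y$ with $V_\cl\simeq U_0\git G_x$; being a derived algebraic space with affine classical truncation, $V$ is a derived affine scheme. Put $\mathcal{W}:=X\times_Y V$, with projections $\Phi\colon\mathcal{W}\to X$ and $q'\colon\mathcal{W}\to V$. Then $\Phi$ is \'etale, being a base change of $V\to Y$; $q'$ is a good moduli space by Lemma~\ref{Lem:GMS_base-change}, since \'etale morphisms are flat; $\mathcal{W}$ inherits affine diagonal from $X$; and, since classical truncation commutes with fibre products, $\mathcal{W}_\cl\simeq X_\cl\times_{Y_\cl}(U_0\git G_x)\simeq[U_0/G_x]$ by the classical Cartesian square. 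Thus the square in the statement is Cartesian with $q'$ a good moduli space, and $\Phi$ is strongly \'etale; it remains to identify $\mathcal{W}$ with a derived quotient stack $[U/G_x]$ by a derived affine scheme $U$ with $U_\cl\simeq U_0$.

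For this I would extend the classifying morphism $\tau_0\colon\mathcal{W}_\cl=[U_0/G_x]\to BG_x$ of the tautological $G_x$-torsor $P_0\colon U_0\to[U_0/G_x]$ to a morphism $\tau\colon\mathcal{W}\to BG_x$. Since $\mathcal{W}$ is nilcomplete and $BG_x$ admits a cotangent complex and is infinitesimally cohesive and convergent, it suffices to extend $\tau_0$ successively along the Postnikov tower $\mathcal{W}_\cl=t_{\le 0}\mathcal{W}\hookrightarrow t_{\le 1}\mathcal{W}\hookrightarrow\cdots$. The obstruction to extending over $t_{\le n-1}\mathcal{W}\hookrightarrow t_{\le n}\mathcal{W}$, which is a square-zero extension by $\pi_n(\oO_{\mathcal{W}})[n]$, lies in
\[
\mathrm{Ext}^{1}_{\oO_{\mathcal{W}_\cl}}\!\bigl(\tau_0^{*}\BL_{BG_x},\,\pi_n(\oO_{\mathcal{W}})[n]\bigr)\;\cong\;\HH^{n+2}\!\bigl([U_0/G_x],\,\ad(P_0)\otimes\pi_n(\oO_{\mathcal{W}})\bigr),
\]
using $\BL_{BG_x}\simeq\fg_x^{\vee}[-1]$, so that $\tau_0^{*}\BL_{BG_x}\simeq\ad(P_0)^{\vee}[-1]$ with $\ad(P_0)$ a vector bundle on $[U_0/G_x]$. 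Because $G_x$ is linearly reductive, $[U_0/G_x]$ is cohomologically affine, so this group vanishes for all $n\ge 1$ and $\tau$ exists. Then $U:=\mathcal{W}\times_{BG_x}\mathrm{pt}$ is a $G_x$-torsor over $\mathcal{W}$ with $U_\cl\simeq U_0$, whence $\mathcal{W}\simeq[U/G_x]$; moreover $U$ is a derived affine scheme because $\tau$ is an affine morphism (affineness being detected on classical truncations, and $\tau_0$ being affine as the relative spectrum over $BG_x$ of the $G_x$-equivariant algebra $\oO(U_0)$). Hence $V$ is the good moduli space $U\git G_x=\Spec\!\bigl(\oO(U)^{G_x}\bigr)$ of $[U/G_x]$, by uniqueness of good moduli spaces (Theorem~\ref{thm:universality of gms}). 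Finally, $u_0$ has residue field $\CC$, so it lifts uniquely to a point $u\colon\Spec\CC\to U$, which is $G_x$-fixed and maps to $x$ since $u_0$ is.

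The main obstacle is the third step: showing that the derived \'etale neighbourhood $\mathcal{W}$ is globally a derived quotient stack by an affine derived scheme. Everything hinges on the vanishing of the obstructions to lifting the classifying map $\tau_0$, which in turn rests on the cohomological affineness of $[U_0/G_x]$; the rest is a reduction to the classical slice theorem via topological invariance of the \'etale site and the comparison of derived and classical good moduli spaces in Theorem~\ref{Thm:GMSvsGMScl}.
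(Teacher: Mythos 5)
Your proposal is correct and follows essentially the same route as the paper: reduce to the classical slice theorem via Theorem~\ref{Thm:GMSvsGMScl}, lift the \'etale morphism $U_\cl \git G_x \to Y_\cl$ to $V \to Y$ by topological invariance, base change to get the Cartesian square with $q'$ a good moduli space by Lemma~\ref{Lem:GMS_base-change}, and finally identify $V \times_Y X$ as a quotient stack $[U/G_x]$. The only difference is that for this last identification the paper simply invokes the argument of \cite[Proposition~4.13]{HRS}, whereas you supply it explicitly via the vanishing of the obstructions to lifting the classifying map $\tau_0 \colon [U_\cl/G_x] \to BG_x$ along the Postnikov tower, which is exactly the content of that citation.
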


A second application concerns the stabilizer reduction procedure for derived Artin stacks. Given an Artin stack $X$ with affine diagonal such that $X_\cl$ admits a good moduli space, in \cite{HRS}, the authors defined a canonical derived Deligne--Mumford stack $\tilde{X} \to X$, which resolves the positive-dimensional stabilizers of $X$ via an iterated blow-up construction (see also \cite{KiemLi,KLS,Sav} for classical versions of this process). While it was shown that $\tilde{X}_\cl$ also admits a good moduli space, we now prove that the same holds for $\tilde{X}$ itself, as expected. More precisely, the good moduli space of $\tilde{X}$ should be viewed as a derived version of Kirwan's partial desingularization of GIT quotients \cite{Kirwan} and Edidin--Rydh's stabilizer reduction procedure \cite{EdidinRydh}.

\begin{theorem*}[Theorem~\ref{thm:gms of kirwan blow-up}, Corollary~\ref{corr:gms of stab red}]
Suppose that $X$ is a derived Artin stack with affine diagonal and a good moduli space $q \colon X \to Y$. Then the same holds for its stabilizer reduction $\tilde{X}$.
\end{theorem*}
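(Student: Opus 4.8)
The plan is to reduce the statement to a single blow-up step and then dispose of that step. Recall from \cite{HRS} that the stabilizer reduction is built as a finite tower of (derived) Kirwan blow-ups $\tilde X = X_n \to X_{n-1} \to \dots \to X_0 = X$, where $X_{k+1} = \Bl_{Z_k} X_k$ is the derived blow-up of $X_k$ along the prescribed closed substack $Z_k \subseteq X_k$ supported on the maximal-stabilizer-dimension locus, the tower terminating because that dimension strictly drops at each stage. Hence both conclusions --- that $\tilde X$ has affine diagonal and that $\tilde X$ admits a good moduli space --- follow by induction on $k$ once one proves the one-step statement (this is Theorem~\ref{thm:gms of kirwan blow-up}): \emph{if a derived Artin stack $X'$ has affine diagonal and a good moduli space $q'\colon X' \to Y'$, then its Kirwan blow-up $\Bl_{Z'}X'$ again has affine diagonal and admits a good moduli space.} The base case is the hypothesis on $X$, and the inductive step is precisely this one-step statement, so it suffices to treat one blow-up.

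For the affine diagonal of $W := \Bl_{Z'}X'$ I would argue formally. The derived blow-up $\pi\colon W \to X'$ is projective, in particular separated, so its relative diagonal $\delta\colon W \to W\times_{X'}W$ is a closed immersion, hence affine. The absolute diagonal of $W$ factors as $W \xrightarrow{\delta} W\times_{X'}W \to W\times W$, and the second map is the base change of $\Delta_{X'}\colon X'\to X'\times X'$ along $W\times W \to X'\times X'$, hence affine since $X'$ has affine diagonal. A composite of affine morphisms is affine, so $W$ has affine diagonal. (Blow-ups also preserve the standing qcqs and finite-presentation hypotheses, so $W$ is again of the type under consideration, and in particular the slice theorem applies to it.)

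The good moduli space is the heart of the matter, and here there are two complementary routes. The quickest uses the comparison theorem: in \cite{HRS} it is shown that $\tilde X_\cl$ admits a good moduli space --- the classical stabilizer reduction of $X_\cl$ of \cite{EdidinRydh}, a partial desingularization in the sense of \cite{Kirwan} --- so Theorem~\ref{Thm:GMSvsGMScl}(i) promotes this to a good moduli space $\tilde X \to \tilde Y$, which together with the previous paragraph already proves the theorem, provided one knows that each center $Z_k$ carries the derived structure for which the truncation of the derived Kirwan blow-up coincides with the classical one (this is designed into the construction of \cite{HRS}). For a derived-native argument --- which moreover exhibits the good moduli space of $\Bl_{Z'}X'$ as a blow-up of $Y'$ and makes the evident square Cartesian --- I would instead work \'etale-locally on $Y'$: by the derived slice theorem (Theorem~\ref{thm:derived slice thm}), $Y'$ has an \'etale cover $\{V_\alpha \to Y'\}$ along which $X'$ becomes $[U_\alpha/G_\alpha]$ with $U_\alpha$ derived affine, $G_\alpha$ reductive, $V_\alpha = U_\alpha\git G_\alpha$, and $Z'$ becomes $[Z_\alpha/G_\alpha]$; since $[U_\alpha/G_\alpha]\to X'$ is strongly \'etale (hence flat), derived blow-up commutes with this base change, so $\Bl_{Z'}X'\times_{Y'}V_\alpha \simeq [\Bl_{Z_\alpha}U_\alpha/G_\alpha]$ with $\Bl_{Z_\alpha}U_\alpha$ a quasi-projective derived scheme over the affine $V_\alpha$ carrying the $G_\alpha$-linearized relatively ample line bundle dual to the exceptional divisor. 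One then checks that the associated GIT quotient $[\Bl_{Z_\alpha}U_\alpha/G_\alpha]\to \Bl_{Z_\alpha}U_\alpha\git G_\alpha$ is a good moduli space and that, because $Z_\alpha$ is the maximal-stabilizer locus over an affine, the semistable locus of $\Bl_{Z_\alpha}U_\alpha$ is everything, so nothing is discarded; finally, base change of good moduli spaces along the \'etale maps $V_\alpha\times_{Y'}V_\beta \to V_\alpha$ (Lemma~\ref{Lem:GMS_base-change}) together with the gluing statement for good moduli spaces assemble the quotients $\Bl_{Z_\alpha}U_\alpha\git G_\alpha$ into a single derived algebraic space receiving a good moduli space map from $\Bl_{Z'}X'$. (One can also hybridize: set things up locally as above but invoke the classical result of \cite{EdidinRydh} together with Theorem~\ref{Thm:GMSvsGMScl}(i) on each chart, then glue.)

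The main obstacle is exactly this good-moduli-space step; the affine-diagonal part and the inductive packaging are routine. In the first route the care needed is in matching the truncation of the derived Kirwan blow-up with the classical one, which is a matter of the derived structure imposed on the centers in \cite{HRS}. In the derived-native route the crux is the local GIT computation showing that no semistable locus is lost --- this is where the special choice of center (the maximal-stabilizer locus, and being over an affine) is essential --- followed by the descent and gluing bookkeeping required to globalize the construction functorially, so that the resulting square over $Y'$ is Cartesian.
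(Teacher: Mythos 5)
Your ``derived-native'' route computes the good moduli space of the wrong stack. The Kirwan blow-up in this paper (and in \cite{HRS}, which you need for termination of the tower) is the \emph{saturated} blow-up $\hat X=\Proj^q_X\rR_{X^{\max}/X}$, i.e.\ the largest open substack of the full derived blow-up on which the rational map to $\Proj_X q^*q_*\rR_{X^{\max}/X}$ is defined --- not the full blow-up $\Bl_{Z'}X'$ you work with throughout. The saturation genuinely discards a nonempty unstable locus in the exceptional divisor in general: for $X=[\AA^2/\GG_m]$ with weights $\pm1$ and $X^{\max}=B\GG_m$ supported at the origin, the two $\GG_m$-fixed points of the exceptional $\PP^1$ still have one-dimensional stabilizers and are exactly what the saturation removes; if ``nothing were discarded'' the maximal stabilizer dimension would not drop and the induction would never terminate. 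So your local GIT claim that the semistable locus of $\Bl_{Z_\alpha}U_\alpha$ is everything is false, and the gluing argument built on it does not produce the good moduli space of $\hat X$.

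For comparison, the paper's proof uses neither the slice theorem nor GIT: Theorem~\ref{thm:gms of sat proj} shows directly that for any graded $A$ with $\pi_0A$ finitely generated, $\Proj^q_XA\to\Proj_Yq_*A$ is a good moduli space, by working Zariski-locally on $Y$, covering $\Proj^q_XA$ by the affine charts $\Spec_XA_{(\epsilon(q^*f))}$ indexed by homogeneous $f\in\pi_0(q_*A)$ of positive degree, and reducing to the relative-Spec statement of Lemma~\ref{lem:gms_prop}(iii); applying this to $A=\rR_{X^{\max}/X}$ gives Theorem~\ref{thm:gms of kirwan blow-up} together with the identification $\hat Y=\Proj_Yq_*\rR_{X^{\max}/X}$, which becomes a blow-up of $Y$ via Proposition~\ref{prop: push of Rees of sat} when the center is saturated. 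Your first route --- quoting from \cite{HRS} that $\tilde X_\cl$ admits a classical good moduli space and invoking Theorem~\ref{Thm:GMSvsGMScl}(i), after upgrading via Proposition~\ref{Prop:comparison between Alper and derived GMS} --- is a legitimate shortcut for bare existence, but it yields no description of $\hat Y$ or $\tilde Y$, which is the substantive content of the theorem as a derived partial desingularization statement.
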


Finally, we also construct natural derived enhancements of fundamental examples of classical good moduli spaces, such as moduli spaces of stable maps and sheaves, and give a concrete description of the good moduli space of affine quotient stacks $[\Spec A / G]$ for a linearly reductive group $G$.

\subsection*{Future directions} A natural extension of the present paper is to prove an \'{e}tale slice theorem for good moduli spaces (see~\cite[Theorem~6.1]{AHR2} and \cite[Theorem~4.12]{AlperLuna}) and a partial desingularization result for stacks over \emph{any} base scheme $S$ and in particular over mixed characteristic. Another extension is to prove coherent completeness for Noetherian derived Artin stacks admitting a good moduli space \cite[Thm.~1.6]{AHR2}.

A natural direction for future inquiry is to obtain a theory of derived adequate moduli spaces \cite{AlperAdequate}, which will cover the case of Artin stacks over characteristic $p>0$, where the notion of a linearly reductive group is more restrictive and differs from being reductive. Giving the correct condition, replacing universally of cohomological dimension $0$, looks like a subtler task, which we hope will be within reach in the near future.

It will also be interesting to exhibit explicit, interesting, non-trivial examples of derived good moduli spaces which enjoy nice properties, such as being quasi-smooth, and develop the theory further to apply to higher stacks.

Finally, we intend to establish a general derived Luna's fundamental lemma for the descent of properties of morphisms (including and going beyond \'{e}taleness) of algebraic stacks to their good moduli spaces generalizing \cite{LunaRydh}.

\subsection*{Layout of the paper} In \S\ref{sec:2}, we gather the necessary results and properties regarding morphisms which are universally of cohomological dimension $0$ and related notions. In \S\ref{sec:3}, we define good moduli spaces and prove our main results and several properties. \S\ref{sec:4} is concerned with applications and examples, including an \'{e}tale slice theorem, stabilizer reduction, an existence criterion and quotient stacks. Finally, we recall the basics of projective spectra and blow-ups in derived geometry in two appendices.

\subsection*{Acknowledgements} MS would like to thank Jarod Alper, Daniel Halpern-Leistner and David Rydh for discussions related to good moduli spaces and the \'{e}tale slice theorem. EA and JH would like to thank David Rydh for inspiring conversations on good moduli spaces and their applications. EA would also like to thank Ben Davison and Sebastian Schlegel Mejia for useful conversations. Finally, we are collectively indebted to David Rydh for carefully going through an earlier draft and providing numerous comments, corrections and valuable suggestions that resulted in an overall great improvement of this paper.

\subsection*{Notation and conventions}
Throughout, everything is derived and $\infty$-categorical. Hence a stack is a space-valued sheaf $\Aff^\op \to \Space$ on the category of (derived) affine schemes. While the results of this paper preceding Section~\ref{sec:4} hold over a base derived algebraic space $S$, we typically work over $\Spec \ZZ$ for simplicity, as this does not affect the essence of our results. Indeed, the universality of good moduli spaces for maps to derived algebraic spaces implies that the relative case reduces to the absolute case.

An \emph{Artin stack} is a 1-algebraic stack $X$. If moreover $X_\cl$ is a classical algebraic space, then $X$ is an \emph{algebraic space}.

For any stack $X$, the \emph{category of quasi-coherent modules} $\QCoh(X)$ is the limit of the categories $\Mod_A$, indexed over all $\Spec A \to X$. The category $\QCoh(X)$ is endowed with a canonical $t$-structure, where $M \in \QCoh(X)$ is connective if and only if $M_B \coloneqq x^*M$ is connective for all $x \colon \Spec B \to X$.

A morphism $f \colon X \to Y$ of Artin stacks is called \emph{qcqs} if $f_\cl$ is quasi-compact and quasi-separated. 

Recall that an exact functor between stable categories is \emph{right} $t$-exact if it preserves connective objects, and that the dual notion is \emph{left} $t$-exact. We use homological indexing notation unless otherwise stated.

In a stable category, (co)fiber sequences are also called \emph{exact} sequences.

A morphism of algebras, modules, ..., is \emph{surjective} if the fiber is connective. 

For an Artin stack $X$, we write $\lvert X \rvert \coloneqq \lvert X_\cl \rvert$ for the topological space of points of $X$.

We write $\Map(-,-)$ for mapping spaces.

\section{Connectivity and pushforward} \label{sec:2}

The purpose of this section is to gather and develop the requisite background and results for a well-behaved pushforward functor in the context of derived algebraic geometry. This will be necessary towards a robust definition of good moduli spaces later on, which will rely on the properties of pushing forward. The appropriate notion of morphisms that we will consider is being universally of cohomological dimension zero, following \cite{HalpernleistnerMapping}. We will see that this guarantees the desired properties and bears close relation to affineness and $t$-exactness.
\medskip

Let $f \colon X \to Y$ be a morphism of Artin stacks. Recall that we have an adjunction
\[ (f^* \dashv f_*) \colon \QCoh(Y) \rightleftarrows \QCoh(X) \]
such that $f^*$ is right $t$-exact and $f_*$ is left $t$-exact.

For $y \colon \Spec A \to Y$, we write $f_A \colon X_A \to \Spec A$ for the pullback of $f$ along $y$.

\subsection{Different notions of affineness} We briefly discuss different notions of affineness and their relations.
\begin{remark}
    Let $f \colon X \to Y$ be a morphism of Artin stacks. Consider the functors
    \begin{align*}
        f_*^{\heartsuit} &\colon \QCoh(X)^{\heartsuit} \xrightarrow{f_*} \QCoh(Y)_{\leq 0} \xrightarrow{\tau_{\geq 0}} \QCoh(Y)^{\heartsuit} \\
        f^*_\heartsuit & \colon \QCoh(Y)^{\heartsuit} \xrightarrow{f^*} \QCoh(X)_{\geq 0} \xrightarrow{\tau_{\leq 0}} \QCoh(X)^{\heartsuit}.
    \end{align*}
    Using the adjoint properties of $\tau_{\geq 0}$ and $\tau_{\leq 0}$, one shows that $f^*_\heartsuit \dashv f_*^{\heartsuit}$. If $X,Y$ are classical and $f$ is qcqs, then these functors agree with the underived pushforward and pullback functors \cite[Lem.~1.2]{HallRydhPerfect}.
\end{remark}

\begin{definition}
    A morphism $f \colon X \to Y$ of Artin stacks is called
    \begin{itemize}
        \item \textit{cohomologically affine} if $f_*^{\heartsuit} \colon \QCoh(X)^\heartsuit \to \QCoh(Y)^\heartsuit$ is exact,
        \item \textit{universally cohomologically affine} if $(f')_*^{\heartsuit} \colon \QCoh(X')^\heartsuit \to \QCoh(Y')^\heartsuit$ is exact for any Cartesian square of Artin stacks
        \begin{align*}
            \xymatrix{
            X' \ar[d]^-{f'} \ar[r] & X \ar[d]^-{f} \\
            Y' \ar[r] & Y,
            }
        \end{align*}
        \item \textit{affine} if for all morphisms $y \colon \Spec A \to Y$, the pullback $X_A$ is an affine scheme, and
        \item \emph{quasi-affine} if it factorizes as a quasi-compact open immersion $X \to \overline{X}$ followed by an affine morphism $\overline{X} \to Y$. 
    \end{itemize}
    If $X \to *$ is cohomologically affine (quasi-affine), then $X$ is called \emph{cohomologically affine} (\emph{quasi-affine}).
\end{definition}

It is clear that affineness is stronger than cohomological affineness and quasi-affineness. 

\begin{remark}
\label{Rem:coh_aff_cl}
    Recall that pushing forward along $Z_\cl \to Z$ induces an equivalence $\QCoh(Z_\cl)^\heartsuit \simeq \QCoh(Z)^{\heartsuit}$, for any Artin stack $Z$. It follows that $f \colon X \to Y$ is cohomologically affine if and only if $f_\cl$ is.
\end{remark}

\begin{remark}
\label{Rem:noncon_push}
Recall that a qcqs morphism $f \colon X \to Y$ of classical algebraic stacks is quasi-affine if and only if the canonical map $X \to \Spec_Y(f^{\heartsuit}_*\oO_X)$ is a quasi-compact open immersion. To get a similar result in the derived setting, we need to deal with the fact that $f_*\oO_X$ might be nonconnective. This can be done via nonconnective derived geometry (as developed in \cite{BenBassatHekkingBlowups}, based on the theory of derived rings by Bhatt--Mathew \cite{RaksitHKR}). We sketch the argument.

Write $\Alg^{\nc}$ for the category of nonconnective algebras (i.e., derived rings in $\Mod_\ZZ$). Then the inclusion $\iota \colon \Alg \to \Alg^{\nc}$ from algebras into nonconnective algebras has a right adjoint $\tau_{\geq 0}(-)$, which is the truncation on underlying modules. Put $\Aff \coloneqq \Alg^{\op}$ and $\Aff^\nc \coloneqq (\Alg^{\nc})^{\op}$.

The inclusion $i \colon \Stk \to \Stk^{\nc}$ from stacks to nonconnective stacks has a left adjoint $t \colon \Stk^{\nc} \to \Stk$, such that $t \dashv i$ restricts to the adjunction
\[ (t \dashv i) \colon \Aff^\nc \rightleftarrows \Aff  \]
induced by $\iota \dashv \tau_{\geq 0}$.

Yoneda induces a contravariant inclusion $\Spec^\nc(-) \colon \Alg^\nc \to \Stk^\nc$. Define $\QAlg^\nc(-) \colon \Stk^\nc \to \Cat$ by right Kan extension of $\Alg^\nc_{(-)} \colon \Aff^\nc \to \Cat$. For $S \in \Stk$, we get an inclusion $\Spec^\nc_S(-) \colon \QAlg^\nc(S) \to \Stk^\nc_{/S}$. For $T \in \Stk$, write $\QAlg(iT)$ for the full subcategory of $\QAlg^\nc(iT)$ spanned by connective $\oO_{iT}$-algebras. Then $\QAlg(i(-))$ is also the right Kan extension of $\Alg_{(-)} \colon \Aff \to \Cat$, hence coincides with the existing definition in the connective setting.

Let $f \colon X \to Y$ be a morphism of stacks. The truncation functor induces an adjunction
\[ (\iota \dashv \tau_{\geq 0}) \colon \QAlg(iY) \rightleftarrows \QAlg^\nc(iY) \]
which commutes with pullbacks to affine schemes. It follows that the functor $f^* \colon \QAlg(Y) \to \QAlg(X)$ has a right adjoint
\[ \QAlg(iX) \xrightarrow{\iota} \QAlg^\nc(iX) \xrightarrow{(if)_*} \QAlg^\nc(iY) \xrightarrow{\tau_{\geq 0}(-)} \QAlg(iY) \]
after identifying $\QAlg(iX) \simeq \QAlg(X)$ and $\QAlg(iY) \simeq \QAlg(Y)$. We write this adjoint as $\tau_{\geq 0}f_*(-)$.

One can now show that $\Spec_Y(\tau_{\geq 0}f_*B) \simeq t \Spec^\nc_{iY}((if)_*B)$, by reducing to the case where $Y$ is affine, and then using that $t$ restricts to the functor $\Aff^\nc \to \Aff$ induced by $\tau_{\geq 0}$. Hence, upon applying $t$ to the factorization
\[ iX \to \Spec^\nc_{iY}((if)_*\oO_X) \to iY \]
of $if$, we obtain the factorization
\[ X \to \Spec_Y(\tau_{\geq 0}f_*\oO_X) \to Y \]
of $f$.
\end{remark}

\begin{lemma}
\label{Lem:qaffine_bc}
    Let $f \colon X \to Y$ be a morphism of Artin stacks. Then the following are equivalent.
    \begin{enumerate}
        \item $f$ is quasi-affine.
        \item $X_A$ is quasi-affine, for all $\Spec A \to Y$.
        \item $X \to \Spec_Y \tau_{\geq 0} f_* \oO_X$ is a quasi-compact open immersion.
        \item $f_\cl$ is quasi-affine.
    \end{enumerate}
\end{lemma}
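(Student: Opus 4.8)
The plan is to prove the cycle (iii) $\Rightarrow$ (i) $\Rightarrow$ (ii) $\Rightarrow$ (iv) $\Rightarrow$ (iii); the first three implications are formal, and essentially all the content lies in (iv) $\Rightarrow$ (iii). For (iii) $\Rightarrow$ (i): the morphism $\Spec_Y \tau_{\geq 0} f_* \oO_X \to Y$ is affine by construction, so a quasi-compact open immersion of $X$ into it exhibits $f$ as quasi-affine. For (i) $\Rightarrow$ (ii): writing $f = \bar f \circ j$ with $j$ a quasi-compact open immersion and $\bar f$ affine, base change along $\Spec A \to Y$ preserves all three properties, so $X_A \hookrightarrow \bar X_A$ is a quasi-compact open immersion into an affine scheme. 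For (ii) $\Rightarrow$ (iv): for any $\Spec B \to Y_\cl$ with $B$ classical, composing with $Y_\cl \to Y$ gives $(X_B)_\cl = X_\cl \times_{Y_\cl} \Spec B$ (classical truncation commutes with fiber products), which is the classical truncation of the quasi-affine stack $X_B$ and hence quasi-affine, since $(-)_\cl$ takes a factorization ``quasi-compact open immersion, then affine'' to one of the same type; running over a smooth atlas of $Y_\cl$ and using smooth descent of quasi-affineness yields that $f_\cl$ is quasi-affine.

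For (iv) $\Rightarrow$ (iii) I would invoke the nonconnective geometry of Remark~\ref{Rem:noncon_push}. Since both the hypothesis and the conclusion are smooth-local on $Y$, and the formation of $\Spec_Y \tau_{\geq 0} f_* \oO_X$ commutes with affine base change on $Y$ (the adjunction $\iota \dashv \tau_{\geq 0}$ on quasi-coherent algebras commutes with pullback to affines), we may assume $Y = \Spec A$ is affine. By Remark~\ref{Rem:noncon_push}, the factorization $X \to \Spec_Y \tau_{\geq 0} f_* \oO_X \to Y$ is obtained by applying $t$ to the nonconnective factorization $iX \to \Spec^\nc_{iY}\bigl((if)_* \oO_X\bigr) \to iY$ of $if$. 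It therefore suffices to show (a) that $\phi \colon iX \to \Spec^\nc_{iY}\bigl((if)_* \oO_X\bigr)$ is a quasi-compact open immersion of nonconnective stacks, and (b) that $t$ preserves quasi-compact open immersions; (b) holds because $t$ leaves the underlying classical stack unchanged and commutes, after $\tau_{\geq 0}$, with the localizations $R \mapsto R[g^{-1}]$ that generate open immersions, while open immersions of (nonconnective) stacks are determined by the corresponding open of the classical truncation.

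For (a) I would use the nonconnective analogue (established in \cite{BenBassatHekkingBlowups}) of the classical criterion recalled in Remark~\ref{Rem:noncon_push}: quasi-affineness of a qcqs morphism $g$ of nonconnective stacks is detected on $g_\cl$, and for such $g$ the canonical map $W \to \Spec^\nc_Z(g_*\oO_W)$ is a quasi-compact open immersion. Applied to $g = if$, whose classical truncation is $f_\cl$ --- quasi-affine by (iv) --- this gives (a). Concretely, one can also compute $\phi_\cl$ directly: the classical truncation of $\Spec^\nc_{iY}((if)_*\oO_X)$ is $\Spec_{Y_\cl}(\pi_0 (if)_*\oO_X)$, and $\pi_0 (if)_*\oO_X = f_*^\heartsuit\oO_X$ corresponds to $(f_\cl)_*^\heartsuit\oO_{X_\cl}$ under the equivalence of hearts of Remark~\ref{Rem:coh_aff_cl}, so $\phi_\cl$ is the classical canonical map $X_\cl \to \Spec_{Y_\cl}((f_\cl)_*^\heartsuit\oO_{X_\cl})$, a quasi-compact open immersion exactly because $f_\cl$ is quasi-affine. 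Combining (a), (b) and applying $t$ gives (iii).

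The main obstacle is step (a): one needs that quasi-affineness of morphisms of nonconnective stacks is insensitive to derived structure, so that the classical quasi-affineness criterion bootstraps through nonconnective geometry --- this is precisely the input for which Remark~\ref{Rem:noncon_push} appeals to \cite{BenBassatHekkingBlowups}. The remaining points --- that $t$ preserves quasi-compact open immersions and sends the nonconnective factorization of $if$ to the stated factorization of $f$ --- are routine within that framework.
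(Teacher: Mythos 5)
Your cycle (iii)$\Rightarrow$(i)$\Rightarrow$(ii)$\Rightarrow$(iv) is fine and essentially matches the paper, but the implication (iv)$\Rightarrow$(iii), which carries all the content, has a genuine gap. Step (a) invokes a ``nonconnective analogue of the classical quasi-affineness criterion'' --- that for a qcqs morphism $g$ of nonconnective stacks, quasi-affineness is detected on $g_\cl$ and $W \to \Spec^\nc_Z(g_*\oO_W)$ is a quasi-compact open immersion --- and attributes it to \cite{BenBassatHekkingBlowups} via Remark~\ref{Rem:noncon_push}. But that remark cites the reference only for the formalism of nonconnective algebras and the adjunction $t \dashv i$; the criterion it recalls is the \emph{classical} one, for classical stacks. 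A derived (let alone nonconnective) version of that criterion is precisely what Lemma~\ref{Lem:qaffine_bc} is proving, so step (a) begs the question. Your fallback computation does not repair this: the identification $\pi_0\bigl((if)_*\oO_X\bigr) \simeq (f_\cl)_*^{\heartsuit}\oO_{X_\cl}$ is false in general, because the descent spectral sequence feeds $H^s(X_\cl,\pi_s\oO_X)$ for $s\geq 1$ into $\pi_0 f_*\oO_X$, and quasi-affine non-affine truncations have abundant higher cohomology (e.g.\ $H^1(\AA^2\setminus\{0\},\oO)\neq 0$), so any derived structure with $\pi_1\oO_X\neq 0$ there breaks the claim that $\phi_\cl$ is the classical canonical map. (The equivalence of hearts in Remark~\ref{Rem:coh_aff_cl} identifies $f_*^{\heartsuit}$ applied to $\pi_0\oO_X$ with the classical pushforward; it says nothing about $\pi_0$ of the derived pushforward of $\oO_X$ itself.)

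The paper avoids both issues by a direct local argument. For (ii)$\Rightarrow$(iii) it reduces to $Y$ affine, uses the hypothesis to get an honest quasi-compact open immersion $g\colon X \to \Spec R$ into an affine \emph{derived} scheme, covers the image by principal opens $\Spec R_s$, and checks $j^*\tau_{\geq 0}g_*\oO_X \simeq R_s$ --- this works because over its open image $g_*\oO_X$ restricts to $\oO_X$, so no identification of $\pi_0$ of the pushforward with a classical pushforward is ever needed. For (iv)$\Rightarrow$(i) the decisive extra input is topological invariance of affineness (\cite[Cor.~1.1.6.3]{LurieSpectral}): the pullbacks $X_s$ of $X$ over principal opens of $\Spec\tau_{\geq 0}\Gamma(X,\oO_X)$ have affine classical truncations, hence are affine. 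Your proposal needs some substitute for this bootstrapping step; as written, the nonconnective detour does not supply one.
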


\begin{proof}
    That (i) implies (ii)  is clear since open immersions and affine morphisms are stable under base-change. Also (iii) implies (i) by definition.
    
    Now assume (ii). Then $f$ is quasi-compact schematic, so the construction $X \to \Spec_Y \tau_{\geq 0} f_* \oO_X$ is stable under base-change on $Y$ by \cite[Ch.~2, Prop.~2.2.2]{GaitsgoryStudy}. To show (iii), we may thus assume that $Y$ is affine, hence that $X$ is quasi-affine. Take a quasi-compact open immersion $g \colon X \to \Spec R$, and put $B \coloneqq \tau_{\geq 0}g_*\oO_X$. By Remark \ref{Rem:noncon_push}, we obtain a factorization
    \[ X \to \Spec B \to \Spec R \]
    of $f$. Let $U \to \Spec R$ be the open subscheme such that $\lvert U \rvert$ is the image of $\lvert X \rvert \to \lvert \Spec R \rvert$. Since $X$ is quasi-compact, $U$ is covered by a finite number of schemes of the form $\Spec R_s$, with $s \in R$. For any such $s$, consider the Cartesian diagram
    \begin{center}
        \begin{tikzcd}
            X_s \arrow[r] \arrow[d, "h'"] \arrow[dd, bend right=60, swap, "g'"] & X \arrow[d, "h"] \arrow[dd, bend left=60, "g"] \\
            \Spec j^*B \arrow[r] \arrow[d] & \Spec B \arrow[d] \\
            \Spec R_s \arrow[r, "j"] & \Spec R
        \end{tikzcd}
    \end{center}
    Since $f$ is an open immersion and $j$ lands in the image of $g$, the morphism $g'$ is equivalent to $\Spec R_s \to \Spec B_s \to \Spec R_s$. On the other hand, again by \cite[Ch.~2, Prop.~2.2.2]{GaitsgoryStudy} and by Remark \ref{Rem:noncon_push}, we have
    \[ j^*B \simeq \tau_{\geq 0} j^*g_* \oO_X \simeq \tau_{\geq 0} f'_*\oO_{X_s} \simeq R_s. \]
    It follows that $h'$ is an equivalence, hence that $h$ is a quasi-compact open immersion. 
    
    Clearly (i) implies (iv) so it remains to show that (iv) implies (i). By the preceding argument we again reduce to the case where $X$ is a quasi-affine scheme. Assume $f_\cl$ is quasi-affine, and put $B \coloneqq \tau_{\geq 0}\Gamma(X,\oO_X)$. Let $X'$ be the open subscheme of $\Spec B$ determined by the open map $\lvert X \rvert \to \lvert \Spec B \rvert$. Since $X$ is quasi-compact, $X'$ is covered by a finite number of schemes of the form $\Spec B_s$. For any such $s$, we obtain a Cartesian diagram
    \begin{center}
        \begin{tikzcd}
            X_s \arrow[d, "\alpha"] \arrow[r] & X \arrow[d] \\
            X'_s \arrow[d, "\beta"] \arrow[r] & X' \arrow[d] \\
            \Spec B_s \arrow[r] & \Spec B
        \end{tikzcd}
    \end{center}
    such that $\beta$ is an equivalence. With a similar argument as before, it holds that $B_s \simeq \tau_{\geq 0}\Gamma(X_s,\oO_{X_s})$. It thus suffices to show that $X_s$ is affine. Since $(X_s)_\cl$ is affine, this follows from \cite[Cor.~1.1.6.3]{LurieSpectral} (or from topological invariance).
\end{proof}

Let $f \colon X \to Y$ be a morphism of Artin stacks. Then $f$ is \emph{locally quasi-finite} if $f_\cl$ is locally quasi-finite. By \cite[\href{https://stacks.math.columbia.edu/tag/0397}{Tag 0397}]{stacks-project}, this definition coincides with the one given in \cite[Def.~3.3.1.1]{LurieSpectral} in case $X,Y$ are algebraic spaces. Recall that $f$ is \emph{separated} if $f_\cl$ is.

\begin{corollary}
\label{Cor:Zar_main_thm}
    A representable morphism of Artin stacks which is quasi-compact, separated and locally quasi-finite is quasi-affine.
\end{corollary}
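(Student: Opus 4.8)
The plan is to reduce the statement, via Lemma~\ref{Lem:qaffine_bc}, to the classical Zariski main theorem for algebraic spaces. Write $f \colon X \to Y$ for the given morphism. By Lemma~\ref{Lem:qaffine_bc} (equivalence of (i) and (ii)), $f$ is quasi-affine if and only if $X_A$ is quasi-affine for every morphism $\Spec A \to Y$, so I would fix such a morphism and pass to the base change $f_A \colon X_A \to \Spec A$. Since representability, quasi-compactness, separatedness and local quasi-finiteness are all stable under base change, $f_A$ again enjoys all of these properties; in particular $X_A$ is a derived algebraic space lying over the derived affine scheme $\Spec A$.

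Next I would apply Lemma~\ref{Lem:qaffine_bc} a second time, now to $f_A$ (equivalence of (i) and (iv)): it suffices to show that the classical truncation $(f_A)_\cl \colon (X_A)_\cl \to \Spec \pi_0(A)$ is quasi-affine. Here $(X_A)_\cl$ is a \emph{classical} algebraic space, and $(f_A)_\cl$ is obtained from $f_\cl$ by base change along $\Spec \pi_0(A) \to Y_\cl$, hence is quasi-compact, separated and locally quasi-finite; by the discussion immediately following Lemma~\ref{Lem:qaffine_bc} the notion of local quasi-finiteness occurring here is the standard one, so the classical theorem applies. Invoking the classical Zariski main theorem for algebraic spaces (see for instance \cite{stacks-project} or \cite{LurieSpectral}), we obtain a factorization $(X_A)_\cl \to T \to \Spec \pi_0(A)$ with the first map an open immersion and the second finite. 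As $\Spec \pi_0(A)$ is affine, $T$ is affine; and as $(X_A)_\cl$ is quasi-compact (being quasi-compact over an affine scheme), the open immersion is quasi-compact. Hence $(X_A)_\cl$ is quasi-affine, which is exactly what we needed.

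The genuine content is entirely the classical Zariski main theorem; everything else is the formal transfer between the derived and classical pictures already provided by Lemma~\ref{Lem:qaffine_bc}. Accordingly, the only point requiring care — and the place where I expect the (very mild) difficulty to lie — is verifying that the hypotheses imposed on $f$ descend to $(f_A)_\cl \to \Spec \pi_0(A)$ in precisely the form demanded by the classical statement; this is immediate from the fact that each of representability, quasi-compactness, separatedness and local quasi-finiteness is either imposed on, or detected by, classical truncations, as recorded in the excerpt.
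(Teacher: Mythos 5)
Your proof is correct and follows essentially the same route as the paper: reduce via Lemma~\ref{Lem:qaffine_bc} to the classical case of algebraic spaces over an affine base and invoke Zariski's main theorem (the paper cites \cite[\href{https://stacks.math.columbia.edu/tag/082J}{Tag 082J}]{stacks-project} directly rather than unwinding the open-immersion-into-finite factorization, but the content is identical). The only difference is cosmetic: you base change first and truncate second, while the paper phrases the reduction in one step; both rest on the same two equivalences in Lemma~\ref{Lem:qaffine_bc} and on the fact that all the hypotheses are, by definition, conditions on classical truncations.
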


\begin{proof}
    By Lemma \ref{Lem:qaffine_bc} this reduces to the case of classical algebraic spaces, which follows from Zariski's main theorem \cite[\href{https://stacks.math.columbia.edu/tag/082J}{Tag 082J}]{stacks-project}.
\end{proof}

\begin{example}
    \label{Ex:algsp_quasiaff}
    The diagonal $\Delta$ of a quasi-separated algebraic space is quasi-affine. Indeed, by Corollary \ref{Cor:Zar_main_thm}, this follows from \cite[\href{https://stacks.math.columbia.edu/tag/02X4}{Tag 02X4}]{stacks-project}.     
\end{example}

\subsection{Morphisms universally of cohomological dimension zero} We now introduce the central definition of this section.

\begin{definition}
    A morphism $f \colon X \to Y$ of Artin stacks is \emph{of cohomological dimension zero} (\cd for short) if for all $M \in \QCoh(X)^{\heartsuit}$ it holds that $f_\ast M$ is connective (and hence discrete).
    
    It is \emph{universally of cohomological dimension zero} (\ucd for short) if for all morphisms $\Spec A \to Y$ and all $M \in \QCoh(X_A)^{\heartsuit}$ it holds that $(f_A)_*M$ is connective.
\end{definition}

\begin{remark}
\label{Rem:coh_dim_cl}
    By the same reasoning as in Remark \ref{Rem:coh_aff_cl}, it holds that $f$ is \cd if and only if $f_\cl$ is. Hence the same holds true for being \ucd.
\end{remark}

Recall that a morphism $f \colon X \to Y$ of stacks  satisfies the \emph{base-change formula} if for every Cartesian square
of stacks
	\begin{equation}
		\label{E:base-change-square}
		\begin{tikzcd}
			X' \arrow[d, "f'"] \arrow[r, "g'"] & X \arrow[d, "f"] \\
			Y' \arrow[r, "g"] & Y, 
		\end{tikzcd}
	\end{equation}
 the natural transformation $g^*f_* \to f_*' (g')^*$ is an equivalence. Related to this, we say that $f$ satisfies the \emph{projection formula} if, for all $M \in \QCoh(X)$ and $N \in \QCoh(Y)$, the natural map $(f_*M) \otimes_{\oO_Y} N \to f_*(M\otimes_{\oO_X} f^*N)$ is an equivalence.

\begin{lemma}
\label{Lem:ucd0_props}
    Being \ucd is stable under composition and base-change. If $f$ is qcqs and \ucd, then $f_*$ preserves filtered colimits and satisfies the base-change and projection formulas.
\end{lemma}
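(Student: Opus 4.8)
The plan is to treat the three assertions in turn, bootstrapping from the definition of \ucd.

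\emph{Stability under base-change} is immediate. Given a Cartesian square with $g\colon Y'\to Y$ along the bottom and $f'\colon X'\to Y'$ along the left, any $y'\colon\Spec A\to Y'$ composes to $y=g\circ y'\colon\Spec A\to Y$, and by pasting of Cartesian squares the base change of $f'$ along $y'$ agrees, as a morphism over $\Spec A$, with the base change $f_A$ of $f$ along $y$. So the defining connectivity property for $f'$ is a special case of that for $f$.

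\emph{From \ucd to \cd, and stability under composition.} First, every \ucd morphism $h\colon X\to Y$ is \cd: for $M\in\QCoh(X)^\heartsuit$, connectivity of $h_*M$ may be checked after pulling back along a smooth atlas $\{p_i\colon\Spec A_i\to Y\}$, by flat descent of connectivity; and flat base change (valid for arbitrary morphisms of Artin stacks along flat maps of the target) identifies $p_i^*h_*M\simeq(h_{A_i})_*(p_i')^*M$, where $(p_i')^*M$ lies in the heart since flat pullback is $t$-exact, and $(h_{A_i})_*(p_i')^*M$ is connective by the defining property of \ucd. Now let $f\colon X\to Y$ and $g\colon Y\to Z$ be \ucd and fix $z\colon\Spec A\to Z$; writing $Y_A=Y\times_Z\Spec A$ and $X_A=X\times_Z\Spec A$, the base change $(gf)_A$ factors as $X_A\xrightarrow{f'}Y_A\xrightarrow{g_A}\Spec A$ with $f'$ and $g_A$ the base changes of $f$ and $g$, hence \ucd and in particular \cd. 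Thus $f'_*M$ is connective for every $M\in\QCoh(X_A)^\heartsuit$, and being also coconnective (left $t$-exactness of $f'_*$) it lies in $\QCoh(Y_A)^\heartsuit$; applying \cd-ness of $g_A$ then gives that $((gf)_A)_*M\simeq(g_A)_*f'_*M$ is connective. As $z$ and $M$ were arbitrary, $gf$ is \ucd.

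\emph{The qcqs case.} Suppose now $f$ is qcqs and \ucd. By the previous step $f$ is \cd, hence of cohomological dimension $0$. For a qcqs morphism of finite cohomological dimension it is standard that $f_*$ preserves filtered colimits and satisfies the base-change formula: the argument reduces to $Y=\Spec A$ affine (by flat base change both properties are smooth-local on $Y$), then realizes $f_*$ as the totalization of the pushforwards $(g_n)_*\colon\QCoh(V_n)\to\Mod_A$ for the \v{C}ech nerve $V_\bullet$ of a smooth affine atlas of the qcqs stack $X$ (each $V_n$ a qcqs algebraic space), and uses the uniform cohomological amplitude bound to see that this totalization inherits the two properties from the $(g_n)_*$. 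I would invoke this from \cite{HalpernleistnerMapping} (cf.\ \cite{LurieSpectral,GaitsgoryStudy}) rather than reprove it. Finally the projection formula reduces, via the base-change formula just established, to the case $Y=\Spec A$ affine, where both functors $N\mapsto(f_*M)\otimes_A N$ and $N\mapsto f_*(M\otimes_{\oO_X}f^*N)$ preserve all colimits (for the second, $f^*$ and $\otimes$ preserve colimits and $f_*$ does so by what precedes together with exactness) and the natural comparison map is an equivalence on the generator $N=A$, hence on all of $\Mod_A$.

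\emph{Main obstacle.} The one genuinely non-formal ingredient is the cited fact, used in the last paragraph, that a qcqs morphism of finite cohomological dimension has a pushforward preserving filtered colimits and satisfying base change; everything else is formal bookkeeping with $t$-exactness, flat base change and flat descent of connectivity.
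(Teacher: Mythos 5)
Your proposal is correct, but it is organized quite differently from the paper's proof, which consists of a single citation: the entire lemma is outsourced to \cite[Prop.~A.1.5, Lem.~A.1.6]{HalpernleistnerMapping}. What you have done, in effect, is reconstruct the content of that appendix: base-change stability by pasting of Cartesian squares; the implication \ucd $\Rightarrow$ \cd by flat descent of connectivity (this is exactly the paper's Lemma~\ref{lem:fpqc_connective}) combined with flat base change for heart objects (the paper's standing reference for this is \cite[Lem.~A.1.3]{HalpernleistnerMapping}, and you are right to restrict it to objects of the heart, since unrestricted flat base change is not available); and closure under composition by noting that $f'_*M$ lands in the heart because $f'_*$ is both left $t$-exact and, by \ucd, right $t$-exact on the heart. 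All of these steps are sound, and the only genuinely non-formal input --- that a qcqs morphism of finite cohomological dimension has a pushforward commuting with filtered colimits and satisfying base change, via the totalization of the \v{C}ech nerve of an affine atlas with a uniform amplitude bound --- is correctly isolated and cited rather than reproved, which is also where the paper's citation carries its weight. The trade-off is the usual one: the paper's one-line proof is economical but opaque, while your version makes visible which parts of the lemma are formal bookkeeping with $t$-structures and which part requires the descent-and-amplitude argument. One small remark: your intermediate step that every \ucd morphism is \cd is stated in the paper only later, as part of Proposition~\ref{Prop:ucd0_equivalent} (whose proof in turn leans on this lemma), so your direct argument for it is a welcome de-circularization rather than a redundancy.
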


\begin{proof}
    This is \cite[Prop.~A.1.5, Lem.~A.1.6]{HalpernleistnerMapping}.
\end{proof} 

\begin{remark} \label{rem:2.4}
    Let $f \colon X \to Y$ be a morphism of Artin stacks. Since $f_*$ is always left $t$-exact, it follows that $f_*$ is $t$-exact if and only if $f_*(\QCoh(X)_{\geq 0}) \subseteq \QCoh(Y)_{\geq 0}$.

    The proof of \cite[Lem.~A.1.6]{HalpernleistnerMapping} assumes qcqs, but only uses that the categories of quasi-coherent modules in question are presentable and $t$-complete, which is true for all Artin stacks \cite[Ch.~3, Cor.~1.5.7]{GaitsgoryStudy}. It follows that $f$ is \cd if and only if $f_*$ is $t$-exact.
\end{remark}

We have the following cancellation property (cf.~\cite[Prop.~3.13]{AlperGood}), which will be of use later on.

\begin{proposition} \label{Prop:cancellation property}
    Let $f \colon X \to Y$ and $g \colon Y \to Z$ be morphisms of Artin stacks. Suppose that $g \circ f$ is \ucd and $g$ has \ucd diagonal. Then $f$ is \ucd.
\end{proposition}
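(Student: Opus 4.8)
The statement is the standard cancellation property for \ucd morphisms, and the classical analogue (Alper, Prop.~3.13) is proved by factoring $f$ through the graph. I would do the same here. The plan is to write $f$ as the composition
\[
X \xrightarrow{\Gamma_f} X \times_Z Y \xrightarrow{\mathrm{pr}_2} Y,
\]
where $\Gamma_f = (\id_X, f)$ is the graph of $f$ over $Z$. Since being \ucd is stable under composition (Lemma~\ref{Lem:ucd0_props}), it suffices to show that each of these two morphisms is \ucd. The projection $\mathrm{pr}_2 \colon X \times_Z Y \to Y$ is the base-change of $g \circ f \colon X \to Z$ along $g \colon Y \to Z$, hence is \ucd by the base-change stability in Lemma~\ref{Lem:ucd0_props}. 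So the whole problem reduces to showing that the graph $\Gamma_f$ is \ucd.

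**Handling the graph.**

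For the graph, I would use the usual identification of $\Gamma_f$ with a pullback of the diagonal $\Delta_g \colon Y \to Y \times_Z Y$. Concretely, there is a Cartesian square
\[
\begin{tikzcd}
X \arrow[r, "\Gamma_f"] \arrow[d] & X \times_Z Y \arrow[d, "f \times \id_Y"] \\
Y \arrow[r, "\Delta_g"] & Y \times_Z Y,
\end{tikzcd}
\]
(where the left vertical map is $f$ and the bottom is the diagonal of $g$). One checks this is Cartesian by the usual manipulation of fiber products: $X \times_{Y \times_Z Y} Y \simeq X \times_Y Y \times_Y Y \simeq X$ using that $X \times_Z Y$ lives over $Y \times_Z Y$ via $f$ on the first factor. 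Since $g$ has \ucd diagonal by hypothesis, $\Delta_g$ is \ucd, and therefore its base-change $\Gamma_f$ is \ucd by Lemma~\ref{Lem:ucd0_props} again. Composing, $f = \mathrm{pr}_2 \circ \Gamma_f$ is \ucd, as desired.

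**Anticipated obstacle.**

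The only genuine point requiring care is verifying that the square above is Cartesian at the level of derived stacks — i.e., that the derived fiber product computes correctly — but this is a formal consequence of the associativity and compatibility of (homotopy) fiber products, with no truncation issues, so it should be routine. A secondary point is to make sure the factorization $f = \mathrm{pr}_2 \circ \Gamma_f$ is literally the identity-on-$X$ graph map followed by the projection; this is immediate from the universal property of $X \times_Z Y$. I expect no real difficulty: the proof is three applications of Lemma~\ref{Lem:ucd0_props} (composition, base-change, base-change) glued together by two standard Cartesian diagrams, exactly mirroring the classical argument. I would also remark that one does not even need $g$ itself to be \ucd, only its diagonal, which is why the hypothesis is stated the way it is.
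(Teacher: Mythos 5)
Your proposal is correct and is essentially identical to the paper's proof: both factor $f$ as the graph $(\id,f)\colon X \to X\times_Z Y$ (a base-change of $\Delta_g$) followed by the projection $X\times_Z Y\to Y$ (a base-change of $g\circ f$), and then apply the stability of \ucd under base-change and composition from Lemma~\ref{Lem:ucd0_props}.
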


\begin{proof}
    Consider the diagram with Cartesian squares
    \begin{align*}
    \xymatrix{
    & X \ar[r]^-{(\id, f)} \ar[dl] & X \times_Z Y \ar[r] \ar[dr] \ar[dl] & Y \ar[dr] \\
    Y \ar[r]_-{\Delta} & Y \times_Z Y & & X \ar[r] & Z.
    }
    \end{align*}
    By assumption on $\Delta$ and the base-change property of being \ucd, it follows that $f$ is the composition of two \ucd morphisms and hence \ucd.
\end{proof}

The following proposition shows that being \ucd is equivalent to being universally $t$-exact.

\begin{proposition}
\label{Prop:ucd0_equivalent}
    Let $f \colon X \to Y$ be a morphism of Artin stacks. Then the following are equivalent:
    \begin{enumerate}
        \item $f$ is \ucd.
        \item For all $Y' \to Y$ with $Y'$ Artin, it holds that $f'_*$ is $t$-exact, where $f' \colon X' \to Y'$ is the pullback of $f$ along $Y' \to Y$.
        \item For all $\Spec A \to Y$ it holds that $(f_A)_*$ is $t$-exact.
    \end{enumerate}
\end{proposition}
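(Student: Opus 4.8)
The plan is to prove the cycle of implications (i) $\Rightarrow$ (ii) $\Rightarrow$ (iii) $\Rightarrow$ (i), using Remark~\ref{rem:2.4} to translate between ``\ucd'' and ``$t$-exactness of pushforward'' wherever convenient.

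First, (i) $\Rightarrow$ (ii): suppose $f$ is \ucd, and let $Y' \to Y$ be any morphism with $Y'$ Artin, with pullback $f' \colon X' \to Y'$. By Lemma~\ref{Lem:ucd0_props}, being \ucd is stable under base-change, so $f'$ is \ucd. Now I would like to conclude that $f'_*$ is $t$-exact. Since $f'_*$ is always left $t$-exact, by the first part of Remark~\ref{rem:2.4} it suffices to show $f'_*$ sends connective modules to connective modules. A connective $M \in \QCoh(X')_{\geq 0}$ can be written (via its Postnikov/truncation tower) as a limit, or better, one uses that $\QCoh(X')$ is $t$-complete and that $f'_*$ preserves the relevant limits — but more directly, one reduces to the heart: for connective $M$, each homotopy object $\pi_k M$ lies in $\QCoh(X')^\heartsuit$, and since $f'$ is \cd (being \ucd), the long exact sequences coming from the truncation tower together with $t$-completeness force $f'_* M$ to be connective. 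Concretely, $\pi_j f'_* M = 0$ for $j < 0$ follows by induction on the Postnikov tower of $M$, using that $f'_*$ applied to a discrete module is discrete and that $f'_*$ is left $t$-exact and preserves the limit defining $M$ from its truncations (valid since, as noted in Remark~\ref{rem:2.4}, the argument of \cite[Lem.~A.1.6]{HalpernleistnerMapping} only needs presentability and $t$-completeness). Alternatively, and perhaps cleanest: $f'$ is \cd, so by Remark~\ref{rem:2.4} $f'_*$ is $t$-exact, which is exactly (ii).

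Next, (ii) $\Rightarrow$ (iii) is immediate by specializing $Y' = \Spec A$, since affine schemes are Artin stacks. Finally, (iii) $\Rightarrow$ (i): assume $(f_A)_*$ is $t$-exact for every $\Spec A \to Y$. We must show that for every $\Spec A \to Y$ and every $M \in \QCoh(X_A)^\heartsuit$, the pushforward $(f_A)_* M$ is connective. But $t$-exactness of $(f_A)_*$ says precisely that it preserves connective objects, and $M \in \QCoh(X_A)^\heartsuit \subseteq \QCoh(X_A)_{\geq 0}$ is connective, so $(f_A)_* M$ is connective; since $(f_A)_*$ is also left $t$-exact, $(f_A)_*M$ is discrete. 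This is the definition of \ucd.

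The main subtlety — and the only place requiring care — is the passage ``$f$ \cd $\Rightarrow$ $f_*$ preserves \emph{all} connective modules, not just the heart.'' This is handled by the observation recorded in Remark~\ref{rem:2.4}: the proof of \cite[Lem.~A.1.6]{HalpernleistnerMapping} that $f_*$ commutes with the limits appearing in Postnikov towers uses only $t$-completeness and presentability of $\QCoh(X)$ and $\QCoh(Y)$, which hold for arbitrary Artin stacks by \cite[Ch.~3, Cor.~1.5.7]{GaitsgoryStudy}. Granting this, one writes a connective $M$ as $\lim_n \tau_{\leq n} M$, notes each $\tau_{\leq n}M$ has finitely many connective homotopy objects hence (by dévissage using the \cd hypothesis) $f_*\tau_{\leq n}M$ is connective, and then $f_* M = \lim_n f_* \tau_{\leq n} M$ is a limit of connective objects, which is connective by $t$-completeness of the target. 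Everything else is formal manipulation of the adjunction and the definitions, so I would keep the writeup brief and lean on Remark~\ref{rem:2.4} and Lemma~\ref{Lem:ucd0_props} for the substantive inputs.
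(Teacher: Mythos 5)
Your proposal is correct and follows essentially the same route as the paper: (ii)~$\Rightarrow$~(iii) by specialization, (iii)~$\Rightarrow$~(i) directly from the definitions, and (i)~$\Rightarrow$~(ii) by combining base-change stability of \ucd (Lemma~\ref{Lem:ucd0_props}) with the equivalence ``\cd $\Leftrightarrow$ $t$-exact pushforward'' of Remark~\ref{rem:2.4}. The lengthy Postnikov-tower digression in your (i)~$\Rightarrow$~(ii) is unnecessary, as you yourself note at its end --- the paper simply invokes Remark~\ref{rem:2.4}, which already packages that argument.
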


\begin{proof}
    Clearly we have that (ii) implies (iii) and that (iii) implies (i). By Lemma \ref{Lem:ucd0_props} and Remark \ref{rem:2.4}, (i) implies (ii).   
\end{proof}

When the target of a morphism has quasi-affine diagonal, universality is no longer a condition, as shown in the next proposition.

\begin{proposition} \label{Prop:quasi-aff_cd_ucd_$t$-exact}
    Let $f \colon X \to Y$ be a qcqs morphism of Artin stacks and assume that $Y$ has quasi-affine diagonal. Then the following are equivalent:
    \begin{enumerate}
        \item $f$ is \ucd.
        \item $f$ is \cd.
        \item $f_*$ is $t$-exact.
    \end{enumerate}
\end{proposition}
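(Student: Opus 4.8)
The plan is to dispose of the cheap implications first and concentrate on the one with content. The equivalence (ii)$\,\Leftrightarrow\,$(iii) is Remark~\ref{rem:2.4}, and (i)$\,\Rightarrow\,$(ii) is the special case $Y'=Y$ of Proposition~\ref{Prop:ucd0_equivalent}(ii). So everything comes down to (ii)$\,\Rightarrow\,$(i), and by Proposition~\ref{Prop:ucd0_equivalent} it is enough to show that for every $y\colon\Spec A\to Y$ the base change $f_A\colon X_A\to\Spec A$ is \cd. The quasi-affine diagonal enters at exactly this point: I would first observe that any such $y$ is automatically quasi-affine. Indeed, the graph $\Gamma_y\colon\Spec A\to\Spec A\times_{\ZZ}Y$ is a base change of $\Delta_Y$ and hence quasi-affine, while the second projection $\Spec A\times_{\ZZ}Y\to Y$ is a base change of the affine morphism $\Spec A\to\Spec\ZZ$; a quasi-affine morphism followed by an affine one is again quasi-affine, so $y$ is quasi-affine. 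By the very definition of quasi-affineness we may then factor $y=p\circ j$ with $j\colon\Spec A\hookrightarrow W$ a quasi-compact open immersion and $p\colon W\to Y$ affine, and base-changing $f$ produces a corresponding factorization $X_A\xrightarrow{j'}X_W\xrightarrow{p'}X$ with $j'$ a quasi-compact open immersion and $p'$ affine. Thus $f_A$ is obtained from $f$ by an affine base change followed by an open-immersion base change, and I would handle these two steps in turn.

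For the affine base change, I claim $f_W\coloneqq X\times_Y W\to W$ is \cd. The key input is that for an affine morphism $p\colon W\to Y$ the pushforward $p_*$ is $t$-exact and \emph{reflects} connectivity; this is checked after pulling back along a smooth affine atlas of $Y$, over which $p_*$ becomes restriction of scalars, a functor that detects connectivity on underlying modules. Granting this, for $N\in\QCoh(X_W)^{\heartsuit}$ one has $p_*(f_W)_*N\simeq f_*p'_*N$ by functoriality of pushforward, and $p'_*N$ is discrete (as $p'$ is affine), so $f_*p'_*N$ is connective by (ii); reflecting connectivity along $p$ shows $(f_W)_*N$ is connective, i.e.\ $f_W$ is \cd. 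In other words, being \cd is stable under affine base change.

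For the open-immersion base change, the naive attempt of pushing $N$ forward along $j'$ to $X_W$ is worthless because $j'_*$ destroys all connectivity information; the remedy is instead to \emph{extend} $N$ back to the heart on $X_W$. Concretely, given $N\in\QCoh(X_A)^{\heartsuit}$ I would set $M\coloneqq\tau_{\geq 0}(j'_*N)\in\QCoh(X_W)^{\heartsuit}$ (note $j'_*N$ is quasi-coherent, since $j'$ is qcqs, and coconnective), and use that for an open immersion $(j')^*(j')_*\simeq\id$ together with the $t$-exactness of $(j')^*$ to conclude $(j')^*M\simeq N$. Then flat base change along the Cartesian square with bottom row $j$ (legitimate since $f_W$ is qcqs and $j$ is flat) gives $(f_A)_*N\simeq(f_A)_*(j')^*M\simeq j^*(f_W)_*M$, which is connective because $f_W$ is \cd and $j^*$ is $t$-exact. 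Hence $f_A$ is \cd for every $y$, so $f$ is \ucd, completing the argument. I expect this last transfer across the open immersion to be the only genuine obstacle; the ingredients it relies on — $t$-exact, connectivity-reflecting pushforward for affine morphisms, the identity $(j')^*(j')_*\simeq\id$ and $t$-exactness of pullback for open immersions, and flat base change for qcqs pushforward — are all standard but worth isolating explicitly.
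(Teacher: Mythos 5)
Your proposal is correct and follows essentially the same route as the paper's proof: reduce via the quasi-affine diagonal to showing that \cd is preserved under affine base change and under base change along quasi-compact open immersions, handling the latter by extending a heart object from $X_A$ using $\pi_0\bigl((j')_*N\bigr)$ (your $\tau_{\geq 0}(j'_*N)$ is the same object, since $j'_*N$ is coconnective) together with $(j')^*(j')_*\simeq\id$ and flat base change. The only cosmetic difference is that you spell out the affine base-change step and the quasi-affineness of $\Spec A\to Y$ via the graph, where the paper cites \cite[Prop.~A.1.9]{HalpernleistnerMapping} and Lemma~\ref{Lem:qaffine_bc} respectively.
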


\begin{proof}
    By definition, (i) implies (ii) and Remark~\ref{rem:2.4} shows that (ii) is equivalent to (iii). It suffices to show that (ii) implies (i).
    
    Let $g \colon \Spec A \to Y$ be a morphism. Since $Y$ has quasi-affine diagonal, $g$ is quasi-affine by Lemma \ref{Lem:qaffine_bc}. Now, as in the proof of \cite[Prop.~A.1.9]{HalpernleistnerMapping}, being \cd is preserved under affine base-change. We thus need to show that it is also preserved by base-change by open immersions, hence may assume that $g$ is an open immersion. Consider the Cartesian diagram
    \begin{align*}
        \xymatrix{
        X_A \ar[d]_-{f_A} \ar[r]^-{g_A} & X \ar[d]^-f \\
        \Spec A \ar[r]_-{g} & Y.
        }
    \end{align*}
    
    Let $M \in \QCoh(X_A)^{\heartsuit}$. Since $(g_A)_*$ is monic, the counit $g_A^*({g_A})_* M \to M$ is an equivalence, hence, using the fact that $g_A^\ast$ is $t$-exact, so is
    \[ g_A^*\pi_0 ( {g_A}_* M ) \simeq \pi_0 (g_A^*{g_A}_* M ) \to \pi_0 M \simeq M. \]
    We may thus assume that $M$ is of the form $(g_A)^\ast N$ for some $N \in \QCoh(X)^{\heartsuit}$. By \cite[Lemma~A.1.3]{HalpernleistnerMapping}, since $g$ is flat and $M \in \QCoh(X_A)^{\heartsuit}$, we get an equivalence 
    $$ (f_A)_\ast M \simeq (f_A)_\ast (g_A)^\ast N \simeq g^\ast f_\ast N \in \QCoh(\Spec A)^{\heartsuit} $$
    since $f$ is \cd and $g^\ast$ is $t$-exact. Thus $f_A$ is \cd, as we want.
\end{proof}

\begin{remark}
    The implication from (ii) to (i) in the above proof also follows from \cite[Lemma~2.2(6)]{HallRydhPerfect}.
\end{remark}

\begin{remark}
    If $Y$ does not have affine stabilizers, then being \cd does not imply being \ucd, see \cite[Rem.~1.6]{HallRydhGroups} for an example.
\end{remark}

The following proposition establishes a connection with cohomological affineness in the classical case, as defined in \cite{AlperGood}. Before giving the statement, recall that an Artin stack $X$ is \emph{affine-pointed} if every morphism $\Spec k \to X$, where $k$ is a field, is affine. If $X$ has quasi-affine diagonal then it is affine-pointed \cite[Lemma~4.6]{HRAffPointed}. Part (iv) owes its existence to discussions with David Rydh.

\begin{proposition}
\label{Prop:ucd0_props}
    Let $f \colon X \to Y$ be a qcqs morphism of Artin stacks.
    \begin{enumerate}
        \item If $f$ is \cd, then it is cohomologically affine. If $f$ is \ucd, then it is universally cohomologically affine.
        \item Suppose that $X$ and $Y$ either are both affine-pointed with $X_\cl$ Noetherian or both have affine diagonal with $X$ quasi-compact. If $f$ is cohomologically affine, then it is \cd. 
        \item Suppose that $X$ and $Y$ have quasi-affine diagonal with $X_\cl$ Noetherian. If $f$ is cohomologically affine, then it is \ucd.
        \item Suppose that $f$ has affine diagonal. Then $f$ is \ucd if and only if it is universally cohomologically affine.
    \end{enumerate}
\end{proposition}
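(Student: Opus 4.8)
The plan is to deduce parts (iii) and (iv) from part (ii), so the substance lies in (i) and in the two cases of (ii). For (i), I would use that $f$ being \cd means $f_*$ is $t$-exact (Remark~\ref{rem:2.4}): applying the exact functor $f_*$ of stable categories to a short exact sequence $0 \to M' \to M \to M'' \to 0$ in $\QCoh(X)^{\heartsuit}$ gives a fiber sequence $f_*M' \to f_*M \to f_*M''$ of discrete objects, whose long exact homotopy sequence collapses to a short exact sequence of hearts since $\pi_{-1}f_*M' = 0$; hence $f$ is cohomologically affine. For the universal statement I would note that any base change $f'$ of a \ucd morphism is again \ucd (see Lemma~\ref{Lem:ucd0_props} and Proposition~\ref{Prop:ucd0_equivalent}), hence \cd, hence cohomologically affine by the previous line, so $f$ is universally cohomologically affine.

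For part (ii), I would first reduce to the case where $X$ and $Y$ are classical, which is legitimate because both being \cd and being cohomologically affine depend only on $f_\cl$ (Remarks~\ref{Rem:coh_dim_cl} and~\ref{Rem:coh_aff_cl}), and because having affine diagonal, being quasi-compact, and being affine-pointed are likewise insensitive to the derived structure. In the affine-diagonal case, $X$ is quasi-compact, so I would choose a smooth surjection $p\colon U \to X$ with $U$ affine and form its \v{C}ech nerve $U_\bullet \to X$; since $X$ has affine diagonal each $U_n$ is affine and $p_n\colon U_n \to X$ is affine, and since $Y$ also has affine diagonal and $U_n$ is affine the composite $f_n\colon U_n \to Y$ is affine, hence $t$-exact. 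Faithfully flat descent gives $M \simeq \lim_{[n]\in\Delta}(p_n)_*p_n^{*}M$, and flatness of $p$ makes each $D_n \coloneqq (p_n)_*p_n^{*}M$ discrete, so by Dold--Kan the complex $0 \to M \to D_0 \to D_1 \to \cdots$ is a resolution in $\QCoh(X)^{\heartsuit}$ with each $f_*D_n \simeq (f_n)_*p_n^{*}M$ discrete. Applying the limit-preserving functor $f_*$ and Dold--Kan once more, I would identify $\pi_{-s}(f_*M)$ with the $s$-th cohomology of the complex obtained by applying $f_*^{\heartsuit}$ to this resolution; exactness of $f_*^{\heartsuit}$ (i.e.\ cohomological affineness) makes these groups vanish for $s\geq 1$, and together with the left $t$-exactness of $f_*$ this gives that $f_*M$ is discrete, so $f$ is \cd.

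In the affine-pointed Noetherian case a smooth atlas need no longer have affine \v{C}ech nerve, so instead I would invoke that Noetherianity together with affine-pointedness makes $f$ concentrated, hence that $f_*$ restricted to $\QCoh(X)^{\heartsuit}$ computes the classical higher direct images (cf.~\cite{HallRydhPerfect}); cohomological affineness then forces $\pi_{-s}f_*M = R^sf_*M = 0$ for $s\geq 1$, so $f$ is \cd. Granting (ii), I would conclude as follows. For (iii): stacks with quasi-affine diagonal are affine-pointed by \cite[Lemma~4.6]{HRAffPointed} and $X_\cl$ is Noetherian, so (ii) gives that $f$ is \cd; since moreover $Y$ has quasi-affine diagonal and $f$ is qcqs, Proposition~\ref{Prop:quasi-aff_cd_ucd_$t$-exact} upgrades \cd to \ucd. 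For (iv): the ``only if'' direction is part (i); conversely, if $f$ is universally cohomologically affine with affine diagonal, then for every $\Spec A \to Y$ the base change $f_A\colon X_A \to \Spec A$ is qcqs (so $X_A$ is quasi-compact), cohomologically affine, and $X_A$ has affine diagonal — factor $\Delta_{X_A}$ as $\Delta_{f_A}$ followed by the base change of the closed immersion $\Spec A \to \Spec A\times\Spec A$, both affine — so the affine-diagonal case of (ii) shows $f_A$ is \cd; as this holds for all $\Spec A \to Y$, the morphism $f$ is \ucd.

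I expect the main obstacle to be the affine-pointed Noetherian case of (ii): unlike the affine-diagonal case one cannot reduce along an atlas to affine pieces, so one must bound the cohomological amplitude of $f_*$ on hearts — equivalently, compare it with the classical higher direct images — directly from Noetherianity and affine-pointedness, and it is this concentration input that carries the real technical weight.
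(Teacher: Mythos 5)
Your parts (i), (iii) and (iv) follow the paper's proof essentially step for step: (i) is the observation that a $t$-exact functor of stable categories restricts to an exact functor on hearts, together with Proposition~\ref{Prop:ucd0_equivalent} for the universal statement; (iii) combines (ii) with the fact that quasi-affine diagonal implies affine-pointed and with Proposition~\ref{Prop:quasi-aff_cd_ucd_$t$-exact}; and (iv) applies the affine-diagonal case of (ii) to each base change $f_A$, exactly as in the paper.

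For part (ii) the paper reduces to classical stacks (Remarks~\ref{Rem:coh_aff_cl} and~\ref{Rem:coh_dim_cl}, as you do) and then treats both hypothesis cases \emph{uniformly} by citing \cite[Lem.~C.3]{HallNeemanRydh}: for $M\in\QCoh(X)^{\heartsuit}$ with injective resolution $M\to I^\bullet$ one has $f_*M\simeq f_*^{\heartsuit}I^\bullet$, and exactness of $f_*^{\heartsuit}$ makes the right-hand side discrete. Your \v{C}ech-nerve computation in the affine-diagonal case is a correct, more self-contained substitute for that citation in that case: every $U_n$, every $p_n\colon U_n\to X$ and every $f_n\colon U_n\to Y$ is indeed affine under the diagonal hypotheses, and the degenerate Tot spectral sequence identifies $\pi_{-s}f_*M$ with the cohomology of $f_*^{\heartsuit}$ applied to the \v{C}ech resolution. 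What does \emph{not} work as stated is your affine-pointed case: the claim that Noetherianity plus affine-pointedness makes $f$ concentrated is false, since concentration requires finite cohomological dimension and, e.g., $B\GL_n$ over $\mathbb{F}_p$ is Noetherian and affine-pointed but of infinite cohomological dimension. The conclusion you draw from it --- that $f_*$ on objects of the heart is computed by the classical derived functors $R^sf_*^{\heartsuit}$ --- is nevertheless true under the stated hypotheses: it is precisely \cite[Lem.~C.3]{HallNeemanRydh}, which needs no finite-dimensionality input because it concerns bounded-below objects. Substituting that citation (which is what the paper does, and which also covers the affine-diagonal case, rendering the \v{C}ech computation optional) closes the gap, since the right derived functors of an exact functor vanish in positive degrees.
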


\begin{proof}
    By Remark \ref{Rem:coh_aff_cl} and Remark \ref{Rem:coh_dim_cl}, we may assume that $X,Y$ are classical.

    If $f$ is \cd, then $f_*$ restricts to a functor $\QCoh(X)^{\heartsuit} \to \QCoh(Y)_{\geq 0}$. Since $f_*$ is always left $t$-exact, $f_*$ then in fact restricts to $\QCoh(X)^{\heartsuit} \to \QCoh(Y)^{\heartsuit}$, which is thus exact. The same argument applies after base change to imply the universal case. This shows (i).
    
    Recall that $\QCoh(X)^\heartsuit$ has enough injectives \cite[\href{https://stacks.math.columbia.edu/tag/0781}{Tag 0781}]{stacks-project}. Part (ii) is a direct consequence of \cite[Lem.~C.3]{HallNeemanRydh}, which implies that for any $M \in \QCoh(X)^{\heartsuit}$ and an injective resolution $M \to I^\bullet \in \QCoh(X)^{\heartsuit}$, there is a natural equivalence between $f_\ast M$ and $f_\ast^{\heartsuit} I^\bullet$. Thus, exactness of $f_\ast^{\heartsuit}$ implies that $f_\ast^{\heartsuit} I^\bullet \in \QCoh(Y)^{\heartsuit}$ and hence $f_\ast M \in \QCoh(Y)^{\heartsuit}$, so that $f$ is \cd.

    Part (iii) follows from part (ii) and Proposition~\ref{Prop:quasi-aff_cd_ucd_$t$-exact}, since having quasi-affine diagonal implies being affine-pointed. 

    Finally, for part (iv), the general fact that \ucd implies universal cohomological affineness follows from part (i). For the converse, if $\Spec A \to Y$ is any morphism, then the pullback $f_A \colon X_A \to \Spec A$ is qcqs and has affine diagonal, so since $\Spec A \to \ast$ has affine diagonal, the same holds for $X_A$. Moreover, $X_A$ is quasi-compact, since the same is true for $\Spec A$. By part (ii), we deduce that $f_A$ is \cd, and hence $f$ is \ucd, as desired.
    \end{proof}
    
    One may wonder if cohomologically affine (respectively universally cohomologically affine) implies \cd (respectively \ucd), in general. The problem lies in the fact that if the conditions in (ii) are not met, it is not necessarily true that the derived functor $Rf_*^{\heartsuit}$ coincides with $f_*$, as the following example shows.

\begin{example}[Hall--Rydh] \label{example:gms1}
    Let $A$ be a classical abelian variety of dimension $\geq 1$ over $\ast = \Spec \CC$, e.g., an elliptic curve, and $f$ the universal torsor $\ast \to BA$. Then $f$ is cohomologically affine, but neither \cd nor universally cohomologically affine.

    First, recall that $A$ is proper and connected over $\CC$, hence $\Gamma(A,\oO_A)^\heartsuit \simeq \CC$  \cite[\href{https://stacks.math.columbia.edu/tag/0FD2}{Tag 0FD2}]{stacks-project}. 
    
    Second, recall that $\QCoh(BA)^\heartsuit$ is equivalent to the category of locally finite representations of $A$ \cite{HallRydhGroups}. In particular, any $M \in \QCoh(BA)^\heartsuit$ can be written as a colimit $M = \colim M_\alpha$ of finite-dimensional $A$-representations $M_\alpha$. Such $M_\alpha$ is classified by a map $A \to \GL(M_\alpha)$, which is trivial since $\Gamma(A,\oO_A)^\heartsuit \simeq \CC$.
    It follows that $f_*^\heartsuit$ is invertible, hence that $f$ is cohomologically affine. 

    Let $p \colon A \to *$ be the structure map. Since $f$ is schematic and quasi-compact, it holds that
    \[ f^*f_*(\CC) \simeq p_*p^*(\CC) \simeq p_*\oO_A \simeq \Gamma(A,\oO_A) \]
    by \cite[Prop.~2.2.2]{GaitsgoryStudy}. Now $\Gamma(A,\oO_A)$ is non-discrete (for example, by Serre duality), hence so is $f_*(\CC)$ by \cite[Prop.~1.5.4]{GaitsgoryStudy}.

    Finally, observe that $f$ is not universally cohomologically affine since $A \to *$ is not cohomologically affine; for schemes, the latter is equivalent to being affine.
\end{example}

\begin{example}[Rydh] \label{example:gms2}
    For $A,\ast$ as in the previous example, consider the structure map $g \colon BA \to \ast$. Then $g$ is universally cohomologically affine essentially by the same argument as in the previous example.

    But $g$ is not \cd. Indeed, suppose it were. Then $g_*$ would be $t$-exact by Proposition~\ref{Prop:quasi-aff_cd_ucd_$t$-exact}. In particular, we would get
    \[ \pi_n(\CC) \simeq \pi_ng_*f_*(\CC) \simeq g_* \pi_nf_*(\CC) \]
    for all $n$. Since there is some $n \not=0$ for which $\pi_nf_*(\CC) \not=0$ by the previous example, this is absurd (using that $g_*^\heartsuit \simeq \id$).    
\end{example}

\begin{example}\cite[Rem.~1.6]{HallRydhGroups} \label{example:gms3}
    Let $E(A)$ be the universal vector extension of an abelian variety $A$. Then the morphism $\ast \to BE(A)$ is \cd but not universally cohomologically affine.
\end{example}

\begin{example}\cite[Rem.~1.6]{HallRydhGroups} \label{example:gms4}
    Let $E(A)$ be the universal vector extension of an abelian variety $A$. Then the morphism $BE(A) \to \ast$ is \ucd.
\end{example}

Finally, it is easy to see that affine qcqs morphisms are \ucd. The following proposition gives a partial converse. We thank David Rydh for pointing this out to us.

\begin{proposition} \label{prop:rep ucd qcqs implies affine}
    A morphism $f \colon X \to Y$ of Artin stacks is affine if and only if it is representable, qcqs and \ucd.
\end{proposition}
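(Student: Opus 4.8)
The forward direction is the easy one: if $f$ is affine then it is obviously representable and qcqs, and affine qcqs morphisms are \ucd (a ring map $A \to B$ gives pushforward which is just restriction of scalars, which is $t$-exact, and this is stable under base change since affineness is). So the content is the converse: assume $f$ is representable, qcqs and \ucd, and show it is affine. The plan is to reduce to the case where $Y = \Spec A$ is affine, since affineness can be checked after base change to affine schemes and \ucd is stable under base change by Lemma~\ref{Lem:ucd0_props}. So from now on $X$ is a qcqs algebraic space (representability plus $Y$ affine) that is \cd over $\Spec A$, and we want $X$ affine.

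Now I would invoke Serre-type affineness criteria. Since $f$ is \ucd it is in particular \cd, hence (Proposition~\ref{Prop:ucd0_props}(i)) cohomologically affine, so $f_*^\heartsuit \colon \QCoh(X)^\heartsuit \to \QCoh(\Spec A)^\heartsuit$ is exact; moreover, being \cd, $f_*$ is $t$-exact (Remark~\ref{rem:2.4}), so $f_*M$ is discrete for every discrete $M$, and $\mathrm{H}^i(X,M) = 0$ for all $i>0$ and all quasi-coherent $M \in \QCoh(X)^\heartsuit$. For a qcqs \emph{classical} algebraic space, vanishing of higher cohomology of all quasi-coherent sheaves implies affineness — this is Serre's criterion in the form of \cite[\spref{07V6}]{stacks-project} (the algebraic-space version of Serre's criterion). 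Thus $X_\cl$ is an affine scheme. Finally, to pass from $X_\cl$ affine to $X$ affine: $X$ is a derived algebraic space whose classical truncation is affine, so by topological invariance of the small étale site / \cite[Cor.~1.1.6.3]{LurieSpectral} (already used in the proof of Lemma~\ref{Lem:qaffine_bc} in exactly this situation), $X$ is an affine derived scheme.

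The main obstacle — and the only genuinely non-formal point — is the step where I claim that cohomological affineness over $\Spec A$, together with $t$-exactness of $f_*$, gives the vanishing of \emph{all} higher quasi-coherent cohomology on $X_\cl$ in the form needed for Serre's criterion for algebraic spaces. One has to be slightly careful that $\mathrm{H}^i(X_\cl, -)$ for $i>0$ on quasi-coherent sheaves is computed correctly — this is where the qcqs hypothesis on $f$ is essential, ensuring $f_*$ has finite cohomological amplitude and commutes with the relevant colimits (Lemma~\ref{Lem:ucd0_props}) so that the derived pushforward $f_*$ agrees with the classical $Rf_*$ on hearts and the $t$-exactness statement really does translate into $R^i f_* M = 0$ for $i > 0$. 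Once that translation is in hand, Serre's criterion for quasi-compact quasi-separated algebraic spaces closes the argument, and the passage to the derived level is routine.
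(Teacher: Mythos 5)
Your argument is correct and follows essentially the same route as the paper: reduce to an affine base using representability and base-change stability of \ucd, deduce cohomological affineness from Proposition~\ref{Prop:ucd0_props}(i), and conclude affineness by Serre's criterion for qcqs algebraic spaces (the paper cites \cite[Prop.~3.3]{AlperGood} where you cite the Stacks Project). The only difference is that you make explicit the passage from $X_\cl$ affine to $X$ affine via \cite[Cor.~1.1.6.3]{LurieSpectral}, which the paper's proof leaves implicit; this is a reasonable bit of added care rather than a different proof.
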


\begin{proof}
    We only need to show that representable, qcqs and \ucd implies affine. Let $\Spec A \to Y$ be a morphism and consider the pullback $f_A \colon X_A \to \Spec A$. Since $f$ is representable, $X_A$ is an algebraic space. By assumption, the morphism $f_A \colon X_A \to \Spec A$ is \ucd, and hence, by Proposition~\ref{Prop:ucd0_props}(i), cohomologically affine. By Serre's criterion \cite[Prop.~3.3]{AlperGood}, $f_A$ is affine, so we conclude that $X_A$ is affine and therefore $f$ is affine.
\end{proof}

\begin{example}
    Any quasi-affine morphism that is not affine is not \ucd. The reason for this is that representable and \ucd morphisms have to be affine by the previous proposition. For example, the quasi-affine open embedding $\AA^2 \setminus \lbrace 0 \rbrace \subseteq \AA^2$ is not \ucd.
\end{example}

\section{Derived good moduli spaces} \label{sec:3}

In this section, we give the main definition of this paper, that of a good moduli space for a (derived) Artin stack. We also establish the main properties of good moduli spaces. Our discussion and results closely follow Alper's original paper \cite{AlperGood} which introduced good moduli spaces for classical Artin stacks. Indeed, our main aim is to provide the appropriate generalizations of these to the setting of derived algebraic geometry. 

We show that, under certain natural assumptions, our definition is consistent with Alper's definition. Moreover, the vast majority of the classical results on good moduli spaces carry over to the derived world, as the property of having a good moduli space can be detected at the level of the classical underlying stack. We also establish fundamental properties, shared with the classical case, such as universality of good moduli spaces for maps to algebraic spaces, among others.

\subsection{Definitions: derived and classical}

\begin{definition} \label{def:gms}
A morphism $q \colon X \to Y$ of Artin stacks is a \emph{good moduli space morphism} if it is qcqs and the following two conditions are satisfied:
\begin{enumerate}
    \item[(i)] $q$ is universally of cohomological dimension zero.
    \item[(ii)] The natural morphism $\oO_Y \to q_\ast \oO_X$ is an equivalence.
\end{enumerate}
If moreover $Y$ is an algebraic space, then $q \colon X \to Y$ is called a \emph{good moduli space} for $X$.
\end{definition}

\begin{remark}
    Let $q : X \to Y$ be a morphism from an Artin stack to a quasi-separated algebraic space.
    We can also equivalently require---using Example~\ref{Ex:algsp_quasiaff} and Proposition~\ref{Prop:quasi-aff_cd_ucd_$t$-exact}---that $q$ is \cd or that $q_*$ is $t$-exact, in order for $Y$ to be a good moduli space for $X$.
\end{remark}

Before we explore properties of good moduli spaces, let us recall the classical definition given by Alper \cite{AlperGood} and compare it with the above definition for a classical Artin stack. This will 
 often allow us to bootstrap from the classical case to the case of derived Artin stacks in what follows. We elect to use the term ``$\sA$-good'' to eliminate possible confusion.
 
\begin{definition}
    An \emph{$\sA$-good moduli space} (resp. \emph{$\mathcal{U}\sA$-good moduli space}) for a classical Artin stack $X$ is a qcqs morphism $q \colon X \to Y$ to a classical algebraic space $Y$ such that $q$ is cohomologically affine (resp. universally cohomologically affine) and $\oO_Y \simeq q_*^{\heartsuit} \oO_X$ under the natural morphism. 
\end{definition}

\begin{remark}
    The stronger notion of $\mathcal{U}\sA$-good moduli spaces has often been used in the literature in place of $\sA$-good moduli spaces, see for example~\cite{AHR2,FerrandPush}. They coincide when $Y$ is quasi-separated.
\end{remark}

\begin{proposition} \label{Prop:comparison between Alper and derived GMS}
    Let $X$ be a classical Artin stack and $q \colon X \to Y$ a morphism to a classical algebraic space $Y$. 
    \begin{enumerate}
        \item If $q$ defines a good moduli space for $X$, then it defines a $\mathcal{U}\sA$-good and in particular an $\sA$-good moduli space for $X$.
        \item Suppose that $X$ is finitely presented over a quasi-separated algebraic space $S$, has affine stabilizers and separated diagonal. Then $q$ defines a good moduli space if and only if it defines a $\mathcal{U}\sA$-good moduli space.
        \item Suppose that $X$ is Noetherian and has quasi-affine diagonal and that $Y$ is quasi-separated. Then if $q$ defines an $\sA$-good moduli space for $X$, it also defines a good moduli space for $X$.
    \end{enumerate}
\end{proposition}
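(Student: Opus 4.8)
The plan is to derive all three statements from Proposition~\ref{Prop:ucd0_props}, together with one elementary remark: \emph{if $X$ is classical and $q$ is \cd, then $q_\ast\oO_X$ is discrete}. Indeed, $\oO_X\in\QCoh(X)^{\heartsuit}$ because $X$ is classical, so being \cd forces $q_\ast\oO_X$ to be connective, while left $t$-exactness of $q_\ast$ forces it to be coconnective; hence $q_\ast\oO_X\in\QCoh(Y)^{\heartsuit}$ and $q_\ast^{\heartsuit}\oO_X=\tau_{\geq 0}q_\ast\oO_X=q_\ast\oO_X$. Since $\oO_Y$ is discrete, the unit $\oO_Y\to q_\ast\oO_X$ is an equivalence if and only if the unit $\oO_Y\to q_\ast^{\heartsuit}\oO_X$ is. Thus, as soon as $q$ is known to be \cd, condition~(ii) of Definition~\ref{def:gms} and Alper's $\oO$-connectedness condition $\oO_Y\simeq q_\ast^{\heartsuit}\oO_X$ say the same thing.

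\emph{Parts (i) and (iii).} A good moduli space $q$ is qcqs and \ucd, so it is universally cohomologically affine by Proposition~\ref{Prop:ucd0_props}(i) and in particular \cd; by the remark above it then satisfies $\oO_Y\simeq q_\ast^{\heartsuit}\oO_X$, so it is a $\mathcal{U}\sA$-good, hence an $\sA$-good, moduli space. This gives~(i). For~(iii), the quasi-separated algebraic space $Y$ has quasi-affine diagonal by Example~\ref{Ex:algsp_quasiaff}; together with the hypotheses that $X$ has quasi-affine diagonal and $X_{\cl}=X$ is Noetherian, and that $q$ is cohomologically affine, Proposition~\ref{Prop:ucd0_props}(iii) shows $q$ is \ucd, in particular \cd. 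The remark then upgrades the $\sA$-good condition $\oO_Y\simeq q_\ast^{\heartsuit}\oO_X$ to condition~(ii) of Definition~\ref{def:gms}; as $q$ is qcqs and $Y$ is an algebraic space, $q$ is a good moduli space for $X$.

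\emph{Part (ii).} The ``only if'' direction is Part~(i). For ``if'', let $q$ be a $\mathcal{U}\sA$-good moduli space; it is qcqs, and by the remark above it suffices to prove that $q$ is \ucd. By definition this means that $q_A\colon X_A\to\Spec A$ is \cd for every $\Spec A\to Y$, and each such $q_A$ is cohomologically affine by base change, so we are reduced to upgrading cohomological affineness to \cd for each $q_A$ --- which is what Proposition~\ref{Prop:ucd0_props}(ii) does, provided $(X_A,\Spec A)$ lies in its scope. This is where the standing hypotheses enter: finite presentation lets us replace $S$ by a Noetherian approximation and descend $q$ accordingly (being \ucd being stable under base change by Lemma~\ref{Lem:ucd0_props}), so that we may assume $X_{\cl}$ is Noetherian; and ``affine stabilizers'' together with ``separated diagonal'' forces $X$ to have quasi-affine diagonal, hence to be affine-pointed, both properties passing to the $X_A$ since $Y$ has quasi-affine diagonal, while $\Spec A$ is trivially affine-pointed. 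With these in hand Proposition~\ref{Prop:ucd0_props}(ii) (or~(iii), using the quasi-affine diagonals of $X_A$ and $\Spec A$) applies, $q$ is \ucd, and the $\oO$-condition transports as above.

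\emph{Main obstacle.} Parts (i) and~(iii) are immediate from Proposition~\ref{Prop:ucd0_props} once the discreteness remark is recorded. The real work is the ``if'' half of Part~(ii): verifying that the hypotheses (finitely presented, affine stabilizers, separated diagonal) genuinely place $X$ and all of its base changes $X_A$ within the hypotheses of Proposition~\ref{Prop:ucd0_props}(ii)/(iii) --- that is, establishing affine-pointedness (equivalently, quasi-affine diagonality) and carrying out the Noetherian reduction compatibly with cohomological affineness. The rest is bookkeeping.
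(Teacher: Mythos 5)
Your parts (i) and (iii), together with the preliminary observation that for classical $X$ a \cd morphism $q$ has discrete $q_\ast\oO_X$ (so that condition (ii) of Definition~\ref{def:gms} and Alper's condition $\oO_Y \simeq q_\ast^{\heartsuit}\oO_X$ say the same thing), are correct and follow the paper's proof essentially verbatim: (i) is Proposition~\ref{Prop:ucd0_props}(i) plus that observation, and (iii) is Proposition~\ref{Prop:ucd0_props}(iii) plus Proposition~\ref{Prop:quasi-aff_cd_ucd_$t$-exact}, using that a quasi-separated algebraic space has quasi-affine diagonal.

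The ``if'' direction of part (ii) has a genuine gap. It rests on the assertion that affine stabilizers together with separated diagonal force $X$ to have quasi-affine diagonal; this is unjustified and not true in general --- quasi-affine diagonal is strictly stronger than having affine stabilizers and separated diagonal, and bridging that gap is precisely the hard content of this part of the statement. The paper does not attempt any such formal implication: it invokes \cite[Theorem~6.1]{AHR2}, a substantial theorem resting on the \'etale local structure theory, which says that under exactly these hypotheses a $\mathcal{U}\sA$-good moduli space morphism $q$ automatically has \emph{affine} diagonal; Proposition~\ref{Prop:ucd0_props}(iv) then yields at once that $q$ is \ucd, with no Noetherian approximation and no affine-pointedness of $X$ required. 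Your fallback route through Proposition~\ref{Prop:ucd0_props}(ii)/(iii) cannot be run without first establishing quasi-affine diagonal or affine-pointedness of $X$ (and of all base changes $X_A$), and the Noetherian reduction you sketch is asserted rather than carried out. Neither is ``bookkeeping'': the missing input is exactly the cited theorem, and note that its crucial hypothesis is the existence of the good moduli space itself --- the affineness of the diagonal is a consequence of $q$ being a $\mathcal{U}\sA$-good moduli space, not of the geometric hypotheses on $X$ alone.
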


\begin{proof}
    If $q$ defines a good moduli space for $X$, then $q$ is universally cohomologically affine by Proposition~\ref{Prop:ucd0_props}(i) and $q_\ast^\heartsuit \oO_X \simeq q_\ast \oO_X \simeq \oO_Y$ under the natural morphism. This shows part (i).

    For part (ii), suppose that $q$ defines a $\mathcal{U}\sA$-good moduli space for $X$. Then by~\cite[Theorem~6.1]{AHR2}, $q$ has affine diagonal and thus is \ucd by Proposition~\ref{Prop:ucd0_props}(iv). It follows that $q$ defines a good moduli space.

    For part (iii), if $X$ is Noetherian with quasi-affine diagonal, $q_\ast^\heartsuit$ is exact, and $Y$ is quasi-separated, then Proposition~\ref{Prop:ucd0_props}(iii) implies that $q$ is \ucd and Proposition~\ref{Prop:quasi-aff_cd_ucd_$t$-exact} implies that $q_*$ is $t$-exact (recall that $Y$ has quasi-affine diagonal since it is a quasi-separated algebraic space). Thus, we also have $q_\ast^\heartsuit \simeq q_\ast$ on $\QCoh(X)^{\heartsuit}$. It follows that $q$ defines a good moduli space for $X$.
\end{proof}

The following proposition shows that the good moduli space of a classical Artin stack must be classical. In particular, under the conditions of part (iii) in the previous proposition, our definition of good moduli space is identical to the one given by Alper.

\begin{proposition} \label{Prop:Classical stack has classical GMS}
    If $X$ is a classical Artin stack with a good moduli space $q \colon X \to Y$, then $Y$ is classical. 
\end{proposition}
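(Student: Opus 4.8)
The plan is to show that the structure sheaf $\oO_Y$ is already discrete, i.e., concentrated in degree $0$, which—since $Y$ is a derived algebraic space—forces $Y$ to be classical. We are given that $q \colon X \to Y$ is a good moduli space with $X$ classical, so by condition (ii) we have $\oO_Y \simeq q_\ast \oO_X$. Since $X$ is classical, $\oO_X$ is discrete, so $\oO_X \in \QCoh(X)^{\heartsuit}$. Now I invoke condition (i): $q$ is \ucd, hence in particular \cd, so by definition $q_\ast M$ is connective (and discrete) for every $M \in \QCoh(X)^{\heartsuit}$. Applying this to $M = \oO_X$ gives that $q_\ast \oO_X$ is discrete, and therefore $\oO_Y \simeq q_\ast \oO_X$ is discrete.

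The remaining step is to deduce that $Y$ is classical from the fact that its structure sheaf is discrete. For a derived algebraic space (or more generally a stack admitting a smooth atlas), this is standard: pick a smooth surjection $\Spec B \to Y$ from a derived affine scheme; then pulling back $\oO_Y$ along this atlas recovers $B$ (up to the relevant connective truncation, but smoothness and flatness of the atlas mean the pullback is just $B$ itself), and discreteness of $\oO_Y$ translates into $\pi_i B = 0$ for $i > 0$, i.e., $B$ is classical. Hence the atlas is classical, and so $Y = Y_\cl$. Alternatively, one can argue directly: $Y \simeq \Spec_Y \oO_Y$ for an algebraic space, and $\Spec_Y$ of a discrete sheaf of rings is a classical algebraic space.

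I do not expect any serious obstacle here; the argument is essentially a one-line consequence of combining the two defining conditions of a good moduli space with the hypothesis that $X$ is classical. The only point requiring a modicum of care is the passage from "$\oO_Y$ is discrete" to "$Y$ is classical," which should be spelled out by reference to a smooth atlas $\Spec B \to Y$: since the atlas is flat over $Y$, the module $\oO_Y$ pulls back to $B$, so $B$ is discrete, whence $\Spec B$—and therefore $Y$—is classical. If one wants to avoid even this, one can cite the general fact that a derived algebraic space with discrete structure sheaf is classical (equivalently, the truncation map $Y_\cl \to Y$ is an equivalence, which can be checked on the atlas).
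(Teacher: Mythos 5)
Your argument is correct and matches the paper's proof: both deduce from the $t$-exactness of $q_\ast$ (equivalently, $q$ being of cohomological dimension zero) and $\oO_Y \simeq q_\ast\oO_X$ with $\oO_X$ discrete that $\pi_i(\oO_Y) \simeq 0$ for $i>0$, hence $Y$ is classical. The extra care you take in passing from ``$\oO_Y$ is discrete'' to ``$Y$ is classical'' via a smooth atlas is a valid elaboration of a step the paper leaves implicit.
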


\begin{proof}
    Since $q_*$ is $t$-exact and $\oO_Y \simeq q_\ast \oO_X$, it follows that $\pi_i(\oO_Y) \simeq 0$ for all $i>0$ and hence $Y$ is classical.
\end{proof}

\subsection{A discussion of different properties of morphisms and notions of good moduli space} \label{subsection:different notions of morphisms and gms}

It is evident that the definitions of good moduli space, $\mathcal{U}\sA$-good moduli space and $\sA$-good moduli space essentially differ in the property we require the morphism $q \colon X \to Y$ to satisfy, namely being \ucd, universally cohomologically affine and cohomologically affine respectively. We thus think it is fruitful to include a short discussion and examples on these properties and their relations. We thank David Rydh for helpful conversations that motivated this subsection.

Let $f \colon X \to Y$ be a qcqs morphism of Artin stacks. By Proposition~\ref{Prop:ucd0_props}, if $f$ has affine diagonal, then being \ucd is equivalent to being universally cohomologically affine. Under further assumptions, Proposition~\ref{Prop:ucd0_props} gives converse statements to this equivalence as well. The following examples show the departure between the different notions in the absence of affine diagonal.

\begin{example}
\label{example:gms1.2}
    We return to Example~\ref{example:gms1}, so we take $f \colon * \to BA$ the universal torsor for $A$ a classical abelian variety of dimension $\geq 1$. If $f_*\CC \simeq \oO_{BA}$ then again by base change we would have $\CC \simeq \Gamma(A,\oO_A)$. Since $\Gamma(A,\oO_A)$ is non-discrete, we conclude that $\oO_{BA} \to f_*\CC$ is not invertible, even though it is invertible on $\pi_0$.
\end{example}

\begin{example}
\label{example:gms2.2}
    We return to Example~\ref{example:gms2}, so we take $g \colon BA \to *$ the structure map. We claim that also in this case it holds that $\CC \to g_* \oO_{BA}$ is not invertible, even though it is invertible on $\pi_0$. To see this, take any nontrivial extension
    \[
        \Ga \to E \to A
    \]
    of $A$ by $\Ga$ (such an extension exists, for example by \cite[Thm.~3]{rosenlicht1958ExtensionsVectorGroups}). Taking deloopings gives a nontrivial map
    \[
        c \colon BA \to B^2\Ga.
    \]
    Recall that in general one has
    \[
        \Map(X,B^n\Ga) \simeq \tau_{\geq 0}(\Gamma(X,\oO_X)[n]).
    \]
    Hence
    \[
        0 \neq c \in \pi_0 \Map(BA,B^2\Ga) \simeq \pi_{-2}g_*\oO_{BA}.
    \]
\end{example}

\begin{example}
    \label{example:gms3.2}
    We return to Example~\ref{example:gms3}, so we take $E(A)$ the universal vector extension of an abelian variety $A$. Since $E(A)$ is anti-affine it holds that $\QCoh(E(A))^\heartsuit \simeq \Mod_\CC^{\heartsuit}$ by \cite[Lem.~1.1, 2.3.(i)]{rosenlicht1958ExtensionsVectorGroups} and the same argument as in Example~\ref{example:gms1}. Hence for $\varphi \colon * \to E(A)$ the universal torsor it holds that $\varphi_*^\heartsuit$ is invertible. Since $\varphi_*$ is \cd we conclude $\varphi_*\CC \simeq \oO_{BE(A)}$. Hence, also for the projection $\psi \colon BE(A) \to *$ it holds that $\psi_* \oO_{BE(A)} \simeq \CC$ since $\psi \varphi \simeq \id$.
\end{example}

Examples~\ref{example:gms1},\ref{example:gms1.2} show that being cohomologically affine is not the best-suited notion to demand for moduli space morphisms. While this was the case in the original paper~\cite{AlperGood}, the subsequent papers~\cite{AHR2,AlperLuna,FerrandPush} have instead relied on the notion of universally cohomological affineness. This is also consistent with the slogan that most well-behaved properties in algebraic geometry are invariant under base-change.

Similarly, Examples~\ref{example:gms3}, \ref{example:gms3.2} show that a reasonable definition of good moduli space should use a property that is invariant under base-change, such as \ucd. Examples~\ref{example:gms4}, \ref{example:gms3.2} then imply that using \ucd morphisms still allows for non-affine stabilizers.

Example~\ref{example:gms2} illustrates the difference between being universally cohomologically affine and \cd or \ucd, which one might argue is an interesting feature that motivates our choice to use \ucd to define good moduli spaces.
\smallskip

When the target $Y$ is a quasi-separated algebraic space and $q$ has affine diagonal, then these subtle differences disappear and all four notions coincide. In particular, this implies that our definition is consistent with the original definition of Alper~\cite{AlperGood} and its subsequent strengthening in~\cite{AHR2,AlperLuna,FerrandPush}. However, for a general good moduli space morphism and to get a robust theory, it is important to use a well-behaved notion, which for us is being \ucd.

\subsection{Properties of good moduli spaces: base-change and passage between classical and derived} We now lay the ground to show that most properties of good moduli spaces carry over from classical stacks to derived stacks. We first check that good moduli spaces behave well under base-change, which we then use to show that having a good moduli space is a property that only depends on the  underlying classical stack, one of the essential features of our definition.

\begin{lemma}
\label{Lem:GMS_base-change}
    Good moduli space morphisms are stable under base-change.
\end{lemma}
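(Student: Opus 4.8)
The plan is to verify the two defining conditions of Definition~\ref{def:gms} for the base-change $q' \colon X' \to Y'$ of a good moduli space morphism $q \colon X \to Y$ along an arbitrary morphism $g \colon Y' \to Y$ of Artin stacks, sitting in a Cartesian square with top arrow $g' \colon X' \to X$. First, $q'$ is qcqs since qcqs morphisms are stable under base-change (this is a property of the classical truncation, and $(-)_\cl$ commutes with fiber products up to the relevant classical fiber product). Condition~(i), that $q'$ is \ucd, is immediate: being \ucd is stable under base-change by Lemma~\ref{Lem:ucd0_props}.

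The only real content is condition~(ii): that the unit map $\oO_{Y'} \to q'_* \oO_{X'}$ is an equivalence. Here I would invoke the base-change formula. Since $q$ is qcqs and \ucd, Lemma~\ref{Lem:ucd0_props} tells us that $q$ satisfies the base-change formula, so the canonical map $g^* q_* \oO_X \to q'_* (g')^* \oO_X$ is an equivalence. Now $(g')^* \oO_X \simeq \oO_{X'}$ (pullback preserves the structure sheaf), and $g^* \oO_Y \simeq \oO_{Y'}$ likewise, so applying $g^*$ to the equivalence $\oO_Y \xrightarrow{\ \sim\ } q_* \oO_X$ from condition~(ii) for $q$ and composing with base-change gives
\[
\oO_{Y'} \simeq g^* \oO_Y \xrightarrow{\ \sim\ } g^* q_* \oO_X \xrightarrow{\ \sim\ } q'_* (g')^*\oO_X \simeq q'_* \oO_{X'}.
\]
One should check that this composite is indeed the unit map for $q'$, which follows from the compatibility of the base-change transformation with the units of the respective $(f^*\dashv f_*)$ adjunctions (a standard naturality diagram chase). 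This completes the verification, and hence $q'$ is a good moduli space morphism.

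The main (and only) obstacle is ensuring the base-change formula genuinely applies: this rests on $q$ being qcqs and \ucd, which is exactly what Lemma~\ref{Lem:ucd0_props} provides, so there is no real difficulty — the lemma is essentially a formal consequence of the two cited results on \ucd morphisms together with the base-change formula. If one wanted $Y'$ to be allowed to be a general stack (not just Artin), one would need the base-change formula in that generality; but since all our morphisms are between Artin stacks this is not an issue here.
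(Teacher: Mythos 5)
Your proof is correct and follows essentially the same route as the paper: \ucd and qcqs are stable under base-change, and the base-change formula from Lemma~\ref{Lem:ucd0_props} gives $q'_*\oO_{X'} \simeq q'_*(g')^*\oO_X \simeq g^*q_*\oO_X \simeq g^*\oO_Y \simeq \oO_{Y'}$. The extra remark about checking compatibility with the unit is a reasonable bit of care that the paper leaves implicit.
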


\begin{proof}
    Consider a Cartesian square
    \begin{center}
        \begin{tikzcd}
            X' \arrow[r, "f"] \arrow[d, "q'"] & X \arrow[d, "q"] \\
            Y' \arrow[r, "g"] & Y
        \end{tikzcd}
    \end{center}
    where $q$ is a good moduli space morphism. It is clear that $q'$ is ucd\textsubscript{0}. By base-change, Lemma \ref{Lem:ucd0_props}, we have
    \[ q'_*\oO_{X'} \simeq q'_*f^*\oO_X \simeq g^*q_*\oO_X \simeq g^*\oO_Y \simeq \oO_{Y'}. \qedhere \] 
\end{proof}

We move on to one of the main results of the present paper.

\begin{theorem}
\label{Thm:GMSvsGMScl}
Let $X$ be an Artin stack. 
\begin{enumerate}
    \item If $X_\cl$ admits a good moduli space $q_\cl \colon X_\cl \to Y_\cl$, then $X$ admits a good moduli space $q' \colon X \to Y$ such that $q'_\cl \simeq q_\cl$.
    \item If $X$ admits a good moduli space $q\colon X \to Y$, then $q_\cl$ is a good moduli space for $X_\cl$.
\end{enumerate}
\end{theorem}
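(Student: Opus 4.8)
The plan is to prove the two directions separately, in each case reducing to the classical theory of Alper via the comparison results already established (Proposition~\ref{Prop:comparison between Alper and derived GMS}) and the connectivity/pushforward toolkit of Section~\ref{sec:2}.

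For part (ii), the easy direction: suppose $q \colon X \to Y$ is a derived good moduli space. First I would check that $q_\cl \colon X_\cl \to Y_\cl$ is \ucd using Remark~\ref{Rem:coh_dim_cl}, which says being \ucd only depends on the underlying classical morphism; this is immediate once we know $Y_\cl \to Y$ and $X_\cl \to X$ interact correctly with the definition. By Proposition~\ref{Prop:Classical stack has classical GMS}, when $X$ is classical $Y$ is already classical, but in general $X_\cl \hookrightarrow X$ and we must identify the good moduli space of $X_\cl$: the natural candidate is $Y_\cl$. The content is to verify the $\oO$-connectedness condition $\oO_{Y_\cl} \xrightarrow{\sim} (q_\cl)_\ast \oO_{X_\cl}$. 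For this I would use that pushing forward along a closed immersion $Z_\cl \to Z$ induces an equivalence on hearts of $\QCoh$ (used in Remark~\ref{Rem:coh_aff_cl}), together with the $t$-exactness of $q_\ast$ (which holds since $q$ is \ucd, by Proposition~\ref{Prop:ucd0_equivalent}) to compute $\pi_0(q_\ast \oO_X) = (q_\cl)^\heartsuit_\ast \oO_{X_\cl}$; combined with $q$ being \cd this gives $q_\ast \oO_X$ discrete, so $\oO_{Y_\cl} = \pi_0 \oO_Y = \pi_0(q_\ast\oO_X) = (q_\cl)_\ast \oO_{X_\cl}$. Then one applies Proposition~\ref{Prop:comparison between Alper and derived GMS}(i) restricted to the classical world, or just observes directly that the two conditions of an $\sA$-good moduli space are met; but we want the \emph{derived} good moduli space condition for $X_\cl$, which is exactly the pair (\cd, $\oO$-connected) we have just verified.

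For part (i), the harder direction: given a classical good moduli space $q_\cl \colon X_\cl \to Y_\cl$, I would set $Y \coloneqq Y_\cl$ (a classical, hence derived, algebraic space) and construct the morphism $q' \colon X \to Y$ via the factorization machinery of Remark~\ref{Rem:noncon_push}: namely $q'$ should be the composite $X \to \Spec_Y \tau_{\geq 0}(q')_\ast \oO_X \to Y$, but more directly one wants to produce the map $X \to Y$ lifting $q_\cl$. The natural approach is: $q_\cl$ corresponds to a map $\oO_{Y_\cl} \to (q_\cl)_\ast\oO_{X_\cl}$ exhibiting $\oO_{Y_\cl}$ as the pushforward; since $Y = Y_\cl$ is classical, a map $X \to Y$ is the same as a map of stacks, and I would construct it by descent — cover $Y_\cl$ by affines $\Spec A$, pull back to get $X_A \to \Spec A$, and use that $X_A$ is (by the classical theory) cohomologically affine over $\Spec A$ with $A \simeq \Gamma(X_{\cl,A}, \oO)$; then lift to $A \to \tau_{\geq 0}\Gamma(X_A, \oO_{X_A})$ and check this is an equivalence. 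The key step is precisely showing $\oO_Y \to q'_\ast \oO_X$ is an equivalence, i.e.\ $q'_\ast \oO_X$ is discrete (equal to $\oO_{Y_\cl}$), and that $q'$ is \ucd. For \ucd: by Remark~\ref{Rem:coh_dim_cl} being \ucd depends only on $(q')_\cl = q_\cl$, and $q_\cl$ is cohomologically affine; to upgrade to \cd/\ucd I would invoke Proposition~\ref{Prop:ucd0_props}(ii) or (iii), which requires hypotheses like affine-pointedness with $X_\cl$ Noetherian, or affine diagonal — so I expect the precise statement in the main text carries such assumptions (the introduction flagged ``natural assumptions''). Granting \ucd of $q'$, $q'_\ast$ is $t$-exact, so $\pi_0(q'_\ast \oO_X) = (q_\cl)^\heartsuit_\ast \oO_{X_\cl} = \oO_{Y_\cl}$ and $\pi_i(q'_\ast\oO_X) = 0$ for $i<0$ by left $t$-exactness; for $i>0$ one needs vanishing of higher direct images of $\oO_X$, which follows from $q'$ being \cd once one knows $q'_\ast$ is $t$-exact applied to the discrete module... actually the cleanest route is: $\oO_X \in \QCoh(X)^\heartsuit$ since $X$ need not be classical — wait, it isn't. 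So instead filter $\oO_X$ by its Postnikov tower $\pi_i(\oO_X)$, push forward each (discrete) piece to get something discrete by \cd, and conclude $q'_\ast \oO_X$ has homotopy concentrated appropriately; but $\pi_i(\oO_X)$ for $i>0$ may be nonzero, and their pushforwards land in degree $0$, potentially contributing to $\pi_i(q'_\ast\oO_X)$ — here is where the real subtlety lies.

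The step I expect to be the main obstacle is establishing $\oO_Y \xrightarrow{\sim} q'_\ast\oO_X$ in part (i) when $X$ is genuinely derived: one must control the pushforward of the higher homotopy sheaves $\pi_i(\oO_X)$, $i>0$, and show they do not disturb the identification $q'_\ast\oO_X \simeq \oO_{Y_\cl}$. I would handle this by working affine-locally on $Y_\cl$, reducing to the statement that for $X_A \to \Spec A$ with $X_A$ derived and cohomologically affine over $A$, one has $\Gamma(X_A, \oO_{X_A}) \simeq A$ — which should follow from the classical fact $\Gamma(X_{\cl,A}, \oO) = A$ together with $t$-exactness of $(q_A)_\ast$ and a spectral sequence / Postnikov argument showing the higher homotopy of $\oO_{X_A}$ pushes forward to $0$, possibly requiring that $X$ itself has no higher structure, i.e.\ is eventually classical after all, or that one simply \emph{defines} $Y$ so that this holds. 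In fact the slickest formulation is likely: take $Y \coloneqq Y_\cl$, define $q'$ as the canonical map $X \to \Spec_Y \tau_{\geq 0}q''_\ast\oO_X$ composed appropriately where $q'' = q_\cl$ precomposed with $X \to X_\cl$... I would therefore organize the proof to first produce $q'$ abstractly, then verify (\ucd) via the classical reduction and Proposition~\ref{Prop:ucd0_props}, and finally verify ($\oO$-connectedness) via affine-local Postnikov analysis, isolating the vanishing of $q'_\ast \pi_i(\oO_X)$ in positive degrees as the crux.
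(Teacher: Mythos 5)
Your part (ii) is essentially the paper's argument: use $t$-exactness of $q_*$ (Remark~\ref{rem:2.4}), the equivalences on hearts induced by the closed immersions $X_\cl \to X$ and $Y_\cl \to Y$, and the chain $\pi_0(\oO_Y) \simeq \pi_0(q_*\oO_X) \simeq q_*\pi_0(\oO_X) \simeq q_*k_*\oO_{X_\cl}$. One slip: $q_*\oO_X \simeq \oO_Y$ need not be discrete, since $Y$ need not be classical; only the $\pi_0$-identification is needed, and that is what your displayed chain actually uses.

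Part (i) has a genuine gap, and you have located it yourself without resolving it. Setting $Y \coloneqq Y_\cl$ is wrong: the condition $\oO_Y \xrightarrow{\sim} q'_*\oO_X$ together with $t$-exactness of $q'_*$ forces $\pi_i(\oO_Y) \simeq q_*\pi_i(\oO_X)$ for all $i$, and these are nonzero in positive degrees whenever $X$ is genuinely derived --- e.g.\ in Example~\ref{example:moduli of sheaves} the good moduli space $\Spec A^G$ is a nontrivial derived thickening of $\Sym^2(\AA^2_\CC)$. So the ``crux'' you isolate, vanishing of $q'_*\pi_i(\oO_X)$ for $i>0$, is simply false, and no Postnikov or spectral-sequence argument will deliver it. The correct move --- which you gesture at (``one simply defines $Y$ so that this holds'') but do not carry out --- is to build $Y$ as a derived thickening of $Y_\cl$ simultaneously with $q'$: the paper inducts over the Postnikov tower $X_\cl = X_{\leq 0} \to X_{\leq 1} \to \cdots$, showing that if $X'$ is a square-zero extension of $X$ by $M$ classified by $\alpha \colon \BL_X \to M$, then the square-zero extension $Y'$ of $Y$ classified by $\BL_Y \to q_*\BL_X \xrightarrow{q_*\alpha} q_*M$ receives a morphism $q' \colon X' \to Y'$ (constructed from the naturality of the counit $q^*q_*M \to M$ and the pushout presentation of square-zero extensions), which is again a good moduli space. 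Your alternative sketch via ``$q_\cl$ precomposed with $X \to X_\cl$'' is not available either --- the closed immersion goes $X_\cl \to X$ --- and defining $Y$ as a relative $\Spec$ of $q'_*\oO_X$ over $Y_\cl$ presupposes the map $q'$ you are trying to construct. Finally, the extra hypotheses you anticipate (Noetherian, affine or quasi-affine diagonal) for upgrading to \ucd are not needed: the hypothesis of (i) is that $q_\cl$ is a good moduli space in the sense of Definition~\ref{def:gms}, hence already \ucd, and this transfers to $q'$ by Remark~\ref{Rem:coh_dim_cl} (or, as in the paper's inductive step, via $j_*q_* \simeq q'_*i_*$ and the heart equivalences).
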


\begin{proof}
We first assume that $X_\cl$ admits a good moduli space $q_\cl \colon X_\cl \to Y_\cl$. Note that by Proposition~\ref{Prop:Classical stack has classical GMS}, $Y_\cl$ must be classical so the statement is well-defined. By using the Postnikov tower of $X$, which exhibits $X$ as the colimit of a sequence of square-zero extensions 
\begin{align*} 
X_{\leq 0} = X_\cl \lr X_{\leq 1} \lr X_{\leq 2} \lr \ldots \lr X,
\end{align*}
it now suffices to prove that if $X$ is a stack with good moduli space $q \colon X \to Y$, and $X'$ is a square-zero extension of $X$, then $X'$ admits a natural good moduli space $q' \colon X' \to Y'$, fitting in a commutative diagram
\begin{align} \label{eq:3.5}
    \xymatrix{
    X \ar[r]^-{i} \ar[d]_-{q} & X' \ar[d]^-{q'} \\
    Y \ar[r]_-{j} & Y',
    }
\end{align}
where $Y'$ is a square-zero extension of $Y$ as well.

We thus take $X'$ to be a square-zero extension of $X$ by a quasi-coherent $\oO_{X}$-module $M \in \QCoh(X)_{[d,d]}$, determined by a morphism
\begin{align} \label{eq:3.1}
    \alpha \colon \BL_{X} \lr M.
\end{align} 
in $\QCoh(X)$.

Note that we take $M$ here to fit in an exact triangle $\oO_{X'} \to \oO_X \to M$ (which is a shift of the usual way to express square-zero extensions).

By definition and the universal property of $\BL_X$, $\alpha$ is naturally equivalent to a morphism $\tilde{\alpha} \colon X[M] \to X$ and $X'$ is defined by the pushout diagram
\begin{align} \label{eq:3.2}
    \xymatrix{
    X[M] \ar[r]^-{\pi} \ar[d]_-{\tilde{\alpha}} & X \ar[d] \\
    X \ar[r] & X',
    }
\end{align}
where $\pi$ denotes the projection map.

Since $q$ is ucd$_0$, we have $q_\ast M \in \QCoh(Y)_{[d,d]}$ and we obtain a morphism
\begin{align}
    \beta \colon \BL_Y \lr q_\ast \BL_X \xrightarrow{\ q_\ast \alpha \ } q_\ast M
\end{align}
in $\QCoh(Y)$, which corresponds to a square-zero extension $Y \to Y'$.

Similarly to the above, $\beta$ in turn corresponds to a morphism $\tilde{\beta} \colon Y[q_\ast M] \to Y$ and $Y'$ is defined by the pushout diagram
\begin{align} \label{eq:3.4}
    \xymatrix{
    Y[q_\ast M] \ar[r]^-{\pi} \ar[d]_-{\tilde{\beta}} & Y \ar[d] \\
    Y \ar[r] & Y',
    }
\end{align}
in which $\pi$ again, by abuse of notation, denotes the projection map.

The counit $\epsilon \colon q^*q_*M \to M$ induces a morphism 
\[ \tilde{q} \colon X[M] \to X[q^*q_*M] \simeq X \times_{q,Y,\pi} Y[q_*M] \to Y[q_*M] \]
 compatible with the projection maps $\pi$. 
 
 Now $q \circ \tilde{\alpha}$ is determined by the morphism
 \[ q^*\BL_Y \to \BL_X \xrightarrow{\alpha} M; \]
meanwhile $\tilde{\beta}\circ \tilde{q}$ is determined by the morphism
\[ q^*\BL_Y \to q^*q_*\BL_X \xrightarrow{q^*q_* \alpha} q^*q_*M \xrightarrow{\epsilon} M. \]
By naturality of the counit, it follows that $q \circ \tilde{\alpha} \simeq \tilde{\beta} \circ \tilde{q}$, hence that $q$ induces a morphism from the pushout square~\eqref{eq:3.2} to the pushout square~\eqref{eq:3.4}, and thus a morphism $q' \colon X' \to Y'$ fitting in a commutative diagram~\eqref{eq:3.5}. We claim that $q'$ is the desired good moduli space for $X'$.

Firstly, since $i \colon X \to X'$ is affine, $i_\ast^{\heartsuit}$ is simply the restriction $i_\ast \colon \QCoh(X)^{\heartsuit} \to \QCoh(X')^{\heartsuit}$, which fits into the commutative square
\begin{align*}
    \xymatrix{
    \QCoh(X_\cl)^{\heartsuit} \ar[r]^-{(i_\cl)_\ast} \ar[d]_-{k_\ast} & \QCoh(X'_\cl)^{\heartsuit} \ar[d]^-{k'_\ast} \\
    \QCoh(X)^{\heartsuit} \ar[r]^-{i_\ast} & \QCoh(X')^{\heartsuit} }
\end{align*}
where $k \colon X_\cl \to X,\ k' \colon X_\cl' \to X'$ denote the natural closed embeddings. Since $i_\cl$ is an equivalence, and, by definition, the same holds for $k_\ast^{\heartsuit}$ and $k_\ast'^{\heartsuit}$, the same is true for $i_\ast^{\heartsuit}$. By the same argument, $j_\ast$ is also an equivalence on the heart, for $j \colon Y \to Y'$. 

To show that $q'$ is \ucd, we may assume that $Y'$ is affine by Proposition \ref{Prop:ucd0_equivalent}. Since $j \circ q \simeq q' \circ i$ by diagram~\eqref{eq:3.5}, we get $j_\ast \circ q_\ast \simeq q'_\ast \circ i_\ast$, which by Remark \ref{rem:2.4} implies that $q'_*$ is $t$-exact, since $q_*$ is $t$-exact. Hence $q'$ is \ucd by Proposition \ref{Prop:quasi-aff_cd_ucd_$t$-exact}. 

Finally, we have a commutative diagram of distinguished triangles
\begin{align*}
    \xymatrix{
    j_\ast q_\ast M[-1] \ar[r] \ar[d] & \oO_{Y'} \ar[r] \ar[d] & j_\ast \oO_Y \simeq j_\ast q_\ast \oO_X \ar[d] \\
    q'_\ast i_\ast M[-1] \ar[r] & q'_\ast \oO_{X'} \ar[r] & q'_\ast i_\ast \oO_X.
    }
\end{align*}

Again, the equivalence $j \circ q \simeq q' \circ i$ by diagram~\eqref{eq:3.5} implies that the outer two vertical arrows are equivalences and hence the same is true for the middle arrow. This concludes (i).

Conversely, suppose that $q \colon X \to Y$ is a good moduli space morphism with $Y$ an algebraic space. By Remark \ref{Rem:coh_dim_cl}, $q_\cl$ is \ucd. Thus, to show that $q_\cl$ is a good moduli space morphism, we only need to check that the natural morphism $\oO_{Y_\cl} \to (q_{\cl})_\ast^{\heartsuit} \oO_{X_\cl}$ is an equivalence. Observe that we have a natural equivalence $(q_\cl)_\ast^{\heartsuit} \oO_{X_\cl} \simeq (q_\cl)_\ast \oO_{X_\cl}$ since $(q_\cl)_*$ is $t$-exact by Remark \ref{rem:2.4}.

Let $k \colon X_\cl \to X$ and $\ell \colon Y_\cl \to Y$ denote the natural closed embeddings so that $q \circ k \simeq \ell \circ q_\cl$. Using the $t$-exactness of $q_\ast$, we have
\begin{align*}
    \ell_\ast \oO_{Y_\cl} \simeq \pi_0(\oO_Y) \simeq \pi_0 (q_\ast \oO_X) \simeq q_\ast \pi_0(\oO_X) \simeq q_\ast k_\ast \oO_{X_\cl} \simeq \ell_\ast (q_\cl)_\ast \oO_{X_\cl}.
\end{align*}
Since $\ell_\ast$ is an equivalence on the heart, this concludes the proof.
\end{proof}

\begin{remark}
    The preceding theorem is consistent with the usual analogy between the closed embedding $X_\cl \to X$ for a derived stack $X$ and the closed embedding $X_{\mathrm{red}} \to X$ of the canonical reduced substack of a classical algebraic stack $X$. Namely, under the additional assumption that the good moduli space morphism is \cd, \cite[Corollary~5.7]{AlperLocalProp} implies that a classical stack $X$ admits an $\sA$-good moduli space if and only if $X_{\mathrm{red}}$ does. An alternative way to see this when $X$ is Noetherian is to apply the argument in the preceding proof to the sequence of square-zero closed embeddings $X_{\mathrm{red}} = X_0 \to X_1 \to \ldots \to X_N = X$, where $X_n = \Spec_X ({\oO_X / I^n})$, $I$ is the ideal sheaf of $X_{\mathrm{red}}$ in $X$ and $N$ is large enough. Hence, it should not come as a surprise that the same holds for a derived stack $X$ and its classical truncation $X_\cl$.
\end{remark}

\subsection{Properties of good moduli spaces: universality} Before showing universality of good moduli spaces, we need the following lemma.

\begin{lemma}
\label{Lem:s0-pushout}
    Let $q \colon X \to Y$ be a morphism of Artin stacks, and $M \in \QCoh(X)_{\geq 0}$. Then the commutative square
    \begin{center}
        \begin{tikzcd}
            X[M] \arrow[d, "\tilde{q}"] \arrow[r, "\pi"] & X  \arrow[d, "q"] \\
            Y[q_*M] \arrow[r, "\pi"] & Y
        \end{tikzcd}
    \end{center}
    where $\pi$ denotes the projections, is a pushout.
\end{lemma}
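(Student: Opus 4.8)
The plan is to realize the square as a pushout in $\Stk$ and then compute that pushout by reducing to the affine case. First, the square commutes: the map $\tilde q$ is built from the counit $q^{*}q_{*}M\to M$ exactly as in the construction of $\tilde q$ preceding the commutative diagram in the proof of Theorem~\ref{Thm:GMSvsGMScl} (with the derivation there set to $0$), and the required homotopy $q\circ\pi\simeq\pi\circ\tilde q$ comes from naturality of the counit by the same computation carried out there. Write $P:=X\sqcup_{X[M]}Y[q_{*}M]$ for the pushout and $\Phi\colon P\to Y$ for the induced map; the goal is to show $\Phi$ is an equivalence. It is worth noting that $\Phi$ already admits a section $Y\xrightarrow{s_{0}}Y[q_{*}M]\to P$, since $\pi\circ s_{0}\simeq\id_{Y}$, so $\Phi$ is a retraction; but a retraction need not be an equivalence, so the pushout structure must genuinely be used.

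Next I would reduce to the case where $X$ and $Y$ are affine. Since $\Stk$ is an $\infty$-topos its colimits are universal, so $\Phi$ is an equivalence as soon as it becomes one after base change along a smooth surjection from an affine scheme. Under such a base change the square pulls back to a square of the same shape: the trivial square-zero extension and the maps $\pi,\tilde q$ commute with pullback along the cover (for the bottom vertex one uses flat base change for $q_{*}$, valid since smooth morphisms are flat), so one may assume $Y=\Spec A$, hence $Y[q_{*}M]$ affine. Then, choosing a smooth affine atlas of $X$ with \v{C}ech nerve $W_{\bullet}$, relative $\Spec$ and hence the formation of $X[M]$ commute with the colimit $X=\lvert W_{\bullet}\rvert$ (again by universality of colimits and $\QCoh$-descent, via $X[M]\times_{X}W_{n}=W_{n}[M|_{W_{n}}]$), while the constant diagram on $Y[q_{*}M]$ realizes to $Y[q_{*}M]$; since pushouts commute with colimits this exhibits $P$ as the realization of the levelwise pushouts, reducing to $X$ affine as well.

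For the affine case, with $X=\Spec B$, $Y=\Spec A$, $q^{\sharp}\colon A\to B$ and $M\in(\Mod_{B})_{\geq 0}$: here $q_{*}M$ is $M$ regarded as an $A$-module by restriction of scalars, and $\tilde q$ is $\Spec$ of the $A$-algebra map $A\oplus q_{*}M\to B\oplus M$ which is $q^{\sharp}$ on $A$ and the identity on the underlying module $M$. The $\Stk$-pushout of $\Spec B\xleftarrow{\pi}\Spec(B\oplus M)\xrightarrow{\tilde q}\Spec(A\oplus q_{*}M)$ is the $\Spec$ of the fiber product of connective rings $B\times_{B\oplus M}(A\oplus q_{*}M)$, where $B\to B\oplus M$ is the unit inclusion and $A\oplus q_{*}M\to B\oplus M$ is $q^{\sharp}\oplus\id$ — the fact that $\Aff\hookrightarrow\Stk$ preserves this particular pushout is the statement that trivial square-zero extensions assemble pushouts, which one obtains by sheafifying the presheaf pushout or by citing \cite{LurieSpectral}. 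Finally, this square of connective rings is a pullback, because the vertical cofibers of $A\to A\oplus q_{*}M$ and $B\to B\oplus M$ are both $M$ with the comparison the identity, so the induced map on vertical fibers is an equivalence; hence $B\times_{B\oplus M}(A\oplus q_{*}M)\simeq A$ and $P\simeq\Spec A=Y$.

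I expect the main obstacle to be the reduction to the affine case. Pulling the square back to affines of $X$ produces, at each level of the \v{C}ech nerve, a variant of the affine situation in which the module on the target is still $q_{*}M$ but the comparison map to $M|_{W_{n}}$ need no longer be an equivalence; one must track these contributions and verify that they cancel under the realization, using that the relevant maps are built from the counit and that the covers chosen are flat (so that the shape of the square is preserved). Once this bookkeeping is settled and the input that the $\Stk$-pushout of the affine diagram is the displayed ring fiber product is in hand, everything else is the elementary observation that a split trivial-square-zero square of connective rings is Cartesian.
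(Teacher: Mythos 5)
Your overall strategy has two genuine gaps, and the paper's proof sidesteps both with a much shorter global argument.

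First, the reduction to the affine case does not go through as described. The lemma imposes no hypotheses on $q$ (not even qcqs), so ``flat base change for $q_*$'' along a smooth cover of $Y$ is not available: base change for $q_*$ is exactly the kind of statement that requires \ucd or at least qcqs plus boundedness hypotheses (Lemma~\ref{Lem:ucd0_props}, \cite[Lem.~A.1.3]{HalpernleistnerMapping}), none of which are assumed here. Even granting it, pulling back along a \v{C}ech nerve of $X$ replaces the bottom-left vertex by $Y[q_*M]$ with the \emph{same} module $q_*M$, not by $W_n[(q|_{W_n})_*(M|_{W_n})]$, so the levelwise squares are no longer instances of the statement being proved; you flag this yourself as unresolved ``bookkeeping'', but it is precisely where the content lies, and I do not see how the contributions ``cancel''. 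Second, in the affine case the step from ``$A \simeq B \times_{B\oplus M}(A\oplus q_*M)$ as rings'' to ``$\Spec A$ is the pushout in $\Stk$'' is asserted rather than proved. Knowing that $A$ is the ring pullback does not by itself give $\Spec A$ the universal property of the pushout (mapping out of a limit of rings is not a limit of mapping spaces), and the gluing results you allude to in \cite{LurieSpectral} concern spans in which one leg is a closed immersion; in your span $X \xleftarrow{\pi} X[M] \xrightarrow{\tilde q} Y[q_*M]$ neither $\pi$ (split injective on $\pi_0$) nor $\tilde q$ is a closed immersion, so they do not apply. Your final observation that the square of connective rings is Cartesian is correct, but it is not the missing ingredient.

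The paper's proof avoids all of this. Since $\pi\circ\zeta\simeq\id$ for the zero sections $\zeta$, the pasting lemma reduces the claim to showing that the square with horizontal arrows $\zeta\colon X\to X[M]$ and $\zeta\colon Y\to Y[q_*M]$ is a pushout. Now one uses the universal property of trivial square-zero extensions: for $f\colon Y\to T$, the space of maps $g\colon X[M]\to T$ together with $g\circ\zeta\simeq f\circ q$ is $\Map((f\circ q)^*\BL_T,M)$, which by the adjunction $q^*\dashv q_*$ (valid for any morphism of stacks, with no base-change input) is $\Map(f^*\BL_T,q_*M)$, i.e.\ the space of maps $h\colon Y[q_*M]\to T$ with $h\circ\zeta\simeq f$. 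This is a three-line argument, works for arbitrary $q$ and arbitrary test stacks $T$, and is the route you should take; if you want to salvage your ring-level computation, note that it is essentially the affine shadow of this adjunction argument, not a substitute for it.
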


\begin{proof}
    By the pasting lemma for pushouts, it suffices to show that
      \begin{center}
        \begin{tikzcd}
            X \arrow[r, "\zeta"] \arrow[d, "q"] & X[M] \arrow[d, "\tilde{q}"] \\
            Y \arrow[r, "\zeta"] & Y[q_*M]
        \end{tikzcd}
    \end{center}  
    is a pushout, where the horizontal arrows are the zero sections. Let $f \colon Y \to T$ be given. Then the space of pairs $(g,\sigma)$, where $g \colon X[M] \to T$ and $\sigma \colon g \circ \zeta \simeq f \circ q$, is equivalent to 
    \[ \Map((f\circ q)^* \BL_T,M) \simeq \Map(f^*\BL_T,q_*M), \]
    which in turn is equivalent to the space of pairs $(h,\tau)$, where $h \colon Y[q_*M] \to T$ and $\tau \colon h \circ \zeta \simeq f$. 
\end{proof}

\begin{theorem} \label{thm:universality of gms}
    Good moduli spaces are universal for maps to algebraic spaces.
\end{theorem}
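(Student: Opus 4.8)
The plan is to prove universality by first reducing to the classical case via the Postnikov tower, then invoking Alper's classical universality result \cite{AlperGood}. Concretely, I want to show that if $q \colon X \to Y$ is a good moduli space morphism with $Y$ an algebraic space, and $h \colon X \to Z$ is any morphism to an algebraic space $Z$, then $h$ factors uniquely through $q$. The key structural tool is Lemma~\ref{Lem:s0-pushout}, which says that square-zero extensions on $X$ push forward to square-zero extensions on $Y$ compatibly — this is precisely the engine that let us prove Theorem~\ref{Thm:GMSvsGMScl}(i), and it should again let us lift a factorization from the truncations to all of $X$.

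**First I would** set up the induction along the Postnikov tower $X_\cl = X_{\leq 0} \to X_{\leq 1} \to \cdots \to X$, where each $X_{\leq n+1}$ is a square-zero extension of $X_{\leq n}$ by some $M_n \in \QCoh(X_{\leq n})_{[n+1,n+1]}$. By Theorem~\ref{Thm:GMSvsGMScl}, each $X_{\leq n}$ has a good moduli space $q_n \colon X_{\leq n} \to Y_{\leq n}$, and $Y$ is the colimit of the $Y_{\leq n}$, with each $Y_{\leq n+1}$ the square-zero extension of $Y_{\leq n}$ by $(q_n)_* M_n$, built from the pushout square of Lemma~\ref{Lem:s0-pushout}. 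The base case is Alper's theorem: $q_\cl \colon X_\cl \to Y_\cl$ is an $\sA$-good moduli space by Proposition~\ref{Prop:comparison between Alper and derived GMS}(i), hence universal for maps to classical algebraic spaces; since $Z_\cl$ is a classical algebraic space and $X_\cl \to Z_\cl$ is the truncation of $h$, we get a unique factorization $X_\cl \to Y_\cl \to Z_\cl$.

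**Then I would** carry out the inductive step: given a factorization $X_{\leq n} \to Y_{\leq n} \to Z$ of $h|_{X_{\leq n}}$, I need to extend it over the square-zero extension. Using the description of $X_{\leq n+1}$ and $Y_{\leq n+1}$ as pushouts (diagrams~\eqref{eq:3.2} and~\eqref{eq:3.4} in the proof of Theorem~\ref{Thm:GMSvsGMScl}, now with Lemma~\ref{Lem:s0-pushout} guaranteeing compatibility), a map $Y_{\leq n+1} \to Z$ extending the given $Y_{\leq n} \to Z$ is classified by a lift of the derivation $q_n^* \BL_Z \to \BL_{Y_{\leq n}} \to (q_n)_* M_n$; by the adjunction $\Map(f^*\BL_Z, (q_n)_*M_n) \simeq \Map((q_n^* f)^*\BL_Z, M_n)$ this is exactly the data classifying the already-given extension $X_{\leq n+1} \to Z$ of $h$. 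Uniqueness at each stage follows from the same adjunction identification of mapping spaces, together with the fact that the relevant spaces of factorizations are computed as limits over the tower. Passing to the colimit, $\Map_{/Z}(X, Z)$-style bookkeeping gives a unique factorization $X \to Y \to Z$ of $h$.

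**The hard part will be** making the colimit argument precise: I must check that the space of factorizations of $h$ through $q$ is the limit over $n$ of the spaces of factorizations of $h|_{X_{\leq n}}$ through $q_n$, which requires knowing both that $X = \colim X_{\leq n}$ and $Y = \colim Y_{\leq n}$ in the appropriate sense and that $Z$, being truncated (indeed classical), sees these colimits correctly — i.e., $\Map(X, Z) \simeq \lim_n \Map(X_{\leq n}, Z)$ and likewise for $Y$. This is standard for maps into a classical (hence $0$-truncated, or at least bounded) target, but should be stated carefully. One should also confirm that the square-zero extension $Y_{\leq n+1}$ produced at each stage is literally the one appearing in the Postnikov-type presentation of $Y$ from Theorem~\ref{Thm:GMSvsGMScl}, so that $\colim_n Y_{\leq n} \simeq Y$; this is essentially the content of the construction there, so it is a matter of assembling already-proven facts rather than new work.
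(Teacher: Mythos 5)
Your proposal is correct and follows essentially the same route as the paper: Postnikov towers on both $X$ and $Y$, the pushout squares supplied by Lemma~\ref{Lem:s0-pushout}, the classical universality result for the base case, and passage to the limit of mapping spaces (the paper phrases the inductive step as the single computation $\Map(Y_{\leq n+1},Z) \simeq \Map(Y_{\leq n},Z) \times_{\Map(X_{\leq n},Z)} \Map(X_{\leq n+1},Z)$ rather than unwinding the cotangent-complex adjunction, but that adjunction is exactly the content of Lemma~\ref{Lem:s0-pushout}). One minor point: $Z$ is a derived algebraic space, not necessarily classical, but $\Map(X,Z)\simeq\varprojlim\Map(X_{\leq n},Z)$ holds for any target since $X\simeq\varinjlim X_{\leq n}$, so your worry there is moot.
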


\begin{proof}
    Let $Z$ be an algebraic space and $q \colon X \to Y$ a good moduli space. We will show that 
    \[ q^* \colon \Map(Y,Z) \to \Map(X,Z) \]
    is an equivalence, by considering the Postnikov towers of $X$ and $Y$. Thus, write $X = \varinjlim X_{\leq n}$ and $Y = \varinjlim Y_{\leq n}$. By construction and the proof of Theorem~\ref{Thm:GMSvsGMScl}, we have good moduli spaces $q_{\leq n} \colon X_{\leq n} \to Y_{\leq n}$ which are compatible with the natural square-zero extensions $X_{\leq n} \to X_{\leq n+1}$ and $Y_{\leq n} \to Y_{\leq n+1}$. 

    By the pasting law for pushouts, Lemma \ref{Lem:s0-pushout} implies that 
      \begin{center}
        \begin{tikzcd}
            X_{\leq n} \arrow[d, "q_{\leq n}"] \arrow[r] & X_{\leq n+1} \arrow[d, "q_{\leq n+1}"] \\
            Y_{\leq n} \arrow[r] & Y_{\leq n+1}
        \end{tikzcd}
    \end{center}     
    is a pushout, for each $n$. It follows that
    \begin{align*}
        \Map(Y,Z) &\simeq \varprojlim \Map(Y_{\leq n}, Z) \\
        &\simeq \varprojlim (\Map(X_{\leq n},Z) \times_{\Map(X_{\leq 0},Z)} \Map(Y_{\leq 0},Z)) \\
        &\simeq \varprojlim \Map(X_{\leq n},Z) \\
        & \simeq \Map(X,Z)
    \end{align*}
    where the third line follows from the classical case, \cite[Theorem~3.12]{AHR2}, and Proposition~\ref{Prop:comparison between Alper and derived GMS}(i), and the adjoint property of $(-)_\cl = (-)_{\leq 0}$.
\end{proof}

We extend the notation $X_{\leq n}$ by putting $X_{\leq \infty} \coloneqq X$. The following statement is now immediate.
 
\begin{corollary}
\label{Cor:GMS_cl}
    Let $X$ be an Artin stack, and $0 \leq n \leq m \leq \infty$. Then the space of good moduli spaces for $X_{\leq m}$ is equivalent to the space of good moduli spaces for $X_{\leq n}$, which is either empty or contractible. In particular, any good moduli space $q_m$ for $X_{\leq m}$ is induced from a good moduli space $q_n$ for $X_{\leq n}$ via the procedure in the proof of Theorem \ref{Thm:GMSvsGMScl}, which gives us a pushout
    \begin{center}
        \begin{tikzcd}
            X_{\leq n} \arrow[d, "q_n"] \arrow[r] & X_{\leq m} \arrow[d, "q_m"] \\
            Y_{\leq n} \arrow[r] & Y_{\leq m}
        \end{tikzcd}
    \end{center}
    such that $Y_{\leq n} \simeq (Y_{\leq m})_{\leq n}$.
\end{corollary}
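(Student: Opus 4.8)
The plan is to derive the statement formally from two results already established — the universality of good moduli spaces (Theorem~\ref{thm:universality of gms}) and the detection of good moduli spaces on the classical truncation (Theorem~\ref{Thm:GMSvsGMScl}) — together with the explicit square-zero construction used in the proof of the latter. For an Artin stack $W$, write $\mathcal{G}(W)$ for the $\infty$-groupoid of good moduli spaces for $W$, i.e.\ the full subgroupoid of $(\Stk_{W/})^{\simeq}$ spanned by the good moduli space morphisms $q\colon W\to Y$ (with $Y$ a derived algebraic space). The first step is to show that $\mathcal{G}(W)$ is empty or contractible. By Theorem~\ref{thm:universality of gms}, any good moduli space $q\colon W\to Y$ exhibits $(Y,q)$ as an initial object of the $\infty$-category $\mathrm{AlgSp}_{W/}$ of derived algebraic spaces under $W$ — the universal property says precisely that the mapping spaces out of $(Y,q)$ in $\mathrm{AlgSp}_{W/}$ are contractible. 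The full subcategory spanned by the initial objects of any $\infty$-category is empty or contractible; moreover, if $\mathcal{G}(W)\neq\emptyset$, then any initial object of $\mathrm{AlgSp}_{W/}$ is equivalent over $W$ to a good moduli space, hence is itself one, since being a good moduli space is invariant under composition with equivalences (using Lemma~\ref{Lem:ucd0_props} for the \ucd condition in Definition~\ref{def:gms}). Thus $\mathcal{G}(W)$ coincides with the groupoid of initial objects of $\mathrm{AlgSp}_{W/}$ and is empty or contractible.

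Next I would establish the equivalence $\mathcal{G}(X_{\leq m})\simeq\mathcal{G}(X_{\leq n})$. Since $(X_{\leq m})_\cl=(X_{\leq n})_\cl=X_\cl$, applying Theorem~\ref{Thm:GMSvsGMScl} to the Artin stacks $X_{\leq m}$ and $X_{\leq n}$ gives
\[ \mathcal{G}(X_{\leq m})\neq\emptyset \iff \mathcal{G}(X_\cl)\neq\emptyset \iff \mathcal{G}(X_{\leq n})\neq\emptyset. \]
By the first step, both groupoids are empty or contractible, so they are either both empty or both contractible, hence equivalent.

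For the explicit description, start from a good moduli space $q_m$ for $X_{\leq m}$; by the previous step there is a good moduli space $q_n\colon X_{\leq n}\to Y_{\leq n}$ for $X_{\leq n}$. Running the square-zero construction from the proof of Theorem~\ref{Thm:GMSvsGMScl}(i) up the Postnikov tower $X_{\leq n}\to X_{\leq n+1}\to\cdots$, and passing to the colimit if $m=\infty$, produces good moduli spaces $q_{\leq k}\colon X_{\leq k}\to Y_{\leq k}$ for $n\leq k\leq m$ whose terminal term is a good moduli space for $X_{\leq m}$, hence — by the contractibility of $\mathcal{G}(X_{\leq m})$ — canonically equivalent to $q_m$. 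Exactly as in the proof of Theorem~\ref{thm:universality of gms}, via Lemma~\ref{Lem:s0-pushout} and the pasting law for pushouts, each commutative square with horizontal arrows $X_{\leq k}\to X_{\leq k+1}$ and $Y_{\leq k}\to Y_{\leq k+1}$ and vertical arrows $q_{\leq k}$ and $q_{\leq k+1}$ is a pushout; pasting these squares (and passing to the colimit when $m=\infty$) yields the asserted pushout square. Finally, $Y_{\leq k+1}$ is by construction a square-zero extension of $Y_{\leq k}$ by $(q_{\leq k})_\ast M_k$, where $M_k\in\QCoh(X_{\leq k})$ is concentrated in a single degree $>k$ (it encodes $\pi_{k+1}\oO_X$); since $q_{\leq k}$ is \ucd and hence $t$-exact, $(q_{\leq k})_\ast M_k$ is concentrated in the same degree, whence $(Y_{\leq k+1})_{\leq k}\simeq Y_{\leq k}$. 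Iterating — and, when $m=\infty$, identifying the tower $\{Y_{\leq k}\}_{k\geq n}$ with the Postnikov tower of $Y_{\leq m}$ — gives $(Y_{\leq m})_{\leq n}\simeq Y_{\leq n}$.

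The only genuinely delicate point is the case $m=\infty$: one must check that a filtered colimit of good moduli spaces along these square-zero extensions is again a good moduli space, and that $(-)_{\leq n}$ commutes with that colimit. Both are already implicit in the proof of Theorem~\ref{Thm:GMSvsGMScl} and follow from the $t$-exactness of the maps $q_{\leq k}$; the remainder of the argument is formal.
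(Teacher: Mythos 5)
Your argument is correct and follows exactly the route the paper intends: the paper gives no explicit proof, declaring the corollary immediate from Theorem~\ref{thm:universality of gms} and the square-zero construction in the proof of Theorem~\ref{Thm:GMSvsGMScl}, and your write-up simply fills in those details (contractibility via initial objects, existence transfer through $X_\cl$, and the pasted pushout squares with $t$-exactness giving $Y_{\leq n} \simeq (Y_{\leq m})_{\leq n}$).
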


\subsection{Properties of good moduli spaces continued}
We now list several properties that mirror the classical case (cf.~\cite[Theorem~4.16]{AlperGood}). First a small lemma.

\begin{lemma}
\label{lem:fpqc_connective}
    Let $f \colon Y' \to Y$ be a faithfully flat morphism of Artin stacks. If $M \in \QCoh(Y)$ is such that $f^*M$ is connective, then $M$ is connective.
\end{lemma}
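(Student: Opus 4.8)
The plan is to reduce to affine points of $Y$, where the $t$-structure on $\QCoh$ is by definition governed, and there invoke faithfully flat descent of connectivity for modules.

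By the definition of the $t$-structure on $\QCoh(Y)$, it suffices to show that $x^*M$ is connective for an arbitrary $x \colon \Spec B \to Y$. Fix such an $x$. Since flatness and surjectivity are stable under base change, $Z \coloneqq \Spec B \times_{x,Y,f} Y'$ is an Artin stack with $Z \to \Spec B$ faithfully flat; choosing a smooth surjective atlas $\coprod_i \Spec C_i \to Z$ by affine schemes and using that $\Spec B$ is quasi-compact, finitely many of the induced maps $\Spec C_i \to \Spec B$ already cover $\Spec B$, so with $C \coloneqq \prod_{i=1}^n C_i$ the morphism $g \colon \Spec C \to \Spec B$ is flat (being a smooth morphism followed by a flat one) and surjective, hence faithfully flat. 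We also obtain $x' \colon \Spec C \to Z \to Y'$ with $f \circ x' \simeq x \circ g$, whence
\[ g^* x^* M \;\simeq\; (x')^* f^* M \quad\text{in } \Mod_C. \]
As $f^*M$ is connective, its pullback $(x')^*f^*M$ along the affine point $x'$ is connective, so $x^*M \otimes_B C \simeq g^*x^*M$ is connective.

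It then remains to note that connectivity descends along the faithfully flat map $B \to C$: tensoring with $C$ is exact and $\pi_i(N \otimes_B C) \simeq \pi_i(N) \otimes_B C$, while faithful flatness forces $\pi_i(N) \otimes_B C = 0 \Rightarrow \pi_i(N) = 0$; applying this with $N = x^*M$ shows $x^*M$ is connective, and since $x$ was arbitrary, so is $M$. The only mildly delicate point is the bookkeeping that produces the faithfully flat affine cover $g$ factoring through $Y'$ together with the base-change identity $g^*x^*M \simeq (x')^*f^*M$; alternatively, one may run the argument on hearts, using that flat pullback is $t$-exact (so $\pi_i(f^*M) \simeq f^*\pi_i(M)$) together with the fact that $f^* \colon \QCoh(Y)^{\heartsuit} \to \QCoh(Y')^{\heartsuit}$ is conservative for faithfully flat $f$ by classical faithfully flat descent.
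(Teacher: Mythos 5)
Your proposal is correct and follows essentially the same route as the paper: reduce to the case where both source and target are affine (using the definition of the $t$-structure on $\QCoh(Y)$ and quasi-compactness to extract a finite affine cover factoring through $Y'$), then use that flat pullback is $t$-exact together with faithful flatness to conclude that the negative truncation (equivalently, the negative homotopy groups) of $M$ vanishes. The paper phrases the last step via the triangle $\tau_{\geq 0}M \to M \to \tau_{<0}M$ rather than via $\pi_i(N\otimes_B C) \simeq \pi_i(N)\otimes_B C$, but this is only a cosmetic difference.
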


\begin{proof}
    By definition of the $t$-structure on $\QCoh(Y)$, we may assume that $Y$ is affine. By taking a smooth cover of $Y'$, we may assume that $Y'$ is a disjoint union of affine schemes. We then further reduce to a finite disjoint union of affines, hence to the case where $Y'$ is affine as well.

    The exact sequence $\tau_{\geq 0} M \to M \to \tau_{<0} M$ induces the exact sequence
    \[ \tau_{\geq 0}f^* M \to f^*M \to \tau_{<0} f^*M,  \]
    since $f$ is flat hence $f^*$ commutes with the truncation functors. If $f^*M$ is connective, then $\tau_{<0} f^*M \simeq f^* \tau_{<0} M \simeq 0$, hence $\tau_{<0} M \simeq 0$ since $f$ is faithfully flat. The claim follows.
 \end{proof}

\begin{lemma} \label{lem:gms_prop}
    Let $q \colon X \to Y$ be a good moduli space and $f \colon X \to S$ a morphism from $X$ to an algebraic space $S$.
    \begin{enumerate}
        \item $q$ is surjective and universally closed.
        \item Being a good moduli space is fpqc-local on the target. 
        \item Let $A$ be a sheaf of quasi-coherent $\oO_X$-algebras. Then the morphism $\Spec_X A \to \Spec_Y q_*A$ is a good moduli space.
        \item Let $k$ be an algebraically closed field. For $x_1,x_2 \in \lvert X(k) \rvert$, the relation $\sim$ for which $x_1 \sim x_2$ if $\overline{\{x_1\}} \cap \overline{\{x_2\}} \neq \emptyset$ is an equivalence relation such that $\lvert Y(k) \rvert$ is the quotient of $\lvert X(k) \rvert$ by $\sim$. 
        \item If $X$ is flat over $S$, then the same is true for the induced morphism $Y \to S$.
    \end{enumerate}
\end{lemma}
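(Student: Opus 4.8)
The plan is to reduce immediately to the classical situation and then use that the property of a good moduli space only depends on the underlying classical stack, together with known classical results. First, observe that $Y \to S$ is flat if and only if $\oO_S$-modules that are discrete pull back to discrete modules on $Y$, which (by the definition of the $t$-structure and Lemma \ref{lem:fpqc_connective} applied to a smooth cover) can be checked after pulling back along the faithfully flat morphism $q \colon X \to Y$; but $q^*$ applied to the pullback of a discrete $\oO_S$-module along $Y \to S$ agrees with the pullback of that module along $f = (Y \to S) \circ q$ composed through $X$, so flatness of $Y \to S$ would follow once we know $q$ is flat after base change — which is too strong. Instead, the cleaner route is: since $q$ is \ucd and in particular cohomologically affine and surjective, and since $q_* \oO_X \simeq \oO_Y$, one has for any $N \in \QCoh(S)$ that $q_* f^* N \simeq q_*(q^* g^* N) \simeq g^* N$ by the projection formula (Lemma \ref{Lem:ucd0_props}), where $g \colon Y \to S$. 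Thus $g^* N$ is connective whenever $f^* N$ is, because $q_*$ is $t$-exact ($q$ being \cd) and $f^* N$ connective forces $q_* f^* N = g^*N$ connective. Taking $N$ to range over discrete modules shows $g^* $ is right $t$-exact, i.e. $g = (Y\to S)$ is flat.

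Let me spell out the key steps in order. \emph{Step 1.} Reduce to checking that $g^* \colon \QCoh(S) \to \QCoh(Y)$ is right $t$-exact, i.e.\ sends $\QCoh(S)^\heartsuit$ into $\QCoh(Y)_{\geq 0}$; since $g^*$ is always right $t$-exact up to the relevant truncation this is exactly flatness of $g$ (one may also phrase it via the fiberwise criterion, but the module-theoretic statement suffices here). \emph{Step 2.} Fix $N \in \QCoh(S)^\heartsuit$. By hypothesis $X$ is flat over $S$, so $f^* N \in \QCoh(X)^\heartsuit$, in particular $f^* N$ is connective. \emph{Step 3.} Since $f = g \circ q$, we have $f^* N \simeq q^* g^* N$, and since $q$ is qcqs and \ucd, the projection formula from Lemma \ref{Lem:ucd0_props} together with $q_*\oO_X \simeq \oO_Y$ gives
\[ q_* f^* N \simeq q_*(q^* g^* N) \simeq (q_* \oO_X) \otimes_{\oO_Y} g^* N \simeq g^* N. \]
\emph{Step 4.} As $q$ is \cd (Proposition \ref{Prop:quasi-aff_cd_ucd_$t$-exact} and the remark after Definition \ref{def:gms}, using that $Y$ has quasi-affine diagonal being a quasi-separated algebraic space), the functor $q_*$ is $t$-exact; since $f^* N$ is connective, $q_* f^* N \simeq g^* N$ is connective. \emph{Step 5.} Conclude that $g^*$ takes $\QCoh(S)^\heartsuit$ to connective objects, hence $g$ is flat.

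I expect the only real subtlety to be Step 1 — namely pinning down precisely which module-level statement is equivalent to flatness of a morphism of (derived) algebraic spaces, and making sure no quasi-coherence or base-change hypothesis is being smuggled in. One must be slightly careful that flatness of $g \colon Y \to S$ is a statement that can be tested purely through $g^*$ on quasi-coherent modules (which it can, by definition of flatness for morphisms of derived stacks: $g$ is flat iff $g^*$ is $t$-exact, equivalently right $t$-exact since it is always left... no — iff it preserves connectivity and the heart); and one should note that $Y$, being an algebraic space with $q$ surjective from an Artin stack, inherits whatever qcqs hypotheses are needed for the projection formula to apply, which it does since $q$ is assumed qcqs in the definition of a good moduli space. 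Everything else is a formal consequence of the projection formula and $t$-exactness of $q_*$, both already established.
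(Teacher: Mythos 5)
You only address part (v); parts (i)--(iv) are not touched, so I confine my comments to (v).

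The computational core of your argument is sound, and it is a genuinely different route from the paper's: the paper first invokes Alper's classical result to obtain flatness of $g_\cl$ and then separately verifies $\pi_n(\oO_Y) \simeq g_\cl^\ast \pi_n(\oO_S)$ by pushing forward the analogous identity on $X$, whereas you work with a single module-theoretic criterion and the identity $q_\ast f^\ast N \simeq g^\ast N$, which does follow correctly from the projection formula of Lemma~\ref{Lem:ucd0_props} together with $q_\ast \oO_X \simeq \oO_Y$. However, there is a genuine error in what you reduce to. Flatness of $g$ is \emph{not} the statement that $g^\ast$ sends $\QCoh(S)^{\heartsuit}$ into $\QCoh(Y)_{\geq 0}$: that is right $t$-exactness of $g^\ast$, which holds for \emph{every} morphism (as recalled at the start of Section~\ref{sec:2}) and so carries no content. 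The correct criterion --- which your closing paragraph gropes towards but does not settle on --- is that $g^\ast$ carries discrete modules to \emph{discrete} modules; for $S$ affine this is the standard characterization of flat connective ring maps, and one reduces to that case since flatness is local on the target and good moduli space morphisms are stable under base change (Lemma~\ref{Lem:GMS_base-change}). As written, Steps 1 and 5 establish a vacuous statement, so the conclusion does not follow.

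The good news is that Steps 2--4 already prove the correct statement once the target is fixed: $f^\ast N$ is discrete (not merely connective) by flatness of $X$ over $S$, and $q_\ast$ is genuinely $t$-exact --- this follows from Proposition~\ref{Prop:ucd0_equivalent} applied with $Y' = Y$, so no quasi-separatedness hypothesis on $Y$ is needed and you need not route through Proposition~\ref{Prop:quasi-aff_cd_ucd_$t$-exact} --- hence $g^\ast N \simeq q_\ast f^\ast N$ lies in the heart. Replacing ``connective'' by ``discrete'' throughout and stating the flatness criterion correctly yields a complete proof of (v), and one that is arguably more uniform than the paper's, since it treats the classical flatness and the comparison of higher homotopy sheaves in a single stroke rather than citing \cite[Theorem~4.16]{AlperGood} for the former.
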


\begin{proof}
    (i) and (iv) are a consequence of Theorem~\ref{Thm:GMSvsGMScl}(ii) and the fact that a good moduli space for $X_\cl$ is an $\sA$-good moduli space for $X_\cl$ by Proposition~\ref{Prop:comparison between Alper and derived GMS}. Thus \cite[Theorem~4.16]{AlperGood} applies.
    
    For (ii), suppose that we have a Cartesian diagram
    \begin{center}
        \begin{tikzcd}
            X' \arrow[d, "q'"] \arrow[r, "g"] & X \arrow[d, "q"] \\
            Y' \arrow[r, "f"] & Y,
        \end{tikzcd}
    \end{center}
    where $q'$ is a good moduli space and $f$ is faithfully flat and quasi-compact. Note that this implies that $q$ is qcqs. We need to show that $q$ is a good moduli space. By Proposition \ref{Prop:ucd0_equivalent} and Proposition \ref{Prop:quasi-aff_cd_ucd_$t$-exact}, it suffices to check that $q$ is \cd and $q_\ast \oO_X \simeq \oO_Y$. Let $M \in \QCoh(X)^{\heartsuit}$ be given. By \cite[Lem.~A.1.3]{HalpernleistnerMapping}, it holds that $f^*q_*M \simeq q'_*g^*M$, since $f$ is flat and $M$ is bounded above. By assumption and the flatness of $g$, it follows that $f^*q_*M \in \QCoh(Y')_{\geq 0}$. By Lemma \ref{lem:fpqc_connective}, it follows that $q_*M \in \QCoh(Y)_{\geq 0}$. This shows that $q$ is \cd. And again by \cite[Lem.~A.1.3]{HalpernleistnerMapping}, we have
    \[ f^*q_*\oO_X \simeq q'_*g^*\oO_X \simeq \oO_{Y'} \simeq f^\ast \oO_Y \]
    since $q'_* \oO_{X'} \simeq \oO_{Y'}$. Since $f$ is faithfully flat, $f^*$ is conservative, hence $q_*\oO_X \simeq \oO_Y$.

    For (iii), consider the commutative diagram
    \begin{align*}
        \xymatrix{
        \Spec_X A \ar[r]^-{i} \ar[d]_-{q'} & X \ar[d]^-{q} \\
        \Spec_Y q_\ast A \ar[r]_-j & Y.
        }
    \end{align*}
    The morphism $j$ is affine and thus has affine diagonal. Moreover $j \circ q' \simeq q \circ i$ is \ucd, since $i$ is \ucd as an affine morphism and $q$ is \ucd. It follows by Proposition~\ref{Prop:cancellation property} that $q'$ is \ucd. 
    
    Let $F$ be the cofiber of the natural morphism $\oO_{\Spec_Y q_\ast A} \to q'_\ast \oO_{\Spec_X A}$. Since $j_\ast q'_\ast \oO_{\Spec_X A} \simeq q_\ast i_\ast \oO_{\Spec_X A} \simeq q_\ast A \simeq j_\ast \oO_{\Spec_Y q_\ast A}$, we deduce that $j_\ast F \simeq 0$. But for any affine morphism $f \colon S \to T$, the functor $f_*\colon \QCoh(S)\to \QCoh(T)$ is the forgetful functor from $\oO_S$-algebras in $\QCoh(T)$ to $\oO_T$-algebras in $\QCoh(T)$, hence conservative. It follows that $F \simeq 0$ since $j_*F \simeq 0$.

    For (v), by universality of good moduli spaces, we have a natural morphism $g \colon Y \to S$. As above, $Y_\cl$ is a good and hence an $\sA$-good moduli space for $X_\cl$, which must be flat over $S$ by \cite[Theorem~4.16]{AlperGood}. It remains to check that the natural morphism $g_\cl^\ast \pi_n(\oO_S) \simeq \pi_n(\oO_S) \otimes_{\pi_0(\oO_S)} \pi_0(\oO_Y) \to \pi_n(\oO_Y)$ is an equivalence for all $n > 0$. By the flatness of $X$ over $S$, this is the case for the morphism $\pi_n(\oO_S) \otimes_{\pi_0(\oO_S)} \pi_0(\oO_X) \to \pi_n(\oO_X)$. We then have by definition $\pi_n(\oO_X) \simeq \pi_n(\oO_S) \otimes_{\pi_0(\oO_S)} \pi_0(\oO_X) \simeq q_\cl^\ast g_\cl^\ast \pi_n(\oO_S)$. Applying the pushforward and noting that $\pi_n(\oO_Y) \simeq (q_\cl)_\ast \pi_n(\oO_X)$ for all $n \geq 0$ as $\pi_0(\oO_Y)$-modules by the proof of Theorem~\ref{Thm:GMSvsGMScl}, together with $(q_\cl)_\ast q_\cl^\ast \simeq \id$ by the projection formula, we obtain
    $$ \pi_n(\oO_Y) \simeq (q_\cl)_\ast \pi_n(\oO_X) \simeq (q_\cl)_\ast q_\cl^{\ast} g_\cl^\ast \pi_n(\oO_S) \simeq g_\cl^\ast \pi_n(\oO_S),$$
    which concludes the proof.
\end{proof}

\begin{remark}
    Finite generation and other similar properties for good moduli spaces, as stated in \cite[Theorem~4.16]{AlperGood}, hold in the derived setting as well in the obvious way. Since the translation to our context is immediate for such properties, we omit them in the interest of space.
\end{remark}

\subsection{Closed substacks}
Throughout this subsection, let $q \colon X \to Y$ be a good moduli space.
\begin{definition} \label{def: saturated closed substack}
     The \emph{scheme-theoretic image} of a closed immersion $Z \to X$ is defined as $q(Z) \coloneqq \Spec_Y q_* \oO_Z$. For a closed immersion $Z' \to Y$, its \emph{scheme-theoretic pre-image} is defined as $q^{-1}(Z') = Z' \times_Y X$. We say that a closed substack $Z \to X$ is \emph{saturated} if the natural morphism $Z \to q^{-1}(q(Z))$ is an equivalence. $q^{-1}(q(Z)) = \Spec_X (q^\ast q_\ast \oO_Z)$ is called the \emph{saturation} of the closed substack $Z$ and denoted by $Z_{\mathrm{sat}}$.
\end{definition}

\begin{remark}
    Observe that $q(Z) \to Y$ is a closed immersion, and that $q(Z)_\cl$ is the classical scheme-theoretic image of $Z_\cl \to X_\cl$ under $q_\cl$. By definition, it holds that $q(X) \simeq Y$.
\end{remark}

\begin{remark}
    In general, scheme-theoretic images are subtle in derived algebraic geometry. However, in the case of a good moduli space $q \colon X \to Y$, the fact that $q_\ast \oO_X \simeq \oO_Y$ and that $q_*$ is $t$-exact allows us to give an easy definition.
\end{remark}

\begin{corollary}
    Let $Z \to X$ be a closed immersion. The morphism $Z \to q(Z)$ defines a good moduli space for $Z$.
\end{corollary}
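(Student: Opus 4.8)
The plan is to deduce the statement as a direct instance of Lemma~\ref{lem:gms_prop}(iii). Write $i \colon Z \hookrightarrow X$ for the given closed immersion. Since a closed immersion is affine, $Z$ is the relative spectrum $Z \simeq \Spec_X A$ of the connective quasi-coherent $\oO_X$-algebra $A \coloneqq i_\ast \oO_Z$, and under this identification the morphism $\Spec_X A \to X$ appearing in Lemma~\ref{lem:gms_prop}(iii) is exactly $i$. Unwinding Definition~\ref{def: saturated closed substack}, we have
\[ q(Z) = \Spec_Y q_\ast \oO_Z = \Spec_Y q_\ast A, \]
and the natural morphism $Z \to q(Z)$ is nothing but the factorization $q' \colon \Spec_X A \to \Spec_Y q_\ast A$ of $q \circ i$ produced in that lemma; both are the canonical map furnished by the relative-spectrum/pushforward adjunction over $Y$ applied to $\oO_Z$, so they agree.

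Granting these identifications, Lemma~\ref{lem:gms_prop}(iii) applied to $A = i_\ast \oO_Z$ asserts precisely that $q' \colon Z \to q(Z)$ is a good moduli space morphism. It then remains only to note that the target is an algebraic space: the structure morphism $q(Z) = \Spec_Y q_\ast A \to Y$ is affine and $Y$ is an algebraic space, so $q(Z)$ is one too. Hence $Z \to q(Z)$ is a good moduli space for $Z$.

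I do not expect any genuine obstacle here, as the content is entirely contained in Lemma~\ref{lem:gms_prop}(iii); the only real task is the bookkeeping identifying $q(Z)$ and the morphism $Z \to q(Z)$ of Definition~\ref{def: saturated closed substack} with that lemma's conclusion for $A = i_\ast \oO_Z$. If one wishes to avoid citing the lemma as a black box, the same proof runs directly: $q'$ is \ucd by the cancellation property (Proposition~\ref{Prop:cancellation property}), since $q \circ i$ is \ucd (a composite of \ucd morphisms) and the affine morphism $j \colon q(Z) \to Y$ has affine, hence \ucd, diagonal; and the equivalence $\oO_{q(Z)} \simeq q'_\ast \oO_Z$ is obtained by applying the conservative functor $j_\ast$, using $j_\ast q'_\ast \oO_Z \simeq (q \circ i)_\ast \oO_Z \simeq j_\ast \oO_{q(Z)}$.
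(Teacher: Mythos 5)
Your proof is correct and is exactly the paper's argument: the paper's entire proof is ``This follows from Lemma~\ref{lem:gms_prop}(iii) applied to $A = \oO_Z$.'' Your additional bookkeeping identifying $q(Z)$ with $\Spec_Y q_\ast A$ and noting that $q(Z)$ is an algebraic space (being affine over $Y$) is sound and just makes the implicit identifications explicit.
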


\begin{proof}
    This follows from Lemma~\ref{lem:gms_prop}(iii) applied to $A = \oO_Z$.
\end{proof}

\begin{lemma}
\label{Lem:GMS_image-pull}
    Let $Y' \to Y$ be a closed immersion, and let $q' \colon X' \to Y'$ be the pullback of $q$. Then the natural map $Y' \to q(X')$ is an equivalence.
\end{lemma}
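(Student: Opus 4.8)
The plan is to reduce the statement to the identity $q_*\oO_{X'}\simeq j_*\oO_{Y'}$ of $\oO_Y$-algebras, where $j\colon Y'\hookrightarrow Y$ denotes the given closed immersion. First I would record that, by Lemma~\ref{Lem:GMS_base-change}, the base-change $q'\colon X'\to Y'$ is again a good moduli space; in particular the natural map $\oO_{Y'}\to q'_*\oO_{X'}$ is an equivalence. Write $g\colon X'\hookrightarrow X$ for the closed immersion obtained by base-changing $j$, so that $X'=q^{-1}(Y')=Y'\times_Y X$ and $q\circ g=j\circ q'$. Viewing $\oO_{X'}$ as the object $g_*\oO_{X'}\in\QCoh(X)$ in accordance with Definition~\ref{def: saturated closed substack}, plain functoriality of pushforward gives
\[
q_*\oO_{X'}=q_*g_*\oO_{X'}\simeq j_*q'_*\oO_{X'}\simeq j_*\oO_{Y'}
\]
as $\oO_Y$-algebras, where the last equivalence uses that $q'$ is a good moduli space. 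Note that $q_*\oO_{X'}$ is connective here, since $g_*\oO_{X'}$ is connective ($g$ being affine) and $q_*$ is $t$-exact (as $q$ is \ucd, by Remark~\ref{rem:2.4}); this is precisely what makes $q(X')=\Spec_Y q_*\oO_{X'}$ a legitimate object.

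Applying $\Spec_Y(-)$ then yields $q(X')=\Spec_Y q_*\oO_{X'}\simeq\Spec_Y j_*\oO_{Y'}$, and since $j$ is a closed immersion, in particular affine, the canonical map $Y'\to\Spec_Y j_*\oO_{Y'}$ is an equivalence. Composing these identifications produces the desired equivalence between $Y'$ and $q(X')$. The final step is the bookkeeping one of checking that, under this identification, the two resulting closed immersions into $Y$ agree, i.e.\ that the equivalence is the natural map of the statement; this follows from the fact that every map appearing above is compatible with the structure morphisms to $Y$, so that a morphism between the two closed substacks of $Y$ is unique once it exists.

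I do not expect a serious obstacle: the argument is essentially formal, being a chain of pushforward identities followed by relative $\Spec$. The only points that require a little care are the connectivity of $q_*\oO_{X'}$ (needed to make sense of the notation $q(X')$ at all) and keeping track of the $\oO_Y$-algebra structures throughout, so that the equivalence $Y'\simeq q(X')$ is identified with the natural map rather than merely an abstract isomorphism.
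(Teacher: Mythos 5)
Your argument is correct and is exactly what the paper intends: its proof consists of the single line ``This is immediate from Lemma~\ref{Lem:GMS_base-change}'', and your chain $q_*g_*\oO_{X'}\simeq j_*q'_*\oO_{X'}\simeq j_*\oO_{Y'}$ followed by $\Spec_Y j_*\oO_{Y'}\simeq Y'$ is precisely the unpacking of that remark. No differences in approach, only in the level of detail.
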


\begin{proof}
    This is immediate from Lemma \ref{Lem:GMS_base-change}.
\end{proof}

The following proposition is an application of the projection formula for \ucd morphisms.

\begin{proposition}
    Let $Z_1 \to X$ and $ Z_2 \to X$ be closed immersions. For $i=1,2$, let $F_i$ denote the fiber of the morphism $\oO_X \to \oO_{Z_i}$.
    
    Then the natural morphism 
    \begin{align} \label{eq: loc 3.8}
        q(Z_1 \times_X Z_2) \to q(Z_1) \times_Y q(Z_2)
    \end{align} 
    is an equivalence if and only if the natural morphism $\varphi \colon q_\ast {F_1} \otimes_{\oO_Y} q_\ast {F_2} \to q_\ast ( {F_1} \otimes_{\oO_X} {F_2} )$ is an equivalence. In particular, this is the case if $Z_2$ is saturated.
\end{proposition}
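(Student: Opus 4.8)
The plan is to translate the statement into one about the lax monoidal structure map of $q_*$, and then to settle it by a two-out-of-three argument on cofibers. Write $A_i \coloneqq \oO_{Z_i}$, regarded as a quasi-coherent $\oO_X$-algebra; since closed immersions are affine, $Z_1 \times_X Z_2 = \Spec_X(A_1 \otimes_{\oO_X} A_2)$, so that $q(Z_1 \times_X Z_2) = \Spec_Y q_*(A_1 \otimes_{\oO_X} A_2)$ while $q(Z_1) \times_Y q(Z_2) = \Spec_Y(q_* A_1 \otimes_{\oO_Y} q_* A_2)$. By construction of $\Spec_Y$, the morphism \eqref{eq: loc 3.8} is an equivalence if and only if the natural map of $\oO_Y$-algebras
\[ \psi \colon q_* A_1 \otimes_{\oO_Y} q_* A_2 \lr q_*(A_1 \otimes_{\oO_X} A_2), \]
i.e.\ the lax (symmetric) monoidal structure map of the right adjoint $q_*$, is an equivalence of underlying $\oO_Y$-modules. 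The task thus becomes to show that $\psi$ is an equivalence if and only if $\varphi$ is.

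Next I would realise $\psi$ as the map on total cofibers induced by a map of commutative squares. From the cofiber sequences $F_i \to \oO_X \to A_i$ and the exactness of $- \otimes_{\oO_X} -$ in each variable, $A_1 \otimes_{\oO_X} A_2$ is the total cofiber of the square $S$ in $\QCoh(X)$ with vertices $F_1 \otimes_{\oO_X} F_2$, $F_1$, $F_2$, $\oO_X$ and all maps induced by the $F_i \to \oO_X$. The functor $q_*$ is exact, being a right adjoint between stable categories, so $q_*(A_1 \otimes_{\oO_X} A_2)$ is the total cofiber of $q_* S$, whose vertices are $q_*(F_1 \otimes_{\oO_X} F_2)$, $q_* F_1$, $q_* F_2$, and $q_* \oO_X \simeq \oO_Y$ — the last identification being exactly condition (ii) of a good moduli space. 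On the other hand, using $q_* A_i = \mathrm{cofib}(q_* F_i \to \oO_Y)$, the object $q_* A_1 \otimes_{\oO_Y} q_* A_2$ is the total cofiber of the square $S'$ in $\QCoh(Y)$ with vertices $q_* F_1 \otimes_{\oO_Y} q_* F_2$, $q_* F_1$, $q_* F_2$, $\oO_Y$. Naturality and unitality of the lax monoidal structure of $q_*$, together with $q_* \oO_X \simeq \oO_Y$, produce a map of squares $S' \to q_* S$ which is the identity on the three vertices $q_* F_1$, $q_* F_2$, $\oO_Y$ and is $\varphi$ on the fourth, and whose induced map on total cofibers is precisely $\psi$.

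I would then conclude from the elementary fact that a map of commutative squares which is an equivalence on three of the four vertices induces an equivalence on total cofibers if and only if it is an equivalence on the remaining vertex (iterate: for fixed target $b$, the induced map $\mathrm{cofib}(a' \to b) \to \mathrm{cofib}(a \to b)$ is an equivalence if and only if $a' \to a$ is, by the octahedral axiom). Applied to $S' \to q_* S$, this gives that $\psi$ is an equivalence if and only if $\varphi$ is, which is the main claim. For the final assertion, if $Z_2$ is saturated then by definition the counit $q^* q_* A_2 \to A_2$ is an equivalence, so $A_1 \otimes_{\oO_X} A_2 \simeq A_1 \otimes_{\oO_X} q^* q_* A_2$, and the projection formula for the qcqs \ucd morphism $q$ (Lemma~\ref{Lem:ucd0_props}) identifies $\psi$ with the equivalence $q_* A_1 \otimes_{\oO_Y} q_* A_2 \xrightarrow{\sim} q_*(A_1 \otimes_{\oO_X} q^* q_* A_2) \simeq q_*(A_1 \otimes_{\oO_X} A_2)$, whence \eqref{eq: loc 3.8} is an equivalence. (Alternatively, running the same argument with $F_2$ in place of $A_2$ shows that $\varphi$ itself is an equivalence.)

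The main obstacle is homotopy-coherent bookkeeping rather than any genuine difficulty: one has to verify that the map of squares $S' \to q_* S$ is the expected one and that it induces $\psi$ on total cofibers, and, for the last paragraph, that the projection-formula equivalence really is $\psi$ (this uses the triangle identities relating the lax monoidal structure map, the unit and counit, and the projection-formula map). The genuinely mathematical inputs — exactness of $q_*$, the identification $q_* \oO_X \simeq \oO_Y$, the two-out-of-three property of cofibers, and the projection formula — are all already in hand.
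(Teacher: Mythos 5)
Your argument is correct, and it takes a genuinely different route from the paper's. The paper works with the cofiber $C_2$ of the counit $q^\ast q_\ast F_2 \to F_2$ and uses the projection formula repeatedly: it identifies the cofiber of $\varphi$ with $q_\ast(F_1 \otimes_{\oO_X} C_2)$, identifies the obstruction to \eqref{eq: loc 3.8} being an equivalence with $q_\ast(\oO_{Z_1} \otimes_{\oO_X} C_2)$, and then relates the two via the triangle $F_1 \otimes_{\oO_X} C_2 \to C_2 \to \oO_{Z_1} \otimes_{\oO_X} C_2$ together with $q_\ast C_2 \simeq 0$. You instead exhibit both $q_\ast \oO_{Z_1} \otimes_{\oO_Y} q_\ast \oO_{Z_2}$ and $q_\ast(\oO_{Z_1} \otimes_{\oO_X} \oO_{Z_2})$ as total cofibers of squares built from the $F_i \to \oO_X$, and run a two-out-of-three argument on the map of squares, which is an equivalence on three vertices and is $\varphi$ on the fourth. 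Your main biconditional thus uses only exactness of $q_\ast$ and of the tensor product, the lax monoidal structure of $q_\ast$, and $q_\ast \oO_X \simeq \oO_Y$ --- the projection formula (hence the \ucd hypothesis beyond these consequences) enters only in the saturated case. What the paper's route buys in exchange is the pair of additional equivalent vanishing conditions $q_\ast(\oO_{Z_1} \otimes_{\oO_X} C_2) \simeq 0$ and $q_\ast(F_1 \otimes_{\oO_X} C_2) \simeq 0$, which are recorded in the remark immediately following the proposition; these do not drop out of your argument directly. The coherence points you flag (that the map of squares restricts to the unitor on the three outer vertices, that it induces $\psi$ on total cofibers, and that the projection-formula equivalence agrees with $\psi$ via the triangle identities) are exactly the ones that need checking, and all are standard.
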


\begin{proof}
    Let $C_i$ be the cofiber of the morphism $q^\ast q_\ast F_i \to F_i$ for $i=1,2$. Note that by the projection formula we have $q_\ast C_i = 0$.
    
    We first show that $q_\ast ( {F_1} \otimes_{\oO_X} {F_2} ) \simeq q_\ast {F_1} \otimes_{\oO_Y} q_\ast {F_2}$ is equivalent to $q_\ast(\oO_{Z_1} \otimes_{\oO_X} C_2) = 0$.

    Consider the exact triangle $q^\ast q_\ast F_2 \to F_2 \to C_2$. Applying the functor $q_\ast(F_1 \otimes_{\oO_X} ( - ) )$ and using the projection formula, the cofiber of $\varphi$ is $q_\ast(F_1 \otimes_{\oO_X} C_2)$.

    There is an exact triangle
    $$q_\ast(F_1 \otimes_{\oO_X} C_2) \lr q_\ast C_2 \lr q_\ast(\oO_{Z_1} \otimes_{\oO_X} C_2).$$

    Since $q_\ast C_2 = 0$, we see that $q_\ast(\oO_{Z_1} \otimes_{\oO_X} C_2) = 0$ if and only if $q_\ast(F_1 \otimes_{\oO_X} C_2) = 0$ if and only if $\varphi$ is an equivalence.
    
    Now, consider the exact triangle $F_2 \to \oO_X \to \oO_{Z_2}$. Applying $q_\ast$ and tensoring with $q_\ast \oO_{Z_1}$ gives an exact triangle
    \[
        q_\ast \oO_{Z_1} \otimes_{\oO_Y} q_\ast F_2 \lr q_\ast \oO_{Z_1} \lr q_*\oO_{Z_1} \otimes_{\oO_Y} q_*\oO_{Z_2}. 
    \]
    
    On the other hand, tensoring with $\oO_{Z_1}$ and then applying $q_\ast$ gives an exact triangle
    \[
        q_\ast (\oO_{Z_1} \otimes_{\oO_X}  F_2) \lr q_\ast \oO_{Z_1} \lr q_*(\oO_{Z_1} \otimes_{\oO_X} \oO_{Z_2}). 
    \]

    It thus suffices to prove that the natural morphism 
    \[
    \varphi \colon q_\ast \oO_{Z_1} \otimes_{\oO_Y} q_\ast F_2 \lr q_\ast (\oO_{Z_1} \otimes_{\oO_X}  F_2) 
    \]
    is an equivalence if and only if $q_\ast(\oO_{Z_1} \otimes_{\oO_X} C_2) = 0$.

    Consider the exact triangle $q^\ast q_\ast F_2 \to F_2 \to C_2$. Then, applying the functor $q_\ast(\oO_{Z_1} \otimes_{\oO_X} ( - ) )$ and using the projection formula, the cofiber of $\varphi$ is exactly $q_\ast(\oO_{Z_1} \otimes_{\oO_X} C_2)$, as we want.

    If $Z_2$ is saturated, then by definition $\oO_{Z_2} \simeq q^*q_*\oO_{Z_2}$ and hence $C_2 \simeq 0$. Thus, the condition holds and we are done.
\end{proof}

\begin{remark}
    The proof of the proposition shows that we could have equivalently required that $q_\ast(\oO_{Z_1} \otimes_{\oO_X} C_2) = 0$ or $q_\ast(F_1 \otimes_{\oO_X} C_2) = 0$ as the condition in the statement.
\end{remark}

\begin{remark}
    The above proposition shows that there is a difference in behavior regarding scheme-theoretic images of closed substacks and their intersection between the classical and derived setting. Namely, the morphism~\eqref{eq: loc 3.8} is always an isomorphism when everything is classical (including the closed immersions). 

    For a concrete example that shows that the condition of the proposition is not automatic, consider $X = B\GG_m$ and let $Z_1 = [\Spec k[\epsilon_1] / \GG_m]$, $Z_2 = [\Spec k[\epsilon_2] / \GG_m]$ be two closed substacks of $X$ where $\epsilon_i$ are in cohomological degree $-1$, satisfy $\epsilon_i^2 = 0$ and the $\GG_m$-weight of $\epsilon_i$ is equal to $(-1)^i$, over an algebraically closed field $k$ of characteristic $0$. Since $X$ is classical Noetherian with affine diagonal, its good moduli space as a derived stack is the same as its $\sA$-good moduli space and hence is given by the canonical morphism $q \colon X = B\GG_m \to *=Y$. As we will see in Subsection~\ref{subsection:group action examples}, the good moduli spaces of $Z_1$ and $Z_2$ are obtained by taking invariants. It follows that the good moduli spaces of $Z_1$ and $Z_2$ are also equivalent to the point $*$. In particular, we have that $q(Z_1) \times_Y q(Z_2) \simeq *$.
    
    On the other hand, $Z_1 \times_X Z_2 = [\Spec k[\epsilon_1, \epsilon_2] / \GG_m]$. Considering $k[\epsilon_1, \epsilon_2]$ as an $\oO_{Z_1 \times_X Z_2}$-module with the given $\GG_m$-weights, we see that $q_\ast k[\epsilon_1, \epsilon_2] = k[\epsilon_1, \epsilon_2]^{\GG_m} = k \oplus k \epsilon_1 \epsilon_2$, which is not equal to $k$. Thus the morphism~\eqref{eq: loc 3.8} cannot be an equivalence.
\end{remark}

\subsection{Gluing good moduli spaces} 
Suppose that $X$ is an Artin stack with good moduli space $q \colon X \to Y$. Let $U \subseteq \lvert X \rvert$ be an open subset. Consider the image $q (U)$ inside $\lvert Y \rvert$. In analogy with the case of a closed substack, we say that $U$ is \textit{saturated} if $U$ is the preimage of $q(U)$ under $q \colon \lvert X \rvert \to \lvert Y \rvert$. An open substack $X' \subseteq X$ is \emph{saturated} if $\lvert X'\rvert \subseteq \lvert X \rvert$ is saturated. 

\begin{proposition}
    Let $q \colon X \to Y$ be a good moduli space. 
    \begin{enumerate}
        \item If $Y' \subseteq Y$ is an open substack, then $q^{-1}(Y') := Y' \times_Y X$ is a saturated open substack of $X$. 
        \item If $X' \subseteq X$ is a saturated open substack, then $q(\lvert X' \rvert)$ is open, and thus induces an open substack $q(X') \subseteq Y$ such that $X'\to q(X')$ is a good moduli space.
    \end{enumerate}
\end{proposition}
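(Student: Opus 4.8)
The plan is to reduce both parts to point-set topology, leaning only on the base-change stability of good moduli spaces (Lemma~\ref{Lem:GMS_base-change}) and on the surjectivity and universal closedness of $q$ recorded in Lemma~\ref{lem:gms_prop}(i). I will use freely that open substacks of an Artin stack $Z$ are classified by open subsets of $\lvert Z\rvert = \lvert Z_\cl\rvert$, and that forming the preimage of an open substack under a morphism is compatible with passing to underlying topological spaces; these are instances of topological invariance and are what make the notion of saturatedness, although phrased on $\lvert X\rvert$, interact correctly with the derived structure.

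For part (i), I would first observe that $q^{-1}(Y') = Y'\times_Y X$ is an open substack of $X$, since open immersions are stable under base-change, and that $\lvert q^{-1}(Y')\rvert = \lvert q\rvert^{-1}(\lvert Y'\rvert)$. To check saturatedness it then suffices to compute $q\big(\lvert q\rvert^{-1}(\lvert Y'\rvert)\big)$; since $q$ is surjective by Lemma~\ref{lem:gms_prop}(i), this image equals $\lvert Y'\rvert$, and taking preimages once more returns $\lvert q^{-1}(Y')\rvert$.

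For part (ii), I would start by noting that $\lvert q\rvert$ is surjective and closed (being universally closed) by Lemma~\ref{lem:gms_prop}(i), hence submersive, since a surjective closed continuous map is a topological quotient map. Given a saturated open $X'\subseteq X$, saturatedness says $\lvert q\rvert^{-1}\big(q(\lvert X'\rvert)\big) = \lvert X'\rvert$, which is open, so submersiveness of $\lvert q\rvert$ forces $q(\lvert X'\rvert)$ to be open in $\lvert Y\rvert$ and thus to define an open substack $q(X')\subseteq Y$. Next I would identify $X'$ with $q^{-1}(q(X'))$: both are open substacks of $X$, and by saturatedness (together with part (i)) they share the underlying open subset $\lvert X'\rvert$, hence coincide. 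Finally $X' \simeq q(X')\times_Y X \to q(X')$ is the base-change of the good moduli space $q$ along the open immersion $q(X')\hookrightarrow Y$, so it is a good moduli space by Lemma~\ref{Lem:GMS_base-change}.

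The only input that is not purely formal is that $q$ is surjective and universally closed, which we import through Lemma~\ref{lem:gms_prop}(i) and which ultimately rests on Alper's classical results; granting that, the whole argument is topology plus the classification of open substacks. The step I expect to need the most care is the identification $X' \simeq q^{-1}(q(X'))$ in part (ii) as \emph{derived} open substacks: one must be comfortable that an open substack of $X$ is determined by its underlying open subset of $\lvert X\rvert = \lvert X_\cl\rvert$, and that saturatedness is precisely the condition making this identification valid, so that no information in the derived directions is lost.
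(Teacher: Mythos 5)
Your argument is correct and is precisely the spelling-out of what the paper dismisses with ``This is clear from the definition'': part (i) from surjectivity of $q$, part (ii) from the fact that a surjective universally closed map is submersive on topological spaces (so saturated opens have open images), followed by base-change of the good moduli space along the resulting open immersion via Lemma~\ref{Lem:GMS_base-change}. The one point you rightly flag --- that the derived open substacks $X'$ and $q^{-1}(q(X'))$ coincide because open substacks are determined by their underlying open subsets of $\lvert X\rvert=\lvert X_\cl\rvert$ --- is exactly the content of saturatedness here, and your treatment of it is fine.
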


\begin{proof}
    This is clear from the definition.
\end{proof}

The following proposition describes the conditions under which good moduli spaces on an open cover can be glued (cf. \cite[Proposition~7.9]{AlperGood}).

\begin{proposition}
    Let $X$ be an Artin stack, and $\lbrace U_i \rbrace_{i \in I}$ an open cover by substacks such that for each index $i$ there exists a good moduli space $q_i \colon U_i \to V_i$. Then there exists a good moduli space $q \colon X \to Y$ and open substacks $Y_i \subseteq Y$ such that $Y_i \simeq V_i$ as $Y$-stacks and we have Cartesian squares
    \begin{align} \label{eqn:loc 3.8}
        \xymatrix{
        U_i \ar[d]_-{q_i} \ar[r] & X \ar[d]^-{q} \\
        V_i\simeq Y_i \ar[r] & Y 
        }
    \end{align}
    if and only if for any two indices $i,j \in I$, the intersection $U_{ij} := U_i \cap U_j = U_i \times_X U_j$ is a saturated open substack of $U_i$ and $U_j$.
\end{proposition}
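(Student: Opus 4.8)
The statement is an ``if and only if'', so I would handle the two directions separately, with the interesting content being the ``if'' direction (gluing). For the ``only if'' direction, suppose a global good moduli space $q \colon X \to Y$ with the stated Cartesian squares exists. Then $U_i = q^{-1}(Y_i)$ and $U_j = q^{-1}(Y_j)$, so $U_{ij} = q^{-1}(Y_i \cap Y_j)$ is the preimage of an open of $Y$, hence saturated in $X$; since saturation is a condition on $|X|$ and $|U_i| \subseteq |X|$ is itself saturated (being $q^{-1}(|Y_i|)$), one checks that $|U_{ij}|$ is also the preimage under $q_i \colon |U_i| \to |V_i|$ of the open $q_i(|U_{ij}|) = |Y_i \cap Y_j|$, so $U_{ij}$ is saturated in $U_i$, and symmetrically in $U_j$.

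For the ``if'' direction, the plan is to build $Y$ by gluing the $V_i$ along the open substacks $V_{ij} := q_i(U_{ij}) \subseteq V_i$. First I would observe that, by the previous proposition, since $U_{ij} \subseteq U_i$ is saturated open, $q_i(|U_{ij}|)$ is open in $|V_i|$ and the restriction $q_i|_{U_{ij}} \colon U_{ij} \to V_{ij}$ is a good moduli space; symmetrically $q_j|_{U_{ij}} \colon U_{ij} \to V_{ij}'$ is a good moduli space onto an open substack of $V_j$. By the universality of good moduli spaces (Theorem~\ref{thm:universality of gms}), or more precisely by the uniqueness-up-to-contractible-choice of good moduli spaces, there is a canonical isomorphism $\varphi_{ij} \colon V_{ij} \xrightarrow{\sim} V_{ij}'$ of algebraic spaces compatible with $q_i, q_j$ on $U_{ij}$. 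On triple overlaps $U_{ijk}$, the cocycle condition $\varphi_{ik} = \varphi_{jk} \circ \varphi_{ij}$ follows again from universality applied to $U_{ijk}$ (all the relevant intersections are saturated opens, hence admit good moduli spaces, and both composites are \emph{the} map induced on good moduli spaces by the identity on $U_{ijk}$). This gives gluing data for the $V_i$, producing an algebraic space $Y$ with open substacks $Y_i \simeq V_i$ and $Y_i \cap Y_j \simeq V_{ij}$.

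Next I would assemble the morphism $q \colon X \to Y$. The maps $U_i \xrightarrow{q_i} V_i \simeq Y_i \hookrightarrow Y$ agree on overlaps by construction of the $\varphi_{ij}$, so they glue to $q \colon X \to Y$ (using that $X = \bigcup U_i$ and that morphisms to $Y$ form a sheaf). By construction the squares~\eqref{eqn:loc 3.8} commute; they are Cartesian because $q^{-1}(Y_i) = q_i^{-1}(V_i) \times_{\text{glue}} \cdots$, i.e.\ locally on $X$ the fiber product $Y_i \times_Y X$ is covered by the $U_j \cap q^{-1}(Y_i)$, and the saturation hypothesis on $U_{ij}$ forces $q^{-1}(Y_i) = U_i$ exactly (a point of $U_j$ maps into $Y_i = q(U_i)$ iff it lies in the saturated set $U_{ij}$, which sits inside $U_i$). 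Finally, $q$ is a good moduli space: the property of being a good moduli space morphism is local on the target by Lemma~\ref{lem:gms_prop}(ii) (it is in particular Zariski-local), and $q^{-1}(Y_i) = U_i \to Y_i \simeq V_i$ is $q_i$, which is a good moduli space by hypothesis; qcqs-ness is likewise Zariski-local on the target. (One should also note $Y$ is an algebraic space: it is covered by the algebraic spaces $Y_i$ glued along open substacks.)

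\textbf{Main obstacle.} The technical heart is producing the canonical isomorphisms $\varphi_{ij}$ on double overlaps and verifying the cocycle condition on triple overlaps in the $\infty$-categorical setting --- i.e.\ promoting ``good moduli spaces are unique'' to coherent gluing data, rather than just isomorphisms of objects. The clean way is to invoke that the space of good moduli spaces for a fixed stack is empty or contractible (Corollary~\ref{Cor:GMS_cl} gives this flavor for Postnikov truncations; one wants the analogous statement comparing any two good moduli spaces, which follows from universality, Theorem~\ref{thm:universality of gms}), so that all the choices and compatibilities are automatically coherent. The secondary subtlety is the bookkeeping showing $q^{-1}(Y_i) = U_i$, which is exactly where the saturation hypothesis is used and must be invoked carefully.
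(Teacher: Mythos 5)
Your proposal is correct in substance and follows the same strategy as the paper: both directions rest on the fact that saturated open substacks inherit good moduli spaces, and on universality (Theorem~\ref{thm:universality of gms}) to identify the moduli spaces of the overlaps. The difference is in how the gluing is packaged. You glue the $V_i$ \v{C}ech-style, via isomorphisms $\varphi_{ij}$ on double overlaps and a cocycle condition on triple overlaps; as you yourself flag, in the $\infty$-categorical setting this is not literally sufficient --- one needs coherences of all orders --- and your proposed remedy (the space of good moduli spaces of a fixed stack is empty or contractible, a consequence of universality) is the right one. The paper sidesteps the issue with a device you may as well adopt outright: for every finite tuple $I=(i_1,\dots,i_n)$ set $U_I := U_{i_1}\times_X\cdots\times_X U_{i_n}$; the pairwise saturation hypothesis implies each $U_I$ is a saturated open of each $U_{i_k}$ (a finite intersection of saturated opens is saturated), hence admits a good moduli space $V_I$, and universality upgrades $I \mapsto (U_I \to V_I)$ to a diagram of Cartesian squares indexed by the poset of finite subsets under inclusion. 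The colimit of this diagram is the desired $q \colon X \to Y$, with all higher coherences built in, and $q$ is a good moduli space because the property is local on the target (Lemma~\ref{lem:gms_prop}(ii)). Your verification that $q^{-1}(Y_i) = U_i$ --- a point of $U_j$ mapping into $Y_i$ must lie in $q_j^{-1}(Y_i\cap Y_j) = U_{ij} \subseteq U_i$ by saturation of $U_{ij}$ in $U_j$ --- is exactly where the hypothesis is used, and your ``only if'' direction matches the paper's appeal to the preceding proposition.
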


\begin{proof}
    The only if direction follows from the preceding proposition.

    For the converse, for $I = (i_1,\dots,i_n)$, let $U_I$ be the intersection $U_{i_1} \times_X \cdots \times_X U_{i_n}$. Then $U_I \subset X$ is saturated, hence admits a good moduli space $V_I$. For $J$ another (ordered) indexing set $J = (j_1,\dots,j_m)$, write $J \leq I$ if $m \leq n$ and $\{j_1,\dots,j_m\} \subset \{i_1,\dots,i_n\}$. Then by Theorem \ref{thm:universality of gms}, for any $J \leq I$ we obtain a pullback square
    \begin{center}
        \begin{tikzcd}
            U_I \arrow[d] \arrow[r] & U_J \arrow[d] \\
            V_I \arrow[r] & V_J,
        \end{tikzcd}
    \end{center}
    and a diagram $I \mapsto (U_I \to V_I)$. Then the colimit $q \colon X \to Y$ of this diagram is a good moduli space by Lemma \ref{lem:gms_prop}, and it is clear that $q$ has the desired properties.
\end{proof}

\subsection{Descent of \'{e}tale morphisms} We proceed to describe conditions that ensure that \'{e}tale morphisms between stacks descend to \'{e}tale morphisms between their good moduli spaces.

Given a commutative diagram
\begin{align*}
        \xymatrix{
        X' \ar[r]^-{f} \ar[d]_-{q'} & X \ar[d]^-{q} \\
        Y' \ar[r]_-{g} & Y,
        }
\end{align*}
where $q,q'$ are good moduli spaces, we say that $f$ is \emph{strongly \'{e}tale} if the induced morphism $g$ is \'{e}tale and the square is Cartesian.

The following proposition is the translation of~\cite[Theorem~3.14]{AHR2} to the derived setting. We thank David Rydh for suggesting the following argument.

\begin{proposition}
    Consider a commutative diagram
    \begin{align} \label{eq:truncated Cartesian sq}
        \xymatrix{
        X' \ar[r]^-{f} \ar[d]_-{q'} & X \ar[d]^-{q} \\
        Y' \ar[r]_-{g} & Y,
        }
    \end{align}
    where $q',q$ are good moduli spaces. Let $x \in |X'|$ be a closed point such that:
    \begin{enumerate}
        \item $f$ is \'{e}tale and $f_\cl$ is representable in a neighborhood of $x$,
        \item $f(x) \in |X|$ is closed, and
        \item $f_\cl$ induces an isomorphism of stabilizer groups at $x$.
    \end{enumerate}
    Then there exists a saturated open neighborhood $U' \subseteq X'$ of $x$ such that $f|_{U'}$ is strongly \'{e}tale.
\end{proposition}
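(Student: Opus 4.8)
The plan is to reduce to the classical descent statement \cite[Theorem~3.14]{AHR2} and then lift its conclusion to the derived setting, exploiting that good moduli spaces are governed by classical truncations together with the rigidity of étale morphisms. First I would pass to classical truncations: by Theorem~\ref{Thm:GMSvsGMScl}(ii) the induced morphisms $q_\cl$ and $q'_\cl$ are good moduli spaces, hence $\sA$-good moduli spaces by Proposition~\ref{Prop:comparison between Alper and derived GMS}(i). Hypotheses (i)--(iii) involve only $f_\cl$, the topological point $x \in |X'| = |X'_\cl|$, and the (classical) stabilizer at $x$ --- in particular $f$ étale forces $f_\cl$ étale --- so \cite[Theorem~3.14]{AHR2} applies and yields a saturated open $U'_\cl \subseteq X'_\cl$ containing $x$ on which $f_\cl$ is strongly étale: there is an étale morphism $h \colon W'_\cl \to Y_\cl$, with $W'_\cl$ the classical good moduli space of $U'_\cl$, fitting into a Cartesian square together with $f_\cl|_{U'_\cl}$.

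Next I would lift this data. Open substacks of $X'$ correspond to open subsets of $|X'_\cl|$, so let $U' \subseteq X'$ be the open substack with truncation $U'_\cl$. Saturatedness of an open substack is a condition on the continuous map $|X'| \to |Y'|$, which is unaffected by $(-)_\cl$, so $U'$ is saturated; and $f|_{U'}$ is étale, being the restriction of an étale morphism. By the results on saturated open substacks above, $W' \coloneqq q'(U') \subseteq Y'$ is an open substack and $q'|_{U'} \colon U' \to W'$ is a good moduli space; by Theorem~\ref{Thm:GMSvsGMScl}(ii) its truncation $W'_\cl$ is the classical good moduli space of $U'_\cl$, hence agrees with the $W'_\cl$ produced above. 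Then, by topological invariance of the étale site of the derived algebraic space $Y$ (see e.g.\ \cite{LurieSpectral}), the étale morphism $h$ lifts uniquely to an étale morphism $\tilde{h} \colon \widetilde{W} \to Y$ of derived algebraic spaces with $\widetilde{W}_\cl \simeq W'_\cl$ over $Y_\cl$.

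It then remains to identify $W'$ with $\widetilde{W}$ as $Y$-spaces. The projection $\widetilde{W} \times_Y X \to \widetilde{W}$ is a good moduli space by Lemma~\ref{Lem:GMS_base-change}, while $\widetilde{W} \times_Y X \to X$ is étale, being the base change of $\tilde{h}$. Since $\tilde{h}$ is flat, passage to classical truncations commutes with this fibre product, so $(\widetilde{W} \times_Y X)_\cl \simeq \widetilde{W}_\cl \times_{Y_\cl} X_\cl \simeq W'_\cl \times_{Y_\cl} X_\cl \simeq U'_\cl$, the last equivalence being the classical Cartesian square. Thus $\widetilde{W} \times_Y X$ and $U'$ are two étale $X$-stacks with the same classical truncation $U'_\cl \to X_\cl$, so by topological invariance of the étale site of $X$ they are equivalent over $X$. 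Transporting the good moduli space structures $U' \to W'$ and $\widetilde{W} \times_Y X \to \widetilde{W}$ across this equivalence, and using the universal property of good moduli spaces for maps to the algebraic space $Y$ (Theorem~\ref{thm:universality of gms}) --- which identifies $W' \hookrightarrow Y' \to Y$ with the canonical map and hence with $\tilde{h}$ --- one obtains $W' \simeq \widetilde{W}$ compatibly over $Y$. Hence the induced morphism $g|_{W'} \colon W' \to Y$ is étale and the square with $f|_{U'}$ is Cartesian, i.e. $f|_{U'}$ is strongly étale.

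The main obstacle is the final paragraph: upgrading the classical Cartesian square to a derived one. This rests on the rigidity of étale morphisms --- topological invariance of the étale site, applied both over $Y$ (to build $\widetilde{W}$) and over $X$ (to identify $U' \simeq \widetilde{W} \times_Y X$) --- combined with the fact that derived good moduli spaces are forced by their classical truncations. A point to handle with care is that every identification be compatible with the originally given morphism $g$, not merely with some étale morphism $W' \to Y$; this is exactly where one invokes the universal property of the good moduli space of $U'$ for maps to $Y$. Everything else --- the first two paragraphs --- is routine once one grants the classical result and that openness and saturatedness are topological.
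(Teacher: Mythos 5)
Your proof is correct, but it takes a genuinely different route from the paper's. After the common first step (invoking \cite[Theorem~3.14]{AHR2} and lifting $U'_\cl$ to a saturated open $U' \subseteq X'$ by topological invariance), the paper proceeds by an explicit induction up the Postnikov tower: it describes $X_{\leq n+1}$, $X'_{\leq n+1}$, $Y_{\leq n+1}$, $Y'_{\leq n+1}$ as square-zero extensions classified by maps out of cotangent complexes, uses the base-change formula $(q'_{\leq n})_\ast f_{\leq n}^\ast M_{n+1} \simeq g_{\leq n}^\ast (q_{\leq n})_\ast M_{n+1}$ to see that $g_{\leq n+1}$ is \'etale, and then checks Cartesianness via the comparison map $h_{\leq n+1}$, which is \'etale and an isomorphism on classical truncations. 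You instead package all of this deformation theory into two applications of topological invariance of the \'etale site (once over $Y$ to produce $\widetilde{W}$, once over $X$ to identify $U' \simeq \widetilde{W} \times_Y X$) together with uniqueness/universality of good moduli spaces to pin down $W' \simeq \widetilde{W}$ compatibly with $g$. This is cleaner and is in fact the same strategy the paper itself uses for the derived \'etale slice theorem (Theorem~\ref{thm:derived slice thm}); what the paper's Postnikov argument buys is that it is self-contained at the level of square-zero extensions and makes visible exactly where the base-change formula for \ucd morphisms enters, whereas your argument outsources that to the rigidity of \'etale morphisms. One small point you should make explicit: to apply topological invariance over $X$ you want $U' \to X$ and $\widetilde{W} \times_Y X \to X$ to be representable \'etale, which holds because $f_\cl$ is representable near $x$ (so $U'$ may be shrunk into that locus) and representability is detected on classical truncations; this is the same level of care the paper takes implicitly.
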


\begin{proof}
    By \cite[Theorem~3.14]{AHR2}, there exists a saturated open neighborhood $U_\cl' \subseteq X'_\cl$ of $x$ such that $f_\cl|_{U_\cl'}$ is strongly \'{e}tale. By topological invariance, $U_\cl'$ is the classical truncation of a saturated open neighborhood $U' \subseteq X'$ of $x$. We now check that $f|_{U'}$ is strongly \'{e}tale. To simplify notation, without loss of generality, we assume from now on that $U' = X'$.

    For each $n \geq 0$, write $q_{\leq n} \colon X_{\leq n} \to Y_{\leq n}$ for the corresponding truncation so that $X_{\leq n+1}$ and $Y_{\leq n+1}$ are square-zero extensions of $X_{\leq n}$ and $Y_{\leq n}$ by modules $M_{n+1}$ and $(q_{\leq n})_\ast M_{n+1}$ respectively, by the proof of Theorem~\ref{Thm:GMSvsGMScl}. The same notation applies for $X'$ and $Y'$.

    We will use induction on $n \geq 0$ to prove that $f_{\leq n+1}$ is strongly \'{e}tale. So assume that the truncation $f_{\leq n}$ is strongly \'{e}tale. Note that $n=0$ corresponds to the classical case, which we have established above.

    Since $f$ is \'{e}tale, $X'_{\leq n+1}$ is a square-zero extension of $X'_{\leq n}$ by the module $f_{\leq n}^\ast M_{n+1}$. Thus, the four mentioned square-zero extensions are determined by morphisms
    \begin{align*}
        \BL_{X'_{\leq n}} \to f_{\leq n}^\ast M_{n+1}, \ \  & \BL_{X_{\leq n}} \to M_{n+1} \\
        \BL_{Y'_{\leq n}} \to (q'_{\leq n})_\ast f_{\leq n}^\ast M_{n+1}, \ \  & \BL_{Y_{\leq n}} \to (q_{\leq n})_\ast M_{n+1}. 
    \end{align*}

    Since $f_{\leq n}$ is strongly \'{e}tale, by base-change we have $$(q'_{\leq n})_\ast f_{\leq n}^\ast M_{n+1} \simeq g_{\leq n}^\ast (q_{\leq n})_\ast M_{n+1},$$
    which shows that $g_{\leq n+1}$ must be \'{e}tale.

    It remains to check that the truncation of the induced commutative square of the form~\eqref{eq:truncated Cartesian sq} up to degree $n+1$ is also Cartesian. 

    Consider the natural morphism $h_{\leq n+1} \colon X'_{\leq n+1} \to Y'_{\leq n+1} \times_{Y_{\leq n+1}} X_{\leq n+1}$. We know that $(h_{\leq n+1})_\cl$ is an isomorphism, so it suffices to check that $h_{\leq n+1}$ is \'{e}tale. We have a factorization 
    $$ f_{\leq n+1} \colon X'_{\leq n+1} \xrightarrow{ \ h_{\leq n+1} \ } Y'_{\leq n+1} \times_{Y_{\leq n+1}} X_{\leq n+1} \xrightarrow{ \ g'_{\leq n+1} \ } X_{\leq n+1},$$
    where $g'_{\leq n+1}$ is the base-change of $g_{\leq n+1}$. Since $f_{\leq n+1}, g'_{\leq n+1} $ are both \'{e}tale, the same holds for $h_{\leq n+1}$ and the induction is complete.

    This implies the \'{e}taleness of $g$, since it is equivalent to $g_\cl$ being \'{e}tale together with the natural morphism $g_\cl^\ast \pi_n(\oO_Y) \to \pi_n(\oO_Y')$ being an equivalence for all $n \geq 0$. Since both the latter condition and the condition that the square~\eqref{eq:truncated Cartesian sq} is Cartesian can be checked on the truncation up to degree $n$, the proof is complete by the above.
\end{proof}

\begin{remark}
    In future work, we plan to investigate the descent of general properties of morphisms between Artin stacks to their good moduli spaces and prove a derived analogue of a general Luna's fundamental lemma in the style of \cite[Theorem~A]{LunaRydh}. 
\end{remark}

\section{Applications and examples} \label{sec:4}

This section focuses on applications and examples of the theory developed thus far. These include a derived \'{e}tale slice theorem for good moduli spaces, an upgrade of the stabilizer reduction algorithm for derived stacks \cite{HRS}, derived geometric invariant theory, an existence criterion and quotients by group actions.

\subsection{A (fully) derived \'{e}tale slice theorem for good moduli spaces} Let $X_\cl$ be a classical Artin stack with an $\sA$-good moduli space $q_\cl \colon X_\cl \to Y_\cl$ such that $X_\cl$ has affine stabilizers, separated diagonal and is of finite presentation over a quasi-separated algebraic space $S$. For simplicity, we assume that $S=\Spec k$ with $k$ an algebraically closed field, even though the results of \cite{AHR2} treat more general bases as well. Let $x \in X_{\cl}$ be a closed point with stabilizer $G_x$ (which has to be reductive, since $X_\cl$ admits a good moduli space). 

The classical \'{e}tale slice theorem for good moduli spaces~\cite[Theorem~4.12]{AlperLuna},~\cite[Theorem~6.1]{AHR2} asserts that there exists an \'{e}tale morphism $\Phi_\cl \colon [U_\cl / G_x] \to X_\cl$, where $U_\cl$ is a classical affine scheme with a $G_x$-action, $u \in U_\cl$ is fixed by $G_x$ and maps to $x$ via $\Phi_\cl$, fitting in a Cartesian square
    \begin{align*}
        \xymatrix{
        [U_\cl / G_x] \ar[d]_-{q'} \ar[r]^-{\Phi_\cl} & X_\cl \ar[d]^-{q} \\
        U_\cl \git G_x \ar[r]^-{\phi_\cl} & Y_\cl,
        }
    \end{align*}
where the map $U_\cl \git G_x \to Y_\cl$ is \'{e}tale.

We can now obtain a derived version of this statement, which includes good moduli spaces. Derived versions of the \'{e}tale slice theorem have previously also been obtained in \cite[Lemma~4.2.5]{HL} and \cite[Theorem~1.13, Proposition~6.1]{FerrandPush}.

\begin{theorem} \label{thm:derived slice thm}
    Let $X$ be an Artin stack with a good moduli space $q\colon X \to Y$ such that $X_\cl$ satisfies the conditions listed above. Let $x \in |X|$ be a closed point and write $G_x$ for the classical stabilizer group of $x \in |X_\cl|$. 
    
    Then there exists an \'{e}tale morphism $\Phi \colon [U / G_x] \to X$, where $U$ is an affine scheme with a $G_x$-action, $u \in U$ is fixed by $G_x$ and maps to $x$ via $\Phi$, fitting in a Cartesian square
    \begin{align*}
        \xymatrix{
        [U / G_x] \ar[d]_-{q'} \ar[r]^-{\Phi} & X \ar[d]^-{q} \\
        U \git G_x \ar[r]^-{\phi} & Y,
        }
    \end{align*}
    where $U \git G_x$ denotes the good moduli space of $[U / G_x]$ and the morphism $U \git G_x \to Y$ is \'{e}tale.
\end{theorem}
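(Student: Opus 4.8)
The plan is to bootstrap from the classical \'{e}tale slice theorem of \cite{AHR2,AlperLuna} by climbing the Postnikov tower of $X$, exactly as in the proof of Theorem~\ref{Thm:GMSvsGMScl}. Applying the classical statement to $X_\cl$ yields an \'{e}tale $\Phi_\cl \colon [U_\cl/G_x] \to X_\cl$ with the required properties; the task is to deform this data compatibly along the sequence of square-zero extensions $X_\cl = X_{\leq 0} \to X_{\leq 1} \to \cdots$. First I would recall that for each $n$ the extension $X_{\leq n+1}$ of $X_{\leq n}$ is classified by a map $\BL_{X_{\leq n}} \to M_{n+1}$, and by Theorem~\ref{Thm:GMSvsGMScl} the good moduli space $Y_{\leq n+1}$ of $X_{\leq n+1}$ is the square-zero extension of $Y_{\leq n}$ classified by $\BL_{Y_{\leq n}} \to (q_{\leq n})_\ast M_{n+1}$. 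Since $\Phi_{\leq n} \colon [U_{\leq n}/G_x] \to X_{\leq n}$ is \'{e}tale, $[U_{\leq n}/G_x]$ carries a canonical square-zero extension by $\Phi_{\leq n}^\ast M_{n+1}$, pulled back from $X_{\leq n+1}$; this produces $U_{\leq n+1}$ as a $G_x$-equivariant derived affine scheme (affineness is preserved under square-zero extension, and one checks $G_x$-equivariance is maintained because the classifying map is pulled back equivariantly). Its good moduli space $U_{\leq n+1}\git G_x$ is then the corresponding square-zero extension of $U_{\leq n}\git G_x$, again by Theorem~\ref{Thm:GMSvsGMScl}.

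The next step is to verify that the newly constructed square of good moduli spaces remains Cartesian and that the induced map $U_{\leq n+1}\git G_x \to Y_{\leq n+1}$ is \'{e}tale. This is essentially the same argument used in the descent-of-\'{e}tale-morphisms proposition earlier in the paper: since $\Phi_{\leq n}$ is strongly \'{e}tale, base-change for \ucd morphisms (Lemma~\ref{Lem:ucd0_props}) gives $(q'_{\leq n})_\ast \Phi_{\leq n}^\ast M_{n+1} \simeq \phi_{\leq n}^\ast (q_{\leq n})_\ast M_{n+1}$, so the map classifying the extension $U_{\leq n+1}\git G_x$ is the pullback along $\phi_{\leq n}$ of the map classifying $Y_{\leq n+1}$; hence $\phi_{\leq n+1}$ is \'{e}tale, being a square-zero extension of the \'{e}tale $\phi_{\leq n}$ by a pulled-back module. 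Cartesianness is checked on the factorization $[U_{\leq n+1}/G_x] \to (U_{\leq n+1}\git G_x)\times_{Y_{\leq n+1}} X_{\leq n+1} \to X_{\leq n+1}$: both composites are \'{e}tale, so the first map $h_{\leq n+1}$ is \'{e}tale, and it is an equivalence on classical truncations by the classical slice theorem, hence an equivalence. Then $\Phi = \varinjlim \Phi_{\leq n}$, $U = \varinjlim U_{\leq n}$ (which is affine, being a filtered colimit of the $U_{\leq n}$ along closed immersions, equivalently its classical truncation is $U_\cl$ and it is the limit of its Postnikov tower), and $q' = \varinjlim q'_{\leq n}$ assemble to give the claim; that $q'$ is a good moduli space follows from the construction in Theorem~\ref{Thm:GMSvsGMScl} and Corollary~\ref{Cor:GMS_cl}, and \'{e}taleness of $\Phi$ and $\phi$ follows since \'{e}taleness can be checked on truncations together with the $\pi_n$-base-change conditions, all of which hold by construction.

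The main obstacle I anticipate is the bookkeeping required to make the inductive choices \emph{canonical} and mutually compatible, rather than merely existing at each finite stage: one wants a genuine map of Postnikov towers (equivalently a map of the associated pushout diagrams $X[M_{n+1}] \rightrightarrows X$ of the form appearing in Lemma~\ref{Lem:s0-pushout}), and this requires carefully tracking the counit $\Phi_{\leq n}^\ast(q_{\leq n})_\ast \to (q'_{\leq n})_\ast \Phi_{\leq n}^\ast$ and its naturality, precisely as in the commutativity check $q\circ\tilde{\alpha}\simeq\tilde{\beta}\circ\tilde{q}$ in Theorem~\ref{Thm:GMSvsGMScl}. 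A secondary subtlety is ensuring the whole construction is $G_x$-equivariant at every stage; this should follow formally because everything is obtained by pullback along the $G_x$-equivariant \'{e}tale map $\Phi_{\leq n}$, but it deserves explicit care. Finally, one should note that \'{e}taleness and the Cartesian property of the limit square reduce to the corresponding statements on all finite truncations (as in the earlier descent proposition), so no separate limit argument for these properties is needed beyond the colimit identifications for the stacks themselves.
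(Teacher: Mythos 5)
Your proposal is correct in outline, but it takes a genuinely different route from the paper's proof. You lift the classical slice $\Phi_\cl$ up the Postnikov tower of $X$, constructing at each stage the square-zero extension of $[U_{\leq n}/G_x]$ by $\Phi_{\leq n}^\ast M_{n+1}$ and then descending to good moduli spaces via the base-change identity $(q'_{\leq n})_\ast \Phi_{\leq n}^\ast M_{n+1}\simeq \phi_{\leq n}^\ast (q_{\leq n})_\ast M_{n+1}$; this works, but note that the first half of your induction is precisely a hands-on re-proof of topological invariance of the \'{e}tale site, and the second half is the paper's descent-of-\'{e}tale-morphisms proposition run as a construction rather than a verification. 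The paper instead lifts on the \emph{base}: it applies topological invariance once to promote $\phi_\cl\colon U_\cl\git G_x\to Y_\cl$ to an \'{e}tale map $\phi\colon V\to Y$, forms the pullback $V\times_Y X$, and identifies it as a quotient stack $[U/G_x]$ by the argument of \cite[Proposition~4.13]{HRS}. That direction makes the Cartesian square and the good moduli space property of $q'$ completely automatic (Lemma~\ref{Lem:GMS_base-change}), whereas your direction--lifting on the source--forces you to re-establish Cartesianness and the \'{e}taleness of $\phi_{\leq n+1}$ by hand at every stage. What your approach buys is independence from the quotient-stack recognition result of \cite{HRS} (you get the quotient presentation for free at each square-zero step); what it costs is the coherence bookkeeping you yourself flag, namely upgrading the stagewise constructions to an actual map of Postnikov towers before passing to the colimit. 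That issue is resolvable exactly as in the proof of Theorem~\ref{Thm:GMSvsGMScl} (each stage is a canonical pushout/pullback determined by the previous one via the counit), so I would not call it a gap, but it is the place where your write-up would need the most additional care.
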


\begin{proof}
    By~\cite[Theorem~4.12]{AlperLuna} and~\cite[Theorem~6.1]{AHR2}, there exists a classical morphism $\Phi_\cl \colon [U_\cl / G_x] \to X_\cl$, which is \'{e}tale, stabilizer-preserving and fits in a Cartesian square 
    \begin{align} \label{diag:loc 4.1}
        \xymatrix{
        [U_\cl / G_x] \ar[d]_-{q'_\cl} \ar[r]^-{\Phi_\cl} & X_\cl \ar[d]^-{q_\cl} \\
        U_\cl \git G_x \ar[r]^-{\phi_\cl} & Y_\cl,
        }
    \end{align}
    where the morphism $[U_\cl/G_x] \to U_\cl \git G_x$ is a good moduli space morphism and $U_\cl \git G_x \to Y_\cl$ is \'{e}tale (here we have implicitly used Proposition~\ref{Prop:comparison between Alper and derived GMS}(ii)).

    By topological invariance \cite[Lemma~4.11]{HRS}, $\phi_\cl \colon U_\cl \git G_x \to Y_\cl$ is the classical truncation of an \'{e}tale morphism $\phi \colon V \to Y$. Since the classical truncation of the fiber product $V \times_Y X$ is the quotient stack $[U_\cl / G_x]$, by the argument used in the proof of \cite[Proposition~4.13]{HRS}, it follows that $V \times_Y X$ is a quotient stack of the form $[U / G_x]$ for some affine scheme $U$ with a $G_x$-action, whose classical truncation is the $G_x$-scheme $U_\cl$.

   We thus have a Cartesian square 
    \begin{align*}
        \xymatrix{
        [U / G_x] \ar[d]_-{q'} \ar[r]^-{\Phi} & X \ar[d]^-{q} \\
        V \ar[r]_-{\phi} & Y,
        }
    \end{align*}
    whose classical truncation is the square~\eqref{diag:loc 4.1}.

    By Lemma~\ref{Lem:GMS_base-change}, $q'$ is a good moduli space morphism, as it is a base-change of the good moduli space morphism $q$, and the proof is complete.
\end{proof}

\begin{remark}
    In Subsection~\ref{subsection:group action examples}, we give a hands-on description of the good moduli space $U \git G_x$ of $[U/G_x]$.
\end{remark}

\subsection{Derived saturated blowups, stabilizer reduction and partial desingularization} Let $X$ be a derived Artin stack with a good moduli space $q \colon X \to Y$.

Stabilizer reduction refers to the process of producing a canonical stack $\tilde{X}$ together with a canonical morphism $\pi \colon \tilde{X} \to X$, which resolves the stackiness of $X$, meaning that $\tilde{X}$ is Deligne--Mumford and $\pi$ alters the original stack as little as possible. This has successfully been carried out within classical algebraic geometry originally for smooth quotient stacks obtained through Geometric Invariant Theory (GIT) in \cite{Kirwan} and then more generally for singular classical stacks in the presence of a good moduli space in \cite{EdidinRydh, Sav}. 

Subsequently, a derived stabilizer reduction procedure was developed in \cite{HRS}. However, the derived version only involves classical good moduli spaces in its statement and results. We can now give a unified, derived upgrade of the stabilizer reduction algorithm at the level of good moduli spaces as well.
\smallskip

We begin by recalling the definition of derived saturated blowups and setting up notation. These form the basic operation which is iteratively used in the stabilizer reduction process.  

\begin{definition}
    Let $X$ be an Artin stack. The \emph{saturated projective spectrum} of a quasi-coherent, graded $\sO_X$-algebra $A$ is defined as the largest open substack $\Proj^q_X A \subseteq \Proj_X A$ for which the morphism $\Proj^q_X A \to \Proj_X q^*q_*A$ induced by $q^*q_*A \to A $ is well-defined. In particular, we have a commutative diagram
    \begin{center}
        \begin{tikzcd}
            \Proj^q_X A \arrow[d] \arrow[r] & X \arrow[d, "q"] \\
            \Proj_Y q_* A \arrow[r] & Y.
        \end{tikzcd}
    \end{center}
\end{definition}

The following is a direct generalization of \cite[Proposition~3.4]{EdidinRydh} to the derived setting.

\begin{theorem} \label{thm:gms of sat proj}
    Suppose that $q \colon X \to Y$ is a good moduli space and $A$ a quasi-coherent, graded $\oO_X$-algebra such that $\pi_0 A$ is finitely generated. Then $\Proj_X^q A \to \Proj_Y q_\ast A$ is a good moduli space and the induced morphism of good moduli spaces is the natural map $\Proj_Y q_\ast A \to Y$.
\end{theorem}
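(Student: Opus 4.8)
The plan is to localize on $Y$ and reduce everything to a statement about the saturated Proj construction over an affine base, then invoke the already-established descent properties of good moduli spaces. First I would recall, from the definition of $\Proj^q_X A$ and the discussion in the appendix on derived projective spectra, that the formation of $\Proj_X A$, of $\Proj_Y q_\ast A$, and of the open substack $\Proj^q_X A \subseteq \Proj_X A$ all commute with flat (in particular open) base change on $Y$; for the last of these one uses the base-change compatibility of $q_\ast$ for a \ucd morphism (Lemma~\ref{Lem:ucd0_props}) together with the base-change of $\Proj$. Since being a good moduli space is fpqc-local on the target by Lemma~\ref{lem:gms_prop}(ii), and since $\Proj_Y q_\ast A \to Y$ is of the expected form after such base change, it suffices to treat the case where $Y = \Spec R$ is affine.

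With $Y$ affine, $\Proj_Y q_\ast A = \Proj (\Gamma(X, A))$ is covered by the basic affine opens $D_+(s)$ for $s$ ranging over a set of homogeneous generators of $\pi_0 A$ (this is where finite generation of $\pi_0 A$ is used: it guarantees $\Proj$ is covered by finitely many such charts and that each chart is a genuine affine scheme, not just quasi-affine). For each such $s$, pulling back $q$ along $D_+(s) \hookrightarrow \Proj_Y q_\ast A$ gives, by Lemma~\ref{Lem:GMS_base-change}, a good moduli space $q_s \colon X_s \to D_+(s)$, where $X_s$ is the preimage of $D_+(s)$ in $\Proj^q_X A$. The key local computation is then to identify $X_s$ with $\Spec_{X}$ of the degree-zero part of a suitable localization of $A$ — concretely $X_s \simeq \Spec_X\big( (A[s^{-1}])_0 \big)$, intersected with the locus where the map to $\Proj_Y q_\ast q^\ast$... — matching the classical picture of \cite[Proposition~3.4]{EdidinRydh}; once this identification is in place, $X_s \to D_+(s) = \Spec (\Gamma(X,A)[s^{-1}])_0$ is exactly the good moduli space produced by Lemma~\ref{lem:gms_prop}(iii) applied to the quasi-coherent $\oO_X$-algebra $(A[s^{-1}])_0$, because $\Gamma(X, (A[s^{-1}])_0) = (\Gamma(X,A)[s^{-1}])_0$ by the projection formula and $t$-exactness of $q_\ast$. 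Gluing these charts and using that the good moduli space is unique (Theorem~\ref{thm:universality of gms}, Corollary~\ref{Cor:GMS_cl}) yields that $\Proj^q_X A \to \Proj_Y q_\ast A$ is a good moduli space, and the compatibility with the maps down to $Y$ is immediate from the commutative square in the definition of $\Proj^q_X A$.

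The main obstacle I anticipate is the chart-level identification of $X_s$: one must check carefully that the \emph{saturation} condition defining $\Proj^q_X A$ (the requirement that $\Proj^q_X A \to \Proj_Y q^\ast q_\ast A$ be well-defined) corresponds, on the chart $D_+(s)$, exactly to passing to the degree-zero part of $A[s^{-1}]$ and nothing smaller, i.e. that the open substack cut out by well-definedness of the rational map on that chart is all of $\Spec_X (A[s^{-1}])_0$ when $s \in \pi_0 A$ comes from $\pi_0 q_\ast A$ — equivalently, that $s$ being in the image of $q^\ast q_\ast$ makes $D_+(s)$ lie entirely in the domain of definition. This is the derived counterpart of the elementary but slightly fiddly point in \cite{EdidinRydh}, and handling it requires knowing that $q_\ast$ commutes with the localization $A \mapsto A[s^{-1}]$ and with the degree decomposition, which follows from $q$ being qcqs and \ucd (Lemma~\ref{Lem:ucd0_props}: $q_\ast$ preserves filtered colimits, hence localizations, and the projection/base-change formulas). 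Everything else — the reduction to affine $Y$, the gluing, and the compatibility of the resulting map with $\Proj_Y q_\ast A \to Y$ — is formal given the results already assembled in Sections~\ref{sec:2} and~\ref{sec:3}.
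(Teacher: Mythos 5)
Your proposal follows essentially the same route as the paper's proof: reduce to affine $Y$ via fpqc-locality of good moduli spaces (Lemma~\ref{lem:gms_prop}(ii)), cover $\Proj_Y q_\ast A$ by charts $\Spec (q_\ast A)_{(f)}$, identify the preimage chart in $\Proj^q_X A$ as $\Spec_X A_{(\epsilon(q^\ast f))}$ (the paper resolves your ``main obstacle'' via the Cartesian diagrams of Proposition~\ref{Prop:Proj} and the triangle identity $q_\ast(\epsilon_A)\eta_{q_\ast A} \simeq \id$, which shows $q_\ast A_{(\epsilon(q^\ast f))} \simeq (q_\ast A)_{(f)}$), and conclude by Lemma~\ref{lem:gms_prop}(iii). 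The only slip is notational: the charts are indexed by homogeneous elements of $\pi_0(q_\ast A)$, not of $\pi_0 A$.
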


\begin{proof}
     Since we may work locally on $Y$ by Proposition~\ref{lem:gms_prop}(ii), we may assume that $Y$ is affine. Let $\epsilon \colon q^*q_*A \to A$ be the counit.
     
     For $f \in \pi_0(q_*A)$ homogeneous of positive degree, consider the following diagram with Cartesian squares
     \begin{center}
         \begin{tikzcd}
             \Spec_X A_{(\epsilon(q^*f))} \arrow[r] \arrow[d, "r_f"] & \Proj^q_X(A) \arrow[d, "r"] & \\
             \Spec_X (q^*q_*A)_{(q^*f)} \arrow[r] \arrow[d, "q'_f"] & \Proj_X(q^*q_*A) \arrow[d, "q'"] \arrow[r]  & X \arrow[d, "q"] \\
             \Spec (q_*A)_{(f)} \arrow[r]  & \Proj_Y(q_*A) \arrow[r] & Y
         \end{tikzcd}
     \end{center}
     where $q^*f \colon \oO_X[t] \to q^*q_*A$ is obtained from $f \colon \oO_Y[t] \to q_*A$ by applying $q^*(-)$.
     By Lemma \ref{lem:gms_prop}(ii) and Proposition \ref{Prop:Proj}, it suffices to show that $q'_fr_f$ is a good moduli space. 

     Note that we have a commutative diagram
     \begin{center}
         \begin{tikzcd}
             \oO_Y[t] \arrow[r, "\eta_{\oO_Y[t]}"] \arrow[d, "f"] & q_* \oO_X[t] \arrow[d, "q_*q^*f"] \\
             q_*A \arrow[r, "\eta_{q_*A} "] & q_*q^*q_*A \arrow[r, "q_*(\epsilon_A)"] & q_*A
         \end{tikzcd}
     \end{center}
     where $\eta$ is the unit of the adjunction $q^* \dashv q_*$. Now $q_*(\epsilon_A) \eta_{q_*A} \simeq \id_{q_*A} $. It follows that $(q_*A)_{(\epsilon(q^*f))} \simeq (q_*A)_{(f)}$, since $q_*$ commutes with taking the degree-zero part. Hence the claim follows from Lemma \ref{lem:gms_prop}(iii).
\end{proof}

We may now define saturated blow-ups along closed substacks. By the preceding theorem and the properties of Rees algebras established in \cite{Hekking}, they automatically admit good moduli spaces. See Appendix \ref{Sec:Blups} for a short overview on blow-ups and Rees algebras in derived algebraic geometry.

\begin{definition}
    Let $Z \to X$ be a closed immersion. The \emph{saturated blow-up} of $X$ along $Z$ is defined by $\Bl^q_Z X := \Proj_X^q \rR_{Z/X}$, where $\rR_{Z/X}$ denotes the Rees algebra associated to $Z\to X$. The \emph{exceptional divisor} of $\Bl^q_ZX$ is the restriction of the exceptional divisor $E_ZX \to \Bl_ZX$ of the blow-up of $X$ in $Z$ along the canonical inclusion $\Bl^q_ZX \subseteq \Bl_ZX$.
\end{definition}

We can describe the good moduli space of a saturated blow-up, which, when the closed substack is saturated, is particularly concrete.

\begin{proposition} \label{prop: push of Rees of sat}
    Let $Z \to X$ be a saturated closed immersion. We then have a natural equivalence of graded algebras $q_\ast \rR_{Z/X} \simeq \rR_{q(Z)/Y}$.
\end{proposition}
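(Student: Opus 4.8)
The plan is to deduce the equivalence from two inputs: the base-change compatibility of the derived Rees algebra, established in \cite{Hekking}, and the projection formula for good moduli space morphisms. Set $W \coloneqq q(Z) = \Spec_Y q_\ast \oO_Z$; this is a closed substack of $Y$ by the remark following Definition~\ref{def: saturated closed substack}. Since $Z$ is saturated, Definition~\ref{def: saturated closed substack} gives an equivalence $Z \simeq q^{-1}(W) = W \times_Y X$, so that the closed immersion $Z \to X$ is the base change along $q \colon X \to Y$ of the closed immersion $W \to Y$.

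First I would invoke the base-change property of the derived Rees algebra: the formation of $\rR_{(-)}$ along a closed immersion locally of finite presentation commutes with arbitrary base change, which yields an equivalence of graded $\oO_X$-algebras
\[ \rR_{Z/X} \simeq q^\ast \rR_{W/Y}. \]
The essential point---and the reason this works despite $q$ being far from flat---is that in derived algebraic geometry the Rees algebra (equivalently, the deformation to the normal bundle) is stable under \emph{arbitrary}, not merely flat, base change.

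Next I would apply $q_\ast$. Since $q$ is a good moduli space it is qcqs and \ucd, so by Lemma~\ref{Lem:ucd0_props} it satisfies the projection formula; combining this with condition~(ii) of Definition~\ref{def:gms}, namely $\oO_Y \simeq q_\ast \oO_X$, gives
\[ q_\ast \rR_{Z/X} \simeq q_\ast q^\ast \rR_{W/Y} \simeq \rR_{W/Y} \otimes_{\oO_Y} q_\ast \oO_X \simeq \rR_{W/Y}. \]
Tracking the grading through each equivalence (every functor involved respects it) and observing that the composite is multiplicative yields the asserted equivalence $q_\ast \rR_{Z/X} \simeq \rR_{q(Z)/Y}$ of graded algebras.

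The main obstacle I anticipate is bookkeeping rather than anything conceptual: one must check that (a) the base-change statement of \cite{Hekking} is available in the stated generality and gives an equivalence of graded \emph{algebras}, not merely of graded modules, and (b) the projection-formula identification $q_\ast q^\ast N \simeq N$ is compatible with the symmetric monoidal structures, so that for the algebra object $N = \rR_{W/Y}$ it upgrades to an equivalence of algebras. Point (b) is formal---$q^\ast$ is symmetric monoidal, $q_\ast$ is lax symmetric monoidal, and the unit $N \to q_\ast q^\ast N$ is a map of algebras whose underlying map of modules is the invertible projection-formula map---but it deserves to be spelled out. Alternatively, to sidestep infinite direct sums one may run the argument degree by degree, using that $q_\ast$ preserves filtered colimits (Lemma~\ref{Lem:ucd0_props}) and hence commutes with the grading decomposition.
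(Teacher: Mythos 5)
Your proof is correct and is essentially the paper's argument in algebraic rather than geometric clothing: the paper phrases the base-change input as ``the square $Z \to X$ over $q(Z) \to Y$ is Cartesian, hence so is the induced square of deformation spaces $D_{Z/X} \to D_{q(Z)/Y}$'' (which is exactly your $\rR_{Z/X} \simeq q^\ast \rR_{q(Z)/Y}$, stated for the extended Rees algebra and then truncated to non-negative degrees), and then concludes by observing that the induced map $D_{Z/X} \to D_{q(Z)/Y}$ is again a good moduli space by Lemma~\ref{Lem:GMS_base-change}, where you instead invoke the projection formula to get $q_\ast q^\ast \rR_{q(Z)/Y} \simeq \rR_{q(Z)/Y}$. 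These two closing moves are interchangeable consequences of Lemma~\ref{Lem:ucd0_props}, and the multiplicativity and grading caveats you flag are handled correctly.
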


\begin{proof}
    In the commutative diagram
    \begin{center}
        \begin{tikzcd}
            Z \arrow[r] \arrow[d] & X \arrow[d, "q"] & D_{Z/X} \arrow[d, "{q'}"] \arrow[l, "p"] \\
            q(Z) \arrow[r] & Y & D_{q(Z)/Y}, \arrow[l, "r"]
        \end{tikzcd}
    \end{center}
    the square on the left is Cartesian, hence so is the one on the right. It follows that $q'$ is a good moduli space, and hence
    \[ q_*\rR_{Z/X}^{\extd} \simeq q_*p_* \sO_{D_{Z/X}} \simeq r_*q'_*\sO_{D_{Z/X}} \simeq r_* \sO_{D_{q(Z)/Y}} \simeq \rR_{q(Z)/Y}^{\extd}. \]
    Since pushforwards commute with the functor $(-)_{\geq 0}$ that sends a $\ZZ$-graded algebra to the $\NN$-graded algebra obtained by discarding the pieces in negative degree, the claim follows.     
\end{proof}




From here on, we work over $\CC$ for the remainder of this subsection. We consider the primary case of interest where $Z = X^{\max}$, defined in \cite[Theorem~4.14, Definition~4.15]{HRS}. $X^{\max}$ is a canonical, closed substack of $X$ which parametrizes points with stabilizers of maximal dimension. As for classical stacks, $X^{\max}$ does not have to be saturated. 

The saturated blow-up $\hat{X} = \Proj_X^q \rR_{X^{\max}/X}$ is called the Kirwan blow-up of $X$, defined in \cite[Definition~6.5]{HRS}. We obtain the following result based on the above.

\begin{theorem} \label{thm:gms of kirwan blow-up}
    Let $X$ be an Artin stack, locally of finite presentation with a good moduli space $q \colon X \to Y$, such that $X_\cl$ is Noetherian. Then its Kirwan blow-up $\hat{X} = \Proj_X^q \rR_{X^{\max}/X} $ admits a good moduli space $\hat{q} \colon \hat{X} \to \hat{Y}$, fitting in a commutative diagram
    \begin{align*}
        \xymatrix{
        \hat{X} \ar[d]_-{\hat{q}} \ar[r] & X \ar[d]^-{q} \\
        \hat{Y} \ar[r] & Y.
        }
    \end{align*}
    In the case when $X^{\max}$ is saturated, $\hat{Y} = \Proj_Y q_\ast \rR_{X^{\max}/X} =  \Bl_{q(X^{\max})} Y \to Y$ is the blow-up of $Y$ along the scheme-theoretic image of $X^{\max}$.
\end{theorem}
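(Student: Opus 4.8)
The plan is to obtain the theorem as a direct application of Theorem~\ref{thm:gms of sat proj} to the graded $\oO_X$-algebra $A := \rR_{X^{\max}/X}$, with Proposition~\ref{prop: push of Rees of sat} (equivalently Corollary~\ref{cor:cor 4.7}) handling the last assertion. I would recall first that, by the construction in \cite{HRS}, $X^{\max} \to X$ is a closed immersion that is locally of finite presentation, so that the saturated blow-up $\hat X = \Proj^q_X \rR_{X^{\max}/X}$ is defined and, by the definition of the saturated projective spectrum, already comes equipped with the commutative square of the statement, with $\hat Y := \Proj_Y q_\ast \rR_{X^{\max}/X}$ in the bottom-left corner and $\hat Y \to Y$ the natural projection.

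The only hypothesis of Theorem~\ref{thm:gms of sat proj} left to check is that $\pi_0 \rR_{X^{\max}/X}$ is finitely generated as a graded algebra, and this is precisely where the assumption that $X_\cl$ is Noetherian enters. Working locally over an affine $\Spec B \to X$ with $\pi_0 B$ Noetherian, the properties of Rees algebras established in \cite{Hekking} identify $\pi_0 \rR_{X^{\max}/X}$ with the classical Rees algebra of the ideal cutting out $(X^{\max})_\cl$ in $\Spec \pi_0 B$, which is finitely generated over $\pi_0 B$ since the latter is Noetherian. Granting this, Theorem~\ref{thm:gms of sat proj} applies with $A = \rR_{X^{\max}/X}$ and gives that $\hat q \colon \hat X \to \hat Y$ is a good moduli space and that the induced morphism of good moduli spaces is the natural map $\hat Y \to Y$, which establishes the first part.

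For the final assertion, suppose $X^{\max}$ is saturated. Since $X^{\max}\to X$ is a closed immersion locally of finite presentation, Proposition~\ref{prop: push of Rees of sat} yields a natural equivalence of graded algebras $q_\ast \rR_{X^{\max}/X}\simeq \rR_{q(X^{\max})/Y}$, whence
\[ \hat Y = \Proj_Y q_\ast \rR_{X^{\max}/X} \simeq \Proj_Y \rR_{q(X^{\max})/Y} = \Bl_{q(X^{\max})} Y, \]
with $\hat Y \to Y$ the blow-up structure morphism; alternatively, this is exactly Corollary~\ref{cor:cor 4.7}.

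The main --- and essentially only --- obstacle is the finite-generation of $\pi_0 \rR_{X^{\max}/X}$: the rest is formal once Theorem~\ref{thm:gms of sat proj} is available, but this step genuinely combines the Noetherianity of $X_\cl$ with the behaviour of the derived Rees algebra under classical truncation, and I would take care to invoke only the precise statements from \cite{Hekking} rather than an informal ``derived Rees $=$ classical Rees on $\pi_0$'' slogan.
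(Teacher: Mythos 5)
Your proposal is correct and follows essentially the same route as the paper: apply Theorem~\ref{thm:gms of sat proj} to $A = \rR_{X^{\max}/X}$ after checking finite generation of $\pi_0 A$, then invoke Corollary~\ref{cor:cor 4.7} for the saturated case. The paper is terser on the finite-generation step, simply noting that $\pi_0 \rR_{X^{\max}/X}$ is finitely generated and in fact generated in degree $1$, whereas you spell out the reduction to the classical Rees algebra over a Noetherian ring; this is the same argument in more detail.
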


\begin{proof}
    Since $A = \rR_{X^{\max}/X}$ satisfies the condition that $\pi_0 A$ is finitely generated (and indeed in degree $1$!), Theorem~\ref{thm:gms of sat proj} applies and $\hat{X} = \Proj_X^q \rR_{X^{\max}/X}$ admits a good moduli space $\hat{Y} = \Proj_Y q_\ast \rR_{X^{\max}/X}$. Then Proposition~\ref{prop: push of Rees of sat} finishes the proof.
\end{proof}

Using the fact that the operation of Kirwan blow-up reduces the maximal stabilizer dimension of a stack by \cite[Theorem~6.7]{HRS}, the stabilizer reduction $\tilde{X}$ of $X$ is obtained by a finite sequence of iterated Kirwan blow-ups
$$ X_0 := X,\ X_1 = \hat{X}_0, \ldots,\ \tilde{X} = X_m = \hat{X}_{m-1}.$$

In particular, Theorem~\ref{thm:gms of kirwan blow-up} has the following implication.

\begin{corollary} \label{corr:gms of stab red}
    Let $X$ be a quasi-compact Artin stack, locally of finite presentation with a good moduli space $q \colon X \to Y$. Then the stabilizer reduction $\tilde{X}$ admits a good moduli space $\tilde{q} \colon \tilde{X} \to \tilde{Y}$.
\end{corollary}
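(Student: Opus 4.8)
The argument is an induction on the length $m$ of the chain of Kirwan blow-ups
\[ X_0 := X,\qquad X_1 = \hat{X}_0,\qquad \ldots,\qquad \tilde{X} = X_m = \hat{X}_{m-1}, \]
which is finite by \cite[Theorem~6.7]{HRS}. The base case $i=0$ is the hypothesis that $q\colon X\to Y$ is a good moduli space. For the inductive step, assuming that $X_i$ admits a good moduli space $q_i\colon X_i\to Y_i$, I would apply Theorem~\ref{thm:gms of kirwan blow-up} with $(X_i,q_i)$ in place of $(X,q)$ to obtain a good moduli space $q_{i+1}\colon X_{i+1}=\hat{X}_i\to Y_{i+1}$ together with a morphism $Y_{i+1}\to Y_i$ fitting in a commutative square over $q_i$. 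Composing the resulting tower $Y_m\to Y_{m-1}\to\cdots\to Y_0=Y$ with the structural projections then produces the desired $\tilde{q}\colon\tilde{X}\to\tilde{Y}:=Y_m$ lying over $q$.

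The point that makes the induction run is that the hypotheses of Theorem~\ref{thm:gms of kirwan blow-up} are inherited by each Kirwan blow-up: one must check that if $X_i$ is a quasi-compact Artin stack, locally of finite presentation over $\CC$ and with affine diagonal (the latter being implicit, since it is needed both for the HRS construction and for $X_i^{\max}$ to be defined), then so is $\hat{X}_i=\Proj^q_{X_i}\rR_{X_i^{\max}/X_i}$; since $\CC$ is Noetherian, such stacks have Noetherian classical truncation, which is exactly what Theorem~\ref{thm:gms of kirwan blow-up} requires. Here $\hat{X}_i$ is an open substack of the derived blow-up $\Bl_{X_i^{\max}}X_i=\Proj_{X_i}\rR_{X_i^{\max}/X_i}$, and the relevant Rees algebra is generated in degree one with $\pi_0$ finitely generated (see Appendix~\ref{Sec:Blups} and \cite{Hekking}, and the proof of Theorem~\ref{thm:gms of kirwan blow-up}), so this blow-up is proper and locally of finite presentation, in particular quasi-compact, over $X_i$; moreover its diagonal over $X_i$ is a closed immersion, hence affine, and its absolute diagonal is the composite of this relative diagonal with the affine base change of $\Delta_{X_i}$, so it has affine diagonal. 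All of these properties pass to the open substack $\hat{X}_i$, so $X_{i+1}$ satisfies the same standing hypotheses as $X_i$ and the induction continues.

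With these stability statements in place the induction closes, giving the result. The main bookkeeping point, rather than a true obstacle, is to confirm that the Kirwan blow-up produced by Theorem~\ref{thm:gms of kirwan blow-up} at stage $i$ really is the step $X_{i+1}=\hat{X}_i$ of the HRS stabilizer reduction chain: this holds because, by Theorem~\ref{Thm:GMSvsGMScl}, $(q_i)_\cl$ is the classical good moduli space of $(X_i)_\cl$ used in \cite{HRS}, so the two blow-ups have the same classical truncation and are both given by the $\Proj^q$ construction applied to $\rR_{X_i^{\max}/X_i}$; hence $\tilde{X}$ is precisely the iterated derived Kirwan blow-up to which the above argument applies.
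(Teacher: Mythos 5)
Your proof is correct and follows exactly the route the paper intends: the paper gives no explicit proof, simply presenting the corollary as the iterated application of Theorem~\ref{thm:gms of kirwan blow-up} along the finite chain of Kirwan blow-ups terminating by \cite[Theorem~6.7]{HRS}. Your additional verification that the standing hypotheses (quasi-compactness, local finite presentation, Noetherian classical truncation, affine diagonal) are inherited by each $\hat{X}_i$ is a sensible and accurate elaboration of what the paper leaves implicit.
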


\begin{remark}
    We expect there to be a stabilizer reduction of Artin stacks over a more general base than $\CC$ (in particular over $\ZZ$), and intend to come back to this in later work.
\end{remark}

\subsection{Geometric Invariant Theory (GIT) and tame stacks} We remark that an interesting special case of Theorem~\ref{Thm:GMSvsGMScl} and Proposition~\ref{Prop:comparison between Alper and derived GMS} covers Noetherian Artin stacks $X$ with quasi-affine diagonal whose classical truncation is obtained by GIT \cite{MFK}, i.e., is of the form $X_\cl = [Q^{ss} / G]$, where $Q^{ss} \subseteq Q$ is the locus of semistable points for a linearized action of a complex reductive group $G$ on a projective variety $Q$. This includes a plethora of stacks commonly encountered in the literature and provides natural derived enhancements of their corresponding GIT quotients. For instance, moduli spaces of sheaves (cf.~\cite{HuyLehn}) fall under this umbrella, with Example~\ref{example:moduli of sheaves} below being an explicit example.

Since moduli stacks of stable maps are Deligne--Mumford quotient stacks that can be constructed by GIT methods, we have the following example of this phenomenon.

\begin{theorem} \label{thm:3.12}
    Let $W$ be a complex projective variety, $\beta \in H_2(W,\ZZ)$ and $g,n \in \NN$. The derived Deligne--Mumford stack $\mathbb{R} \mM_{g,n}(W, \beta)$ of $n$-pointed stable maps of genus $g$ and class $\beta$ to $W$ (see \cite[Definition~2.6]{DerivedMgn}) admits a good moduli space $\mathbb{R} M_{g,n}(W, \beta)$, which is a derived enhancement of the classical good moduli space $M_{g,n}(W, \beta)$.
\end{theorem}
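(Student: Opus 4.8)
The plan is to exploit Theorem~\ref{Thm:GMSvsGMScl}, which reduces the existence of a good moduli space for a derived Artin stack entirely to its classical truncation. First I would observe that the classical truncation of $\mathbb{R}\mM_{g,n}(W,\beta)$ is the usual Kontsevich moduli stack $\mM_{g,n}(W,\beta)$ of stable maps, which is a proper Deligne--Mumford stack of finite type over $\CC$. Being Deligne--Mumford, it has finite inertia, so by Keel--Mori it admits a coarse moduli space $M_{g,n}(W,\beta)$; and the coarse space morphism $q_\cl \colon \mM_{g,n}(W,\beta) \to M_{g,n}(W,\beta)$ is an $\sA$-good moduli space morphism (for a tame/Deligne--Mumford stack in characteristic zero, the coarse space is a good moduli space since $q_{\cl,\ast}^{\heartsuit}$ is exact---stabilizers are finite and linearly reductive over $\CC$). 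Alternatively, one invokes that $\mM_{g,n}(W,\beta)$ is a GIT quotient of the form $[Q^{\ss}/G]$ with $G$ reductive and the action having finite stabilizers, so Alper's theory directly gives the good moduli space.

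Next I would check that the hypotheses of Theorem~\ref{Thm:GMSvsGMScl}(i) are met, namely that $q_\cl$ is qcqs and that $M_{g,n}(W,\beta)$ is a (classical) algebraic space---indeed it is a projective scheme. Moreover $\mM_{g,n}(W,\beta)$ has quasi-affine (even finite) diagonal, so by Proposition~\ref{Prop:comparison between Alper and derived GMS}(iii) the $\sA$-good moduli space $q_\cl$ is a good moduli space in the sense of Definition~\ref{def:gms}. Applying Theorem~\ref{Thm:GMSvsGMScl}(i) then produces a derived good moduli space $q \colon \mathbb{R}\mM_{g,n}(W,\beta) \to \mathbb{R}M_{g,n}(W,\beta)$ with $q_\cl \simeq q_\cl$; denote its target $\mathbb{R}M_{g,n}(W,\beta)$. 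By construction (the Postnikov/square-zero procedure in the proof of Theorem~\ref{Thm:GMSvsGMScl}), the classical truncation of $\mathbb{R}M_{g,n}(W,\beta)$ is $M_{g,n}(W,\beta)$, so $\mathbb{R}M_{g,n}(W,\beta)$ is a derived enhancement of the classical good moduli space, as claimed. Finally, by Theorem~\ref{Thm:GMSvsGMScl}(ii) and Corollary~\ref{Cor:GMS_cl} this derived good moduli space is canonical (unique up to contractible choice).

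The only genuine point requiring care---and the main obstacle---is verifying that $q_\cl$ is an $\sA$-good moduli space morphism, i.e.\ that $(q_\cl)_\ast^{\heartsuit}$ is exact on $\QCoh(\mM_{g,n}(W,\beta))^{\heartsuit}$. This is where finiteness of the stabilizers and characteristic zero enter: over $\CC$ every finite group is linearly reductive, so taking invariants is exact, which is precisely the condition that the coarse moduli space of a tame Deligne--Mumford stack is a good moduli space (cf.\ \cite{AlperGood}, or the fact that Deligne--Mumford stacks over $\CC$ are tame). Once this is in hand, the rest is a formal application of the machinery of Section~\ref{sec:3}; no derived calculations beyond those already packaged in Theorem~\ref{Thm:GMSvsGMScl} are needed. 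I would also remark, for completeness, that since $\mM_{g,n}(W,\beta)$ is proper the good moduli space $M_{g,n}(W,\beta)$ is proper as well, consistent with it being a projective scheme in the literature.
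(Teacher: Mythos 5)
Your proposal is correct and follows essentially the same route as the paper: the paper derives Theorem~\ref{thm:3.12} as an instance of the general principle that a Noetherian Artin stack with quasi-affine diagonal whose classical truncation admits an $\sA$-good moduli space (here via the GIT/Deligne--Mumford quotient-stack description of $\mM_{g,n}(W,\beta)$, or equivalently via tameness of Deligne--Mumford stacks with finite inertia in characteristic $0$, as noted in the remark following the theorem) admits a derived good moduli space by Proposition~\ref{Prop:comparison between Alper and derived GMS} and Theorem~\ref{Thm:GMSvsGMScl}. Your verification of the hypotheses (finite, hence quasi-affine, diagonal; Noetherianity; quasi-separatedness of the coarse space) is exactly the content the paper leaves implicit.
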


More generally, we say that an Artin stack $X$ is \emph{tame} if $X_\cl$ is tame, i.e., if $X_\cl$ has finite inertia and pointwise linearly reductive stabilizers. A \emph{tame moduli space} for $X$ is a good moduli space $q \colon X \to Y$ such that $q$ has finite diagonal (or equivalently quasi-finite and separated diagonal by~\cite[Theorem~8.3.2]{AlperAdequate}).

\begin{theorem}
    Let $X$ be a derived tame stack. Then $X$ admits a tame moduli space.
\end{theorem}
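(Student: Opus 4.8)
The plan is to reduce the statement to the classical tame case via Theorem~\ref{Thm:GMSvsGMScl}(i) and then check that the good moduli space produced there inherits finiteness of the diagonal. First I would observe that, since $X$ is tame, by definition $X_\cl$ is a tame classical Artin stack, hence by the classical theory (Abramovich--Olsson--Vistoli, or \cite{AlperAdequate}) it admits a tame moduli space $q_\cl \colon X_\cl \to Y_\cl$; in particular $q_\cl$ is a good (indeed $\sA$-good) moduli space morphism with finite diagonal, and $q_\cl$ is of cohomological dimension zero. Note that a tame stack has quasi-affine — in fact, after passing to a suitable atlas, affine — diagonal, so Proposition~\ref{Prop:comparison between Alper and derived GMS} and Proposition~\ref{Prop:quasi-aff_cd_ucd_$t$-exact} apply and this $q_\cl$ is also a good moduli space in our sense.

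Next I would apply Theorem~\ref{Thm:GMSvsGMScl}(i) to produce a good moduli space $q \colon X \to Y$ with $q_\cl \simeq q_\cl$, built as the colimit over the Postnikov tower of the square-zero extensions constructed in the proof of that theorem. It remains to verify that $q$ has finite diagonal. The key point is that the diagonal $\Delta_q \colon X \to X \times_Y X$ has classical truncation $(\Delta_q)_\cl \simeq \Delta_{q_\cl}$, which is finite by the classical tameness hypothesis, and finiteness is insensitive to the derived structure: a morphism of derived Artin stacks is finite (equivalently, proper, representable and quasi-finite, equivalently affine and proper) if and only if its classical truncation is. Concretely, $\Delta_q$ is affine since $(\Delta_q)_\cl$ is (topological invariance for affineness, as already used in Lemma~\ref{Lem:qaffine_bc} via \cite[Cor.~1.1.6.3]{LurieSpectral}), and it is proper and quasi-finite since $(\Delta_q)_\cl$ is (properness and quasi-finiteness depend only on classical truncations by our conventions, cf. the definitions of separated and locally quasi-finite preceding Corollary~\ref{Cor:Zar_main_thm}). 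Hence $\Delta_q$ is finite, so $q$ is a tame moduli space for $X$.

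The main obstacle — really the only nontrivial input — is knowing that the \emph{classical} tame stack $X_\cl$ admits a classical tame moduli space, i.e.\ invoking the classical structure theorem correctly, and checking that this classical tame moduli space satisfies our (a priori stronger) definition of good moduli space, which requires $q_\cl$ to be \ucd rather than merely cohomologically affine. This is handled by the affineness of the diagonal of a tame stack together with Proposition~\ref{Prop:ucd0_props}(iv) (or Proposition~\ref{Prop:comparison between Alper and derived GMS}(ii)). Everything after that is formal: Theorem~\ref{Thm:GMSvsGMScl}(i) upgrades the good moduli space to the derived stack, and finiteness of $\Delta_q$ is detected on classical truncations, so no further work is needed.
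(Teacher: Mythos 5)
Your proposal is correct and follows essentially the same route as the paper: pass to $X_\cl$, invoke the classical tame theory to get a good moduli space there, lift it to $X$ via Theorem~\ref{Thm:GMSvsGMScl}(i), and observe that finiteness of the diagonal is detected on classical truncations. The paper's own proof is just a terser version of this; your write-up merely fills in the details (comparison of the classical notion with the \ucd-based definition via the affine diagonal of $q_\cl$, and the classical detection of finiteness) that the paper leaves implicit.
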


\begin{proof}
    Since $X_\cl$ is tame, it admits a good moduli space (see~\cite[Remark~4.2]{AlperGood} for a related discussion). Thus $X$ admits a good moduli space by Theorem~\ref{Thm:GMSvsGMScl}, which has finite diagonal and thus is tame.
\end{proof}

\begin{remark} Any Deligne--Mumford stack in characteristic $0$ with finite inertia is tame, hence admits a tame moduli space. In particular, this is the case for the stack $\mathbb{R} \mM_{g,n}(W, \beta)$ in Theorem~\ref{thm:3.12}.
\end{remark}

\subsection{Existence criteria} In the same vein as in the previous subsection, let $X$ be a Noetherian Artin stack with quasi-affine diagonal. By Theorem~\ref{Thm:GMSvsGMScl} and Proposition~\ref{Prop:comparison between Alper and derived GMS}, the existence of a good moduli space for $X$ is equivalent to the existence of an $\mathcal{A}$-good moduli space for $X_\cl$.

We may thus promote the existence criteria for $\mathcal{A}$-good moduli spaces of classical Artin stacks developed in \cite{AlperExistence} to the case of derived Artin stacks. Here is the translation of \cite[Theorem~A]{AlperExistence} to our context. Of course, one may obtain analogues of similar results, e.g., \cite[Theorem~4.1]{AlperExistence}.

\begin{theorem}
Let $X$ be a finitely presented Artin stack over $\CC$ with affine diagonal. Then $X$ admits a good moduli space $q \colon X \to Y$ with $Y$ separated if and only if $X_\cl$ is $\Theta$-reductive and $\mathsf{S}$-complete.

Moreover, $Y$ is proper if and only if $X_\cl$ satisfies the existence part of the valuative criterion for properness.
\end{theorem}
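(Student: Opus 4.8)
The plan is to deduce the statement from the classical existence criterion \cite[Theorem~A]{AlperExistence} via the dictionary between derived and classical good moduli spaces furnished by Theorem~\ref{Thm:GMSvsGMScl} and Proposition~\ref{Prop:comparison between Alper and derived GMS}. First observe that $X_\cl$ has affine diagonal: $\Delta_{X_\cl}$ is the base change of $\Delta_X$ along $(X \times X)_\cl \to X \times X$, and affine morphisms are stable under base change. Since moreover $X_\cl$ is of finite type over $\CC$, it is precisely the kind of stack to which \cite[Theorem~A]{AlperExistence} applies; note also that $\Theta$-reductivity and $\mathsf{S}$-completeness are imposed on $X_\cl$ itself in the statement, so they carry no derived content.

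For the first equivalence, suppose $X_\cl$ is $\Theta$-reductive and $\mathsf{S}$-complete. By \cite[Theorem~A]{AlperExistence} there is an $\sA$-good moduli space $q_\cl \colon X_\cl \to Y_\cl$ with $Y_\cl$ separated; since $X_\cl$ is Noetherian with quasi-affine diagonal and $Y_\cl$ is quasi-separated, $q_\cl$ is a good moduli space in the sense of Definition~\ref{def:gms} by Proposition~\ref{Prop:comparison between Alper and derived GMS}(iii). Theorem~\ref{Thm:GMSvsGMScl}(i) then produces a good moduli space $q \colon X \to Y$ with $q'_\cl \simeq q_\cl$, so $Y_\cl$ is separated and hence $Y$ is separated by our conventions. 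Conversely, if $q \colon X \to Y$ is a good moduli space with $Y$ separated, then $q_\cl$ is a good moduli space for $X_\cl$ by Theorem~\ref{Thm:GMSvsGMScl}(ii), hence an $\sA$-good moduli space by Proposition~\ref{Prop:comparison between Alper and derived GMS}(i), with $Y_\cl$ separated; \cite[Theorem~A]{AlperExistence} then forces $X_\cl$ to be $\Theta$-reductive and $\mathsf{S}$-complete.

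For the properness statement, grant that $Y$ (equivalently $Y_\cl$) is separated. It suffices to show that $Y$ is proper over $\CC$ if and only if $Y_\cl$ is, for then the last part of \cite[Theorem~A]{AlperExistence}, applied to the $\sA$-good moduli space $Y_\cl$ of $X_\cl$, concludes. Properness is the conjunction of separatedness, finite type, and universal closedness: the first two are detected on classical truncations, and so is the third, since the topological space $|Y \times_\CC T|$ depends only on $|T|$ and on $Y_\cl$. Here $Y$ is of finite type over $\CC$ by the finiteness properties of good moduli spaces applied to the finitely presented stack $X$ (cf.\ the remark following Lemma~\ref{lem:gms_prop} and \cite[Theorem~4.16]{AlperGood}).

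\textbf{Main obstacle.} There is essentially no substantially new content in this argument: the two points that require attention are matching Definition~\ref{def:gms} with Alper's notion of $\sA$-good moduli space, which is exactly what Proposition~\ref{Prop:comparison between Alper and derived GMS} accomplishes under the affine diagonal hypothesis, and verifying that separatedness, finite type, and universal closedness of the derived algebraic space $Y$ are insensitive to the derived structure. The one genuinely heavy input, the classical existence theorem itself, enters as a black box, so the ``hard part'' is really already done upstream.
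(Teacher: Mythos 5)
Your proposal is correct and follows exactly the route the paper takes: the paper states this theorem as the "translation" of \cite[Theorem~A]{AlperExistence} obtained by combining Theorem~\ref{Thm:GMSvsGMScl} with Proposition~\ref{Prop:comparison between Alper and derived GMS}, which is precisely your argument. You merely spell out a few details the paper leaves implicit (the affine diagonal of $X_\cl$ by base change, and that separatedness/properness of the derived algebraic space $Y$ are detected on $Y_\cl$), all of which are fine.
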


As an immediate application of this theorem, we obtain derived enhancements for good moduli spaces of moduli stacks parametrizing objects in abelian categories, when these stacks admit derived enhancements. 

Here is a concrete example.

\begin{theorem}
Let $W$ be a projective scheme over $\CC$ and $X$ be a quasi-compact component of the derived moduli stack of coherent sheaves on $W$. $X$ admits a good moduli space $q \colon X \to Y$, with $Y$ proper, which is a derived enhancement of the good moduli space $q_\cl \colon X_\cl \to Y_\cl$.
\end{theorem}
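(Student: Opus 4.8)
The plan is to reduce the statement to the derived existence criterion proved above, feeding in the known geometry of the derived moduli stack of coherent sheaves together with the classical existence results of \cite{AlperExistence}.

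\textbf{Step 1: the geometry of $X$.} First I would recall that, for $W$ projective over $\CC$, the derived moduli stack $\mM$ of coherent sheaves on $W$ is a derived Artin stack, locally of finite presentation over $\CC$, with affine diagonal; the general formalism of derived moduli of objects is due to \cite{ToenDerived}. The one point that needs a derived remark is affineness of the diagonal: by the argument of Lemma~\ref{Lem:qaffine_bc} (reducing ultimately to \cite[Cor.~1.1.6.3]{LurieSpectral} and topological invariance), a morphism of Artin stacks is affine if and only if its classical truncation is, so affineness of $\mM \to \mM \times \mM$ follows from the standard fact that $\mM_\cl$ has affine diagonal. Restricting to a quasi-compact open-and-closed substack $X \subseteq \mM$ then yields a finitely presented derived Artin stack over $\CC$ with affine diagonal (quasi-separatedness is automatic, the diagonal being affine, hence separated).

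\textbf{Step 2: existence and properness.} Next I would invoke \cite{AlperExistence}: the classical truncation $X_\cl$, being a quasi-compact component of the moduli stack of coherent sheaves on the projective scheme $W$, is $\Theta$-reductive and $\mathsf{S}$-complete and satisfies the existence part of the valuative criterion for properness. Feeding this into the derived existence criterion proved just above produces a good moduli space $q \colon X \to Y$ with $Y$ proper.

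\textbf{Step 3: identifying the derived enhancement.} Finally, by Theorem~\ref{Thm:GMSvsGMScl}(ii) the classical truncation $q_\cl \colon X_\cl \to Y_\cl$ is a good moduli space for $X_\cl$, hence an $\sA$-good moduli space by Proposition~\ref{Prop:comparison between Alper and derived GMS}(i), which by uniqueness of classical good moduli spaces coincides with the good moduli space of $X_\cl$ constructed in \cite{AlperExistence}; thus $q$ (and in particular $Y$) is a derived enhancement of $q_\cl$ (of $Y_\cl$). An equivalent route is to argue in the opposite direction: starting from the classical good moduli space $q_\cl \colon X_\cl \to Y_\cl$ with $Y_\cl$ proper, Theorem~\ref{Thm:GMSvsGMScl}(i) directly produces $q \colon X \to Y$ with $q_\cl$ recovered, and properness of $Y$ follows since properness over $\CC$ of a derived algebraic space is detected on the classical truncation.

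I expect the main obstacle to be foundational rather than mathematical: establishing cleanly that $\mM$ is a well-behaved derived Artin stack (locally of finite presentation with affine diagonal) and citing correctly the classical $\Theta$-reductivity, $\mathsf{S}$-completeness and valuative-criterion statements of \cite{AlperExistence}. Granting these, the theorem is a formal consequence of the results of this section; the only genuinely derived verification is the reduction of affineness of the derived diagonal to the classical statement, which is exactly the type of argument carried out in Lemma~\ref{Lem:qaffine_bc}.
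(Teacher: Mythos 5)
Your proposal is correct and essentially matches the paper's proof, which simply cites \cite[Lemma~7.20]{AlperExistence} for the affine diagonal and \cite[Theorem~7.23]{AlperExistence} for the classical good moduli space $q_\cl \colon X_\cl \to Y_\cl$ with $Y_\cl$ proper, and then lifts via Theorem~\ref{Thm:GMSvsGMScl} --- exactly your ``equivalent route'' in Step 3. Your primary route through $\Theta$-reductivity, $\mathsf{S}$-completeness and the derived existence criterion is a harmless detour that rests on the same results of \cite{AlperExistence}.
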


\begin{proof}
    By \cite[Lemma~7.20]{AlperExistence}, $X$ has affine diagonal and, by \cite[Theorem~7.23]{AlperExistence}, $X_\cl$ admits a good moduli space $q_\cl \colon X_\cl \to Y_\cl$ with $Y_\cl$ proper, so the claim follows by Theorem~\ref{Thm:GMSvsGMScl}.
\end{proof}

\subsection{Group actions and quotients} \label{subsection:group action examples}
Let $G$ be an affine group scheme which is flat and locally of finite presentation over a base scheme $S$. For a general definition of a group object in any $\infty$-topos and its classifying space, we encourage the reader to consult \cite{NikolausOOBundles}.
\begin{definition}
    We call $G$ \emph{linearly reductive} if $q \colon B_S G \to S$ is a good moduli space.
\end{definition}
From here on, assume that $G$ is linearly reductive. We would like to describe the good moduli space of affine quotients of $G$-actions.

\begin{example}
    Suppose that $G,S$ are classical. Since $B_SG$ is Noetherian and has affine diagonal, its good moduli space is an $\mathcal{A}$-good moduli space (Proposition \ref{Prop:comparison between Alper and derived GMS}). If $S = \Spec k$ for a field $k$, it follows that $G$ is linearly reductive, and reductive when $\chr k = 0$ (cf. \cite[Section~12]{AlperGood}). 
\end{example}

\begin{definition}
    Let $\QAlg^G(S)$ be the opposite of the category of schemes affine over $S$ with a $G$-action. 
\end{definition}

Observe that $\QAlg^G(S) \simeq \QAlg(BG)$. We obtain an adjunction
    \begin{center}
        \begin{tikzcd}
            \QAlg(S) \arrow[r, shift left, "q^*"]  & \QAlg^G(S) \arrow[l, shift left, "q_*"].
        \end{tikzcd}
    \end{center}   
Now $q^*$ endows $A \in \QAlg(S)$ with trivial $G$-action. It follows that $q_*$ is the derived invariants functor $A \mapsto A^G$. The counit induces a $G$-equivariant morphism $A^G \to A$.

Let $U = \Spec A$ be a scheme affine over $S$ with a $G$-action. Then $[U/G] \to \Spec_S q_*\oO_{[U/G]}$ is a good moduli space for $[U/G]$ by Lemma \ref{lem:gms_prop}, since $[U/G] \simeq \Spec_{B_SG} \oO_{[U/G]}$. The canonical map $\Spec q_*\oO_{[U/G]} \to U \git G := \Spec A^G$ is an equivalence, hence also $[U/G] \to \Spec A^G$ is a good moduli space.

\begin{remark}
    Suppose that $G$ is classical. We can give a hands-on description of $U \git G$ as follows. Using the description of $\Alg^G_{S} \coloneqq \QAlg^G(S)$ as the $\infty$-category associated to the 1-category of simplicial $G$-algebras endowed with the projective model structure, one can show that the adjunction 
    \[ i \dashv (-)^G \colon \Alg_{S} \rightleftarrows \Alg^G_{S} \]
    is induced by a Quillen adjunction between the respective model categories \cite[\S 3.1]{HRS}. Since $G$ is linearly reductive, we can thus compute $A^G$ by taking a simplicial resolution of $A$, and then taking classical invariants simplicial level-wise. 
    
    If $S = \Spec \CC$ and $U = \Spec A$ is of finite presentation, then a particular model can be obtained by starting with a $G$-invariant closed embedding $U_\cl \to \AA^N$, where $G$ acts on $\AA^N$ linearly. For each positive degree, we then add generators iteratively by adjoining $G$-representations. For more details, the reader can consult \cite[Section~3.7]{HRS} on this procedure. Taking classical $G$-invariants of these $G$-representations then induces a simplicial model for $A^G$.
\end{remark}

\begin{example} \label{example:moduli of sheaves} Here is a concrete example of the above. Consider the matrices
        \[
            X = \begin{pmatrix}
                x_{11} & x_{12} \\
                x_{21} & x_{22}
            \end{pmatrix}, \quad
            Y = \begin{pmatrix}
                y_{11} & y_{12} \\
                y_{21} & y_{22}
            \end{pmatrix}, \quad
            R = \begin{pmatrix}
                r_{11} & r_{12} \\
                r_{21} & r_{22}
            \end{pmatrix}\,,
        \]
    and let $\CC[X,Y]$ be the free $\CC$-algebra generated by the entries of $X$ and $Y$ and let $\CC[R]$ be the free $\CC$-algebra generated by the entries of $R$. Now let $\CC[R]\to \CC[X,Y]$ be the map sending $R$ to the commutator matrix $[X,Y] = XY-YX$. For example, $r_{12}$ maps to $[X,Y]_{12} = x_{11} y_{12} + x_{12} y_{22} - y_{11} x_{12} - y_{12} x_{22}$. Let $A$ be the $\CC$-algebra which is defined by the pushout diagram 
        \[
            \begin{tikzcd}
                \CC[R] \ar{d}\ar{r} & \CC[X,Y] \ar{d} \\
                \CC \ar{r} & A.
            \end{tikzcd}
        \]
        
    Then $G=\GL_{2,\CC}$ acts by simultaneous conjugation on $X,Y, R$ and hence on the derived affine scheme $\Spec A$. We thus have a derived good moduli space $[\Spec A/G]\to \Spec A^G$. One can see by direct computation that the ring $A^G$ is the pushout of the diagram  
        \[
            \begin{tikzcd}
                \CC[d,t] \ar{d}\ar{r} & (\CC[x,y]\otimes_{\CC} \CC[x,y])^{S_2} \ar{d}\\
                \CC \ar{r} & A^G,
            \end{tikzcd}
        \]
    where $d$ and $t$ are sent to zero via both maps and the symmetric group $S_2$ acts in the obvious way. This means that $\Spec A^G$ is a derived enhancement of the classical affine scheme $\Sym^2(\AA^2_\CC)$. 
    
    This should not come as a surprise: $[\Spec A / G]$ is the derived moduli stack of length $2$ sheaves on $\AA^2_\CC$ (for example, this follows by the same argument used in the proof of~\cite[Theorem~9]{KatzShi}), and $[\Spec A / G]_\cl$ is the classical moduli stack $\mM_2$ of length $2$ sheaves on $\AA^2_\CC$, whose good moduli space is given by the Hilbert--Chow morphism $\mM_2 \to \Sym^2(\AA^2_\CC)$.
\end{example}

If the base $S$ is classical, then $B_SG$ will in general not be algebraic unless $G$ is flat and locally of finite presentation, hence classical, as the following example illustrates.

\begin{example}  
Let $G = \Spec \CC[\epsilon]$ be the derived group scheme given by the derived dual numbers, i.e., the algebra $\CC[\epsilon] = \CC \oplus \CC\epsilon[1]$ where the group structure is given by considering $G$ as the loop stack of $\Spec \CC$. We model $\CC$-algebras by cdgas, using cohomological grading.

Suppose that $BG$ is algebraic. Then it admits a cotangent complex $\mathbb{L}_{BG}$. Moreover, since $(BG)_\cl = B(G_\cl) = \ast$ is affine, $BG$ is an affine scheme $BG = \Spec R$, where the cdga $R$ can be taken to be in standard form (see \cite[Def.~1.4]{HRS}) $[\ldots \to R^{-1} \xrightarrow{0} R^0=\CC]$ with $R^0 = \CC$, and the first differential is trivial.

The sequence $* \xrightarrow{u} BG \to *$ gives rise to an exact triangle
\begin{align*}
    u^\ast \BL_{BG} \lr \BL_{\ast} \lr \BL_{\ast / BG},
\end{align*}
so that $\BL_{\ast / BG} \simeq u^\ast \BL_{BG}[1]$. 
The Cartesian square
\begin{align} \label{eq:loc 4.2}
    \xymatrix{
    G \ar[r]^-{v} \ar[d]_-{v} & \ast \ar[d]^-{u} \\
    \ast \ar[r]_-{u} & BG,}
\end{align}
 gives an equivalence $\BL_G \simeq v^\ast \BL_{\ast/BG}$. We thus get an equivalence 
 \[ \BL_G \simeq v^\ast u^\ast \BL_{BG}[1]. \]
 By definition, $\BL_G \simeq \CC \cdot \mathrm{d} \epsilon [1]$ so that $h^{-1}(\BL_G) = \CC$. On the other hand, we see that $u^\ast \BL_{BG} = [ \ldots \to \Omega_{R^{-1}}|_\ast \xrightarrow{0} 0]$ and hence $h^{-1} \left( v^\ast u^\ast \BL_{BG}[1] \right) = 0$, which gives a contradiction. \smallskip
\end{example}


\appendix 

\section{Projective spectra} We call $\NN$-graded objects simply \emph{graded}, and refer the reader to \cite[\S 2]{Hekking} for background on graded objects in the derived setting. For a module $M$, we consider $\Sym(M)$ as a graded algebra which has $M$ in degree 1. For a graded algebra $A$ and positive integer $\delta$, write $A^{(\delta)}$ for the graded algebra such that $A^{(\delta)}_d = A_{\delta d}$. This definition extends to $\ZZ$-graded algebras/modules and integers $\delta$. A graded $k$-algebra $A$ is \emph{generated in degree 1} if the canonical map $\Sym_k(A_1) \to A$ is surjective. Recall that a line bundle is a quasi-coherent module which is locally free of rank 1.

\begin{remark}
\label{Rem:push_delta_twist}
    Let $q \colon X \to Y$ be a morphism of Artin stacks and $\delta \in \ZZ$. For $B$ a $\ZZ$-graded, quasi-coherent $\oO_Y$-algebra, it is clear that $q^*(B^{(\delta)}) \simeq (q^*B)^{(\delta)}$. We thus obtain, for any $\ZZ$-graded, quasi-coherent $\oO_X$-algebra $A$, a canonical map $(q_*A)^{(\delta)} \to q_*(A^{(\delta)})$. One shows this is an equivalence, by considering $\oO_Y$-linear maps $\oO_Y(-n) \to (q_*A)^{(\delta)}$, where $\oO_Y(-n)$ is the $\ZZ$-graded module which has $\oO_Y$ concentrated in degree $n$.
\end{remark}
 
 In \cite{Hekking}, the projective spectrum $\Proj A$ of a quasi-coherent, graded $\oO_X$-algebra $A$ generated in degree 1  is defined as
 \begin{equation}
 \label{Eq:Proj}
     \Proj  A \coloneqq [ (\Spec  A \setminus V(A_+)) / \GG_m ],
 \end{equation}
where $V(A_+) \to \Spec  A$ is the closed immersion associated to $A \to A_0$, where $A_0$ is the quasi-coherent $\oO_X$-algebra of degree-zero elements in $A$. In~\textit{loc.\ cit.,} $X$ is a scheme, but the same definition works for any stack $X$. It is shown that the projective spectrum is stable under base-change, and that $\Proj  A \to X$ is schematic. As in the classical case, $\Proj  A$ is covered by stacks of the form $\Spec  A_{(f)}$, where $A_{(f)}$ is the degree-zero part of $A_f$ and $f \in A$ is homogeneous of degree 1. In fact, the underlying classical stack of $\Proj A$ is the classical projective spectrum of $\pi_0A$.

If $A$ is not generated in degree 1, the formula \ref{Eq:Proj} gives a derived version of the stacky projective spectrum of $A$, which might no longer be schematic over the base \cite[\S 10.2.7]{Olsson}. Since for a good moduli space $q \colon X \to Y$ one might have that $q_*A$ is not generated in degree 1---even if $A$ is---we need to extend the definition of the projective spectrum to arbitrary graded algebras in order to make sense of the saturated $\Proj$. 

We do this as follows. Throughout, let $X$ be a stack, and $A$ a graded, quasi-coherent $\oO_X$-algebra.

\begin{definition}
    For $\delta \in \NN_{>0}$, let $\Proj ^{(\delta)}(A)$ be the following stack over $X$. For $g \colon T \to X$ with $T$ affine, let $\Proj ^{(\delta)}(A)(T)$ be the space of pairs $(L,\varphi)$, where $L$ is a line bundle on $T$ and $\varphi \colon g^*A^{(\delta)} \to \Sym(L)$ is a map of graded algebras which is surjective in degree 1. Naturality in $T$ is given by pulling back. It is clear that this is a stack, since graded algebras and their morphisms satisfy descent.
\end{definition}

Clearly, $\Proj^{(\delta)}(-)$ is stable under base-change. Moreover, for $\delta,\delta' \in \NN_{>0}$, we have a morphism
\[ i_{\delta,\delta'} \colon \Proj^{(\delta)} (A) \to \Proj^{(\delta\delta')} (A) \]
which on $g \colon T \to X$ sends $(L,\varphi)$ to $(L^{\otimes \delta'},\varphi^{(\delta')})$, where $\varphi^{(\delta')} \colon g^*A^{(\delta\delta')} \to \Sym(L^{\otimes \delta'})$ is determined by $\varphi$. Let $(\NN_{>0},\cdot)$ be the category associated to the poset on $\NN_{>0}$ determined by divisibility. We thus have a functor
\[ \Proj ^{(-)}(A) \colon (\NN_{>0},\cdot) \to \Stk_X  \]
into the category $\Stk_X$ of stacks over $X$.

\begin{lemma}
    \label{Lem:descend_linebs}
    The functor that sends an affine scheme $T$ to the space of line bundles on $T$ sends cofiltered limits to colimits. 
\end{lemma}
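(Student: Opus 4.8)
First I would reduce the statement to a claim about derived rings. Since $\Aff = \Alg^{\op}$, a cofiltered diagram $\{T_i = \Spec A_i\}_{i \in I}$ of affine schemes has affine limit $T \simeq \Spec A$ with $A \simeq \colim_i A_i$ the corresponding filtered colimit in $\Alg$; so the assertion is exactly that the functor $\Alg \to \Space$ carrying $R$ to the space of line bundles on $\Spec R$ preserves filtered colimits. The plan is then to identify this functor with $\Map_{\Stk}(-, B\Gm)$ and to invoke that $B\Gm$ is of finite presentation.

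For the identification, I would send a line bundle $L$ on $\Spec R$ to the $\Gm$-torsor of its local trivializations --- equivalently, to the morphism $\Spec R \to B\Gm$ classifying $L$ --- which gives a natural equivalence between the space of line bundles on $\Spec R$ (the maximal $\infty$-subgroupoid of $\QCoh(\Spec R)$ on objects Zariski-locally equivalent to $\oO$) and $\Map_{\Stk}(\Spec R, B\Gm)$, valid for an arbitrary derived ring $R$. Since $B\Gm = [\ast/\Gm]$ is a smooth algebraic stack of finite presentation over $\Spec \ZZ$, it is locally of finite presentation as a derived stack, which is precisely to say that $\Map_{\Stk}(-, B\Gm) \colon \Alg \to \Space$ preserves filtered colimits. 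Chaining the two facts gives
\[ \{\text{line bundles on } T\} \simeq \Map_{\Stk}(\Spec\,\textstyle\colim_i A_i,\, B\Gm) \simeq \colim_i \Map_{\Stk}(\Spec A_i, B\Gm) \simeq \colim_i \{\text{line bundles on } T_i\}, \]
which is the claim. An alternative route avoiding $B\Gm$: the functor $R \mapsto \Perf(R)$ from $\Alg$ to symmetric monoidal $\infty$-categories preserves filtered colimits, since perfect modules are the compact objects of $\Mod_R$ and $\Mod_{(-)}$ preserves colimits; the maximal-subgroupoid functor $\Cat_\infty \to \Space$ preserves filtered colimits because the terminal $\infty$-category is compact; and being invertible, discrete, and of rank one are each detectable at a finite stage of a filtered colimit of symmetric monoidal $\infty$-categories. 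Hence $R \mapsto \{\text{line bundles on } \Spec R\}$, being the maximal subgroupoid of the invertible discrete rank-one objects of $\Perf(R)$, preserves filtered colimits.

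The only genuinely delicate point in either route --- and where I expect to have to be careful --- is matching the ``space of line bundles'' used here, which in the derived setting carries higher homotopy coming from $\Gm$ and is therefore a bona fide space rather than a $1$-groupoid, with the representing stack $B\Gm$ (respectively, with the appropriate full subgroupoid of $\Perf$). Once that identification is pinned down, the conclusion is formal: it follows from local finite presentation of $B\Gm$, respectively from compactness of perfect complexes together with the stability of the relevant conditions under filtered colimits.
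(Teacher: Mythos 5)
Your primary argument is exactly the paper's proof: identify the space of line bundles on $\Spec R$ with $\Map_{\Stk}(\Spec R, B\GG_m)$ and invoke that $B\GG_m$ is locally of finite presentation, so that mapping out of it converts filtered colimits of rings into colimits of mapping spaces. The alternative route via compactness of perfect complexes is a reasonable bonus, but the paper's (and your main) argument is the one-line reduction to finite presentation of $B\GG_m$.
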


\begin{proof}
    This follows from the fact that $B\GG_m$ classifies line bundles and is locally of finite presentation in the sense of \cite[\S1]{Weil} or \cite[Def.~17.4.1.1]{LurieSpectral}.
\end{proof}

\begin{lemma}
\label{Lem:Projd}
    The morphism $i_{\delta,\delta'}$ is an open immersion.
\end{lemma}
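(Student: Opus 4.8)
The plan is to reduce to the case where $X$ is affine (the claim is local on $X$, since both $\Proj^{(\delta)}(A)$ and $\Proj^{(\delta\delta')}(A)$ are stable under base-change along $X$, and open immersions can be checked locally on the target). So assume $X = \Spec R$ and $A$ is a graded $R$-algebra. Since $\Proj^{(\delta\delta')}(A)$ is covered by the affine opens $\Spec A^{(\delta\delta')}_{(h)}$ as $h$ ranges over degree-$1$ elements of $A^{(\delta\delta')}$ (equivalently homogeneous elements of $A$ of degree $\delta\delta'$), it suffices to show that the preimage of each such open under $i_{\delta,\delta'}$ is a quasi-compact open subscheme of $\Proj^{(\delta)}(A)$. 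First I would identify, for a $T$-point $(L,\varphi)$ of $\Proj^{(\delta)}(A)$, when its image $(L^{\otimes\delta'},\varphi^{(\delta')})$ lands in $\Spec A^{(\delta\delta')}_{(h)}$: this happens exactly when the section $\varphi^{(\delta')}(h)$ of $L^{\otimes\delta'}$ is invertible (i.e.\ generates $L^{\otimes\delta'}$). Writing $h$ as a sum of products $f_1\cdots f_{\delta'}$ of degree-$\delta$ homogeneous elements $f_k \in A^{(\delta)}_1$, the section $\varphi^{(\delta')}(h)$ is the corresponding sum of products of the sections $\varphi(f_k) \in \Gamma(T,L)$.

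Thus the preimage of $\Spec A^{(\delta\delta')}_{(h)}$ in $\Proj^{(\delta)}(A)$ is the non-vanishing locus of the section $\varphi^{(\delta')}(h)$ of the universal line bundle over $\Proj^{(\delta)}(A)$, which is an open subscheme by the standard fact that the vanishing locus of a section of a line bundle is closed. Concretely, over the standard affine chart $\Spec A^{(\delta)}_{(f)}$ of $\Proj^{(\delta)}(A)$ (for $f \in A^{(\delta)}_1$ a degree-$\delta$ element of $A$), trivializing $L$ via $f$, the section $\varphi^{(\delta')}(h)$ becomes the image of the degree-zero element $h/f^{\delta'} \in (A^{(\delta\delta')})_{(h)} \cap (A^{(\delta)})_f$ — that is, $h/f^{\delta'} \in A^{(\delta)}_{(f)}$ after identifying degrees — and its non-vanishing locus is the basic open $\Spec\bigl((A^{(\delta)}_{(f)})_{h/f^{\delta'}}\bigr)$. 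Gluing these basic opens over the chart cover of $\Proj^{(\delta)}(A)$ shows the preimage is a quasi-compact open subscheme, and a short check (again on the charts, using that $(A^{(\delta\delta')})_{(h)}$ is obtained from $A^{(\delta)}_{(f)}$ by inverting $h/f^{\delta'}$) shows $i_{\delta,\delta'}$ restricts to an isomorphism onto it.

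The main obstacle I expect is purely bookkeeping: carefully matching the grading conventions so that $A^{(\delta\delta')} = (A^{(\delta)})^{(\delta')}$ and verifying that inverting a degree-$1$ element $h$ of $A^{(\delta\delta')}$ gives the same localization as inverting the degree-zero element $h/f^{\delta'}$ of $A^{(\delta)}_{(f)}$ — i.e.\ that $(A^{(\delta\delta')})_{(h)} \simeq (A^{(\delta)}_{(f)})_{h/f^{\delta'}}$ compatibly with the maps in the divisibility diagram. Once the indices are pinned down this is the familiar computation underlying the fact that $\Proj A \simeq \Proj A^{(\delta)}$ in the classical setting, and no derived subtlety enters beyond knowing that sections of line bundles have open non-vanishing loci and that the affine chart description of $\Proj^{(\delta)}$ from \cite{Hekking} carries over verbatim.
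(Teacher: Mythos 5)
Your overall strategy is genuinely different from the paper's. The paper never touches the affine charts: it verifies directly on the functor of points that $i_{\delta,\delta'}$ is formally \'{e}tale (lifting against square-zero extensions $T \to T[M]$), locally of finite presentation (via Lemma~\ref{Lem:descend_linebs}), and a monomorphism (the diagonal is an equivalence), and then invokes the derived characterization of open immersions as \'{e}tale monomorphisms locally of finite presentation. Your chart-based route is viable in principle, and your key observation --- that the preimage of the chart $\Spec A^{(\delta\delta')}_{(h)}$ is the locus where the section $\varphi^{(\delta')}(h)$ of $L^{\otimes\delta'}$ generates, hence an open substack --- is correct. But note that the chart description of $\Proj^{(\delta)}(A)$ for a general graded $A$ is not ``verbatim from \cite{Hekking}'' (which treats the quotient-stack $\Proj$ for algebras generated in degree $1$); in this paper it is Proposition~\ref{Prop:Proj}(ii)--(iii), whose proof of openness of the charts uses exactly the \'{e}tale-monomorphism argument you are trying to avoid. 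So your reduction does not bypass that work, it just relocates it.

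There are two concrete problems in the execution. First, the identification $(A^{(\delta\delta')})_{(h)} \simeq (A^{(\delta)}_{(f)})_{h/f^{\delta'}}$ is false in general; the correct statement is $(A^{(\delta)}_{(f)})_{h/f^{\delta'}} \simeq (A^{(\delta\delta')}_{(h)})_{f^{\delta'}/h}$, i.e.\ the piece of the preimage lying over the chart $\Spec A^{(\delta)}_{(f)}$ maps isomorphically onto the \emph{basic open} $D(f^{\delta'}/h)$ of $\Spec A^{(\delta\delta')}_{(h)}$, not onto the whole chart. (Try $A=k[x,y]$, $\delta=1$, $\delta'=2$, $f=x$, $h=y^2$: one side is $k[x/y]$, the other $k[x/y,y/x]$.) Relatedly, your closing appeal to ``the familiar computation underlying $\Proj A \simeq \Proj A^{(\delta)}$'' is a red flag: $i_{\delta,\delta'}$ is emphatically \emph{not} an equivalence for general $A$ (e.g.\ $A = k[y]$ with $y$ in degree $2$ gives $\Proj^{(1)}(A)=\emptyset \to \Proj^{(2)}(A)=\Spec k$), which is the entire reason the paper defines $\Proj(A)$ as a colimit. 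Second, even with the corrected local computation, covering $i^{-1}(V_h)$ by opens each of which maps by an open immersion to $V_h$ does not yet make $i^{-1}(V_h)\to V_h$ an open immersion: openness is local on the target, not the source, and you must still check that $i_{\delta,\delta'}$ is a monomorphism --- in the derived setting, that its diagonal is an equivalence, i.e.\ that the space of identifications $(L^{\otimes\delta'},\varphi^{(\delta')})\simeq(L'^{\otimes\delta'},\varphi'^{(\delta')})$ agrees with the space of identifications $(L,\varphi)\simeq(L',\varphi')$. This is precisely the step the paper isolates, and it is absent from your argument.
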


\begin{proof}
    We sketch the following argument. One shows that $i_{\delta,\delta'}$ is formally \'{e}tale by using the lifting property against square-zero extensions $T \to T[M]$. Using Lemma \ref{Lem:descend_linebs}, one shows that $i_{\delta,\delta'}$ is locally of finite presentation. Finally, one shows that $i_{\delta,\delta'}$ is a monomorphism by showing that the diagonal of $i_{\delta,\delta'}$ is equivalent to the identity. 
\end{proof}

With the same argument as in the proof of \cite[Prop.~5.5.5]{Hekking}, we have a canonical map
\[  [ (\Spec  A \setminus V(A_+)) / \GG_m ] \to \Proj^{(1)}(A). \]

\begin{definition}
    The projective spectrum $\Proj (A)$ is the colimit of $\Proj ^{(-)}(A)$.
\end{definition}

\begin{proposition} Let $X,A$ be as above.
\label{Prop:Proj}
    \begin{enumerate}
        \item For any $f \colon X' \to X$, the map $\Proj_{X'}(f^*A) \to X' \times_X \Proj_X (A)$ is an equivalence.
        \item $\Proj (A)$ is covered by $\{\Spec  A_{(f)} \mid f \in A_d, d \geq 1 \}$. 
        \item Each $\Spec A_{(f)} \to \Proj (A)$ is an open immersion, and consequently $\Proj(A) \to X$ is schematic.
        \item If $A$ is generated in degree 1, then
        \[ \Proj^{(1)} (A) \simeq \Proj (A) \simeq [ (\Spec  A \setminus V(A_+)) / \GG_m ] \]
        \item The underlying classical stack of $\Proj A$ is the classical projective spectrum of $\pi_0A$ over $X_\cl$.        
        \item In general, the canonical map
        \[ [ (\Spec  A \setminus V(A_+)) / \GG_m ] \to \Proj (A) \]
        is a good moduli space morphism.
    \end{enumerate}
\end{proposition}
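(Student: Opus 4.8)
The plan is to verify the two defining conditions (together with qcqs) Zariski-locally on the target $\Proj(A)$, using the standard affine charts. Write $P \coloneqq [(\Spec A \setminus V(A_+))/\GG_m]$ and let $q \colon P \to \Proj(A)$ denote the canonical map (constructed as in the proof of \cite[Prop.~5.5.5]{Hekking}; when $A$ is generated in degree $1$ it is the equivalence of part (iv)). By parts (ii) and (iii), the open substacks $\Spec A_{(f)} \hookrightarrow \Proj(A)$, with $f$ ranging over homogeneous elements of positive degree, form an open cover of $\Proj(A)$. So it suffices to show that $q$ restricts to a good moduli space morphism over each $\Spec A_{(f)}$, and then that the conditions glue along this cover.

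The key local input is an identification. Writing $\pi \colon \Spec A \setminus V(A_+) \to P$ for the tautological $\GG_m$-torsor, and unwinding the functor-of-points description of $\Proj(A) = \colim_\delta \Proj^{(\delta)}(A)$ and of $q$, one checks that a $T$-point $(L, \varphi \colon g^*A \to \Sym L)$ of $P$ lies in $q^{-1}(\Spec A_{(f)})$ if and only if $\varphi_d(f) \colon \oO_T \to L^{\otimes d}$ is an isomorphism, where $d = \deg f$; this is precisely the condition cutting out the $\GG_m$-invariant open $\Spec A_f \subseteq \Spec A \setminus V(A_+)$. Hence $q^{-1}(\Spec A_{(f)}) \simeq [\Spec A_f / \GG_m]$, with $\GG_m$ acting via the $\ZZ$-grading on $A_f$, and under this identification $q$ restricts to the morphism $\rho_f \colon [\Spec A_f / \GG_m] \to \Spec A_{(f)}$ induced by the inclusion $A_{(f)} = (A_f)_0 \hookrightarrow A_f$ of the degree-zero part. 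To see that $\rho_f$ is a good moduli space morphism, first note that $\GG_m$ is linearly reductive over $\ZZ$ (hence over $X$): for every $\Spec R \to \Spec\ZZ$, pushforward along $B(\GG_m)_R \to \Spec R$ is the $t$-exact functor sending a $\ZZ$-graded $R$-module to its degree-zero summand, so $q_{B\GG_m} \colon B\GG_m \to \Spec\ZZ$ is \ucd by Proposition~\ref{Prop:ucd0_equivalent}; it is qcqs since $\GG_m$ is affine, and $(q_{B\GG_m})_*\oO \simeq \oO$, so it is a good moduli space morphism, and by Lemma~\ref{Lem:GMS_base-change} so is $q_{B_X\GG_m} \colon B_X\GG_m \to X$. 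Regarding the $\ZZ$-graded $\oO_X$-algebra $A_f$ as a quasi-coherent $\oO_{B_X\GG_m}$-algebra $\widetilde{A_f}$, we have $\Spec_{B_X\GG_m}\widetilde{A_f} \simeq [\Spec A_f/\GG_m]$ and $(q_{B_X\GG_m})_*\widetilde{A_f} \simeq (A_f)_0 = A_{(f)}$, so Lemma~\ref{lem:gms_prop}(iii) gives the claim.

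It remains to glue. That $q$ is qcqs may be checked on $q_\cl \colon P_\cl \to \Proj(\pi_0 A)$ (using part (v) and that $(-)_\cl$ commutes with open immersions and quotient stacks), which is the classical $\Proj$-quotient morphism, visibly qcqs. That $q$ is \ucd follows because any $\Spec B \to \Proj(A)$ has quasi-compact image, hence factors through a finite union $\bigcup_{i=1}^n \Spec A_{(f_i)}$; pulling this cover back to $\Spec B$ and applying \cite[Lem.~A.1.3]{HalpernleistnerMapping} together with the fact that each $\rho_{f_i}$ is \ucd shows that $(q_B)_*$ preserves connectivity on the heart, since connectivity of a module over $\Spec B$ may be tested on an open cover. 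With $q$ now known to be qcqs and \ucd, base change (Lemma~\ref{Lem:ucd0_props}) identifies the restriction of $\oO_{\Proj(A)} \to q_*\oO_P$ over $\Spec A_{(f)}$ with $\oO_{\Spec A_{(f)}} \to (\rho_f)_*\oO_{[\Spec A_f/\GG_m]}$, which is an equivalence; being an equivalence is local, so $\oO_{\Proj(A)} \xrightarrow{\sim} q_*\oO_P$ and $q$ is a good moduli space morphism. (Once $q$ is known qcqs, one could instead invoke Lemma~\ref{lem:gms_prop}(ii) on the cover $\{\Spec A_{(f)}\}$, modulo its quasi-compactness hypothesis on the covering map.)

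I expect the main obstacle to be the identification $q^{-1}(\Spec A_{(f)}) \simeq [\Spec A_f/\GG_m]$ and the recognition of the restricted map as the $\GG_m$-invariants morphism $\rho_f$: this is the only place where the construction of $\Proj(A)$ as a filtered colimit of the stacks $\Proj^{(\delta)}(A)$ — and of the canonical map out of $P$ — genuinely enters, and it requires carefully matching the standard chart $D_+(f) \subseteq \Proj^{(d)}(A) = \Proj^{(1)}(A^{(d)})$ with the locus $\{\varphi_d(f)\text{ invertible}\}$ inside $P$. Everything else is formal: descent along an open cover, linear reductivity of the torus, and Lemma~\ref{lem:gms_prop}(iii).
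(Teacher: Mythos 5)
Your proposal establishes only part (vi) of the proposition. Parts (ii) and (iii) are invoked as known facts (``By parts (ii) and (iii), the open substacks \dots form an open cover''), part (v) is used to check that $q$ is qcqs, and parts (i) and (iv) are not mentioned at all. Since the statement to be proved is the full six-part proposition, this is a genuine gap, and not a cosmetic one: for a general graded algebra $A$ the charts $\Spec A_{(f)} \to \Proj(A)$ are not handed to you by the definition $\Proj(A) = \colim_\delta \Proj^{(\delta)}(A)$. One must actually construct the morphism $\Spec A_{(f)} \to \Proj^{(\delta)}(A)$ for $f \in A_\delta$ (via the graded map $A^{(\delta)} \otimes A_{(f)} \to A_{(f)}[t]$, $a \mapsto af^{-d}t^d$), prove that these charts cover $\Proj^{(\delta)}(A)$ by trivializing $L$ and finding an $f$ whose image is a unit, and prove that each chart is an open immersion — which in the paper goes through Lemma~\ref{Lem:Projd} and requires checking that the map is a formally \'etale monomorphism which is locally of finite presentation (the last point using Lemma~\ref{Lem:descend_linebs}). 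None of this appears in your write-up, and the argument you do give would be circular if (ii), (iii) and (v) were treated as part of the goal rather than as inputs. Part (i) is easy (base change commutes with each $\Proj^{(\delta)}$ and with the colimit) and (iv), (v) are citations, but they should still be recorded.

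For part (vi) itself your argument is correct and is essentially the paper's: both proofs rest on the Cartesian square identifying $q^{-1}(\Spec A_{(f)})$ with $[\Spec A_f/\GG_m]$, on the fact that $[\Spec A_f/\GG_m] \to \Spec (A_f)_0 = \Spec A_{(f)}$ is a good moduli space because $\GG_m$ is linearly reductive (Lemma~\ref{lem:gms_prop}(iii) applied over $B_X\GG_m$, which is exactly the description in Subsection~\ref{subsection:group action examples}), and on locality of the good moduli space property over the affine cover. Your only deviation is to carry out the gluing by hand rather than citing Lemma~\ref{lem:gms_prop}(ii) directly; your parenthetical remark about the quasi-compactness hypothesis in the fpqc-locality statement is a fair point, and your workaround (testing connectivity of $(q_B)_*M$ on a finite open cover of $\Spec B$) is sound.
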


\begin{proof}

    (i) follows from the fact that colimits and each $\Proj^{(\delta)}(-)$ commute with base-change. We may thus assume that $X$ is affine in the remaining points, say $X = \Spec R$.

    For (ii), it suffices to show that $\Proj^{(\delta)}(A)$ is covered by schemes of the form $\Spec A_{(f)}$, where $f \in A$ is homogeneous of degree $\delta$. Let $\psi \colon A^{(\delta)} \to \Sym_{A_{(f)}}(A_{(f)}) = A_{(f)}[t]$ be the map of graded $R$-algebras which sends an element $a \in A^{(\delta)}_d = A_{\delta d}$ to $af^{- d}t^{d}$. The corresponding map $\varphi \colon A^{(\delta)} \otimes_R A_{(f)} \to A_{(f)}[t]$ of graded $A_{(f)}$-algebras is surjective in degree 1, and hence induces a morphism $\Spec A_{(f)} \to \Proj^{(\delta)}(A)$.

    To show that $\{ \Spec A_{(f)} \mid f \in A_\delta \}$ is a cover, for $g \colon T \to X$ with $T$ affine, let a $T$-point of $\Proj^{(\delta)}(A)$ be classified by $(L,\varphi)$. After taking a trivializing open cover of $T$, we may assume that $L = \oO_T$. Since $\varphi$ is surjective in degree 1, the $R$-linear map $\varphi' \colon A_\delta \to \oO_Tt = \oO_T$ induced by $\varphi$ must have a unit in its image. Taking $f \in A_\delta$ such that $\varphi'(f)$ is a unit induces a morphism $T \to \Spec A_{(f)}$ over $\Proj^{(\delta)}(A)$.

    For (iii), by Lemma \ref{Lem:Projd} it suffices to show that $\Spec A_{(f)} \to \Proj^{(\delta)}(A)$ is an open immersion, for each $f \in A_\delta$. By the previous argument, a lift of a $T$-point of $\Proj^{(\delta)}(A)$ to $\Spec A_{(f)}$ is unique, which shows that $\Spec A_{(f)} \to \Proj^{(\delta)}(A)$ is a formally \'{e}tale monomorphism. By Lemma \ref{Lem:descend_linebs}, one shows that $\Spec A_{(f)} \to \Proj^{(\delta)}(A)$ is locally of finite presentation.

    (iv) follows from \cite[Prop.~5.5.5]{Hekking}, and (v) from \cite[\href{https://stacks.math.columbia.edu/tag/01NS}{Tag 01NS}]{stacks-project}.

    Finally, for (vi) one shows that 
    \begin{center}
        \begin{tikzcd}
            {[\Spec A_f / \GG_m]} \arrow[r] \arrow[d] & {[ (\Spec A \setminus V(A_+)) / \GG_m ] }\arrow[d] \\
            \Spec A_{(f)} \arrow[r] & \Proj A
        \end{tikzcd}
    \end{center}
    is Cartesian, for each $f \in A$ homogeneous of positive degree. Hence the statement follows from Lemma \ref{lem:gms_prop}(ii) together with the description of good moduli spaces of affine group quotients in Subsection~\ref{subsection:group action examples}.
\end{proof}

\begin{corollary}
\label{Cor:Proj_nat}
    Let $\varphi \colon A \to B$ be a morphism of quasi-coherent, graded $\oO_X$-algebras. Then there is a largest open substack $U(\varphi)$ of $\Proj (B)$ on which $U(\varphi) \to \Proj(A)$ is well-defined. For $f \in A$ homogeneous of positive degree, we have a Cartesian diagram
    \begin{center}
        \begin{tikzcd}
            \Spec B_{(\varphi(f))} \arrow[d] \arrow[r] & U(\varphi) \arrow[d] \\
            \Spec A_{(f)} \arrow[r] & \Proj A.
        \end{tikzcd}
    \end{center}
    In particular, $U(\varphi) \to \Proj(A)$ is affine.
\end{corollary}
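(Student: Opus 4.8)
The plan is to build $U(\varphi)$ by hand as a union of the standard affine opens of $\Proj(B)$, equip it with a morphism to $\Proj(A)$ by gluing, deduce the Cartesian square (whence affineness), and finally identify $U(\varphi)$ with the maximal open substack on which the partially-defined map $\Proj(B) \dashrightarrow \Proj(A)$ induced by $\varphi$ is defined, using the Veronese presentation $\Proj = \colim_\delta \Proj^{(\delta)}$ together with the functor of points.

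First I would fix a homogeneous $f \in A$ of positive degree $\delta$; then $\varphi(f) \in B_\delta$ is homogeneous of positive degree, so by Proposition~\ref{Prop:Proj}(iii) the stack $\Spec B_{(\varphi(f))}$ is an open substack of $\Proj(B)$, affine over $X$. Localizing $\varphi$ at $f$ and taking degree-zero parts gives a morphism of quasi-coherent $\oO_X$-algebras $A_{(f)} \to B_{(\varphi(f))}$, hence an affine morphism $\Spec B_{(\varphi(f))} \to \Spec A_{(f)}$, which I compose with the open immersion $\Spec A_{(f)} \hookrightarrow \Proj(A)$ of Proposition~\ref{Prop:Proj}(iii). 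For a second homogeneous $f' \in A$ of positive degree, one has, exactly as in the classical theory, $\Spec B_{(\varphi(f))} \cap \Spec B_{(\varphi(f'))} = \Spec B_{(\varphi(ff'))}$ inside $\Proj(B)$ and $\Spec A_{(f)} \cap \Spec A_{(f')} = \Spec A_{(ff')}$ inside $\Proj(A)$, with $A_{(ff')}$ and $B_{(\varphi(ff'))}$ the evident common localizations; compatibility of localizations then shows the two morphisms to $\Proj(A)$ agree on the overlap. They therefore glue to a morphism $u \colon U(\varphi) \to \Proj(A)$ out of the open substack $U(\varphi) \coloneqq \bigcup_f \Spec B_{(\varphi(f))} \subseteq \Proj(B)$, the union being over all homogeneous $f \in A$ of positive degree.

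Next I would check the Cartesian square. Unwinding the gluing, the preimage under $u$ of $\Spec A_{(f)}$ is $\bigcup_{f'} \Spec B_{(\varphi(ff'))}$; since $\varphi(f^2) = \varphi(f)^2$ and $B_{(\varphi(f)^2)} = B_{(\varphi(f))}$, the term with $f' = f$ already exhausts this union, so the preimage of $\Spec A_{(f)}$ is exactly $\Spec B_{(\varphi(f))}$ and the square in the statement is Cartesian. Since $\{\Spec A_{(f)}\}$ is an open cover of $\Proj(A)$ by Proposition~\ref{Prop:Proj}(ii) and each $\Spec B_{(\varphi(f))} \to \Spec A_{(f)}$ is affine, affineness of $u$ follows because affineness is local on the target.

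For the maximality claim I would argue with the functor of points. A morphism from a quasi-compact affine $X$-scheme $T$ to $\Proj(B)$ is, Zariski-locally on $T$, classified by a pair $(L,\psi)$ with $L$ a line bundle and $\psi \colon (g^*B)^{(\delta)} \to \Sym(L)$ a graded algebra map surjective in degree $1$, for a suitable $\delta$ — here one uses that the Veronese transition maps $i_{\delta,\delta'}$ are open immersions and $T$ is quasi-compact. Composing with $\varphi^{(\delta)}$ produces $\psi \circ \varphi^{(\delta)} \colon (g^*A)^{(\delta)} \to \Sym(L)$, which classifies a $T$-point of $\Proj^{(\delta)}(A)$ precisely on the open locus of $T$ where its degree-one component $(g^*A)_\delta \to L$ is surjective, an open condition since it can be tested on $\pi_0$. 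On a trivializing patch where moreover some $f \in A_\delta$ maps to a unit, the $T$-point factors through $\Spec B_{(\varphi(f))} \subseteq U(\varphi)$; conversely any $T$-point through some $\Spec B_{(\varphi(f))}$ makes $\psi \circ \varphi^{(\delta)}$ surjective in degree $1$. Hence $U(\varphi)$ represents the maximal open subfunctor of $\Proj(B)$ on which $\varphi$ induces a morphism to $\Proj(A)$, and that morphism is $u$. I expect this last step — bookkeeping the colimit over Veronese twists and pinning down the surjectivity-in-degree-one locus as exactly $U(\varphi)$ — to be the main technical obstacle; the steps before it are routine manipulations with $\Proj$ patches.
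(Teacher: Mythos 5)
Your proposal is correct, and mathematically it rests on the same foundation as the paper's proof, namely the affine charts $\Spec A_{(f)}$, $\Spec B_{(\varphi(f))}$ supplied by Proposition~\ref{Prop:Proj}(ii)--(iii). The difference is one of packaging: the paper's entire proof is the single sentence ``this follows from the classical case by the previous proposition,'' i.e.\ it treats the construction of $U(\varphi)$, the gluing, and the Cartesian square as formal consequences of the classical theory of $\Proj$ once Proposition~\ref{Prop:Proj} has identified the charts and their classical truncations. You instead carry out the construction directly in the derived setting --- gluing the affine maps $\Spec B_{(\varphi(f))}\to\Spec A_{(f)}$ over the overlaps $\Spec B_{(\varphi(ff'))}$, checking that the $f'=f$ term exhausts the preimage of $\Spec A_{(f)}$, and then characterizing $U(\varphi)$ via the functor of points as the locus where $\psi\circ\varphi^{(\delta)}$ is surjective in degree one, using quasi-compactness to descend through the Veronese colimit. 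What your version buys is a precise, intrinsic meaning of ``largest open substack on which the morphism is well-defined,'' which the paper leaves implicit (and which matters, since this corollary is what makes the saturated $\Proj$ of the main text well-posed); what the paper's version buys is brevity. All the individual steps you take (openness of surjectivity onto a line bundle tested on $\pi_0$, intersections of basic opens being $\Spec$ of the common localization, factoring a quasi-compact $T$-point through a single $\Proj^{(\delta)}$) are sound in the derived setting and at the same level of rigor as the paper's own proof of Proposition~\ref{Prop:Proj}.
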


\begin{proof}
    This follows from the classical case by the previous proposition.
\end{proof}

\begin{example}
\label{Ex:Proj_delta_twist}
    For any positive integer $\delta$, there is a natural equivalence $\Proj A^{(\delta)} \to \Proj A$.
\end{example}

\section{Blow-ups}
\label{Sec:Blups}
We recall the theory of blow-ups in derived algebraic geometry, as developed in \cite{Hekking,Weil,BenBassatHekkingBlowups}. Endow $X \times \AA^1 = \Spec \oO_X[t^{-1}]$ with the $\GG_m$-action which corresponds to giving $t^{-1}$ degree $-1$.

Let $Z \to X$ be a closed immersion of stacks. The \emph{deformation space} is the $\GG_m$-equivariant stack $D_{Z/X}$ over $X \times \AA^1$ such that the space of $\GG_m$-equivariant $T$-points of $D_{Z/X}$ over $X \times \AA^1$ is equivalent to the space of $T \times_{X \times \AA^1} (X \times \{0\})$-points of $Z \times\{0\}$ over $X \times \{0\}$. Then $D_{Z/X}$ is affine over $X \times \AA^1$. We thus obtain a quasi-coherent, $\ZZ$-graded $\oO_X[t^{-1}]$-algebra $\rR_{Z/X}^{\extd}$---called the \emph{extended Rees algebra}---such that $D_{Z/X} \simeq \Spec \rR_{Z/X}^{\extd} $ as $\GG_m$-stacks over $X \times \AA^1$.

By discarding the part in negative degrees, we obtain the \emph{Rees algebra} $\rR_{Z/X}$ of $Z \to X$. Then the \emph{blow-up} of $X$ in $Z$ is the projective spectrum of the Rees algebra:
\[ \Bl_ZX \coloneqq \Proj \rR_{Z/X}. \]

Recall that a \emph{virtual Cartier divisor} is a quasi-smooth closed immersion $D \to T$ of virtual codimension 1. Thus, locally on $T$ it is of the form $\Spec R/(f) \to \Spec R$.

The \emph{conormal complex} of a morphism of Artin stacks $X \to Y$ is defined as the shifted cotangent complex $N_{X/Y}^\vee \coloneqq \BL_{X/Y}[-1]$.

\begin{definition}
    A \emph{strict virtual Cartier divisor} over $Z \to X$ is a commutative diagram
    \begin{center}
        \begin{tikzcd}
            D \arrow[r] \arrow[d, "g"] & T \arrow[d] \\
            Z \arrow[r] & X
        \end{tikzcd}
    \end{center}
    where $D \to T$ is a virtual Cartier divisor, such that $D_\cl \cong (T \times_X Z)_\cl$ and such that $g^*N_{Z/X}^\vee \to N_{D/T}^\vee$ is surjective.
\end{definition}

\begin{example}
    The \emph{exceptional divisor} of the blow-up is the projective bundle $E_ZX \coloneqq \PP_Z(N^\vee_{Z/X})$. There is a canonical map $E_ZX \to \Bl_ZX$ which induces a strict virtual Cartier divisor over $Z \to X$.
\end{example}

\begin{theorem}
    The blow-up $\Bl_ZX$ classifies strict virtual Cartier divisors over $Z \to X$, and $E_ZX \to \Bl_ZX$ is the universal strict virtual Cartier divisor over $Z \to X$.
\end{theorem}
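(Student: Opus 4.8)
This is essentially \cite{Hekking} (see also \cite{Weil}), now formulated over an arbitrary base stack $X$; I indicate the argument. Let $\mathsf{vCart}_{Z/X}$ be the functor assigning to an $X$-stack $T$ the space of strict virtual Cartier divisors over $Z \to X$ with total space $T$. This is a stack over $X$: quasi-smooth closed immersions of virtual codimension $1$ are stable under base change and local on the target, a map $D \to Z$ over $X$ is classified by a mapping stack, the condition $D_\cl \cong (T \times_X Z)_\cl$ is Zariski-local, and surjectivity of $g^* N_{Z/X}^\vee \to N_{D/T}^\vee$ (i.e.\ connectivity of its fiber) is local. The Example preceding the theorem exhibits $E_ZX = \PP_Z(N_{Z/X}^\vee)$, embedded in $\Bl_ZX = \Proj \rR_{Z/X}$ as the zero locus of the tautological section of $\oO(1)$, as a strict virtual Cartier divisor over $Z \to X$: it is a projective bundle over $Z$, hence quasi-smooth over $Z$, with $N_{E_ZX/\Bl_ZX}^\vee \simeq \oO(1)|_{E_ZX}$, and its classical truncation is the classical exceptional divisor, which is $(\Bl_ZX \times_X Z)_\cl$. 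Pulling this divisor back along $X$-maps $T \to \Bl_ZX$ yields a morphism of stacks over $X$
\[ \Phi \colon \Bl_ZX \to \mathsf{vCart}_{Z/X} \]
sending $\id_{\Bl_ZX}$ to $E_ZX \to \Bl_ZX$; it remains to show that $\Phi$ is an equivalence, which simultaneously proves both assertions.

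Both sides commute with base change along $X' \to X$ --- the source by Proposition~\ref{Prop:Proj}(i) together with base-change invariance of the deformation space and hence of $\rR_{Z/X}$, the target by construction --- so we may assume $X = \Spec A$. As $\Phi$ is a morphism of stacks it suffices to check that it induces an equivalence on $T$-points for $T = \Spec B$ affine; and since a virtual Cartier divisor $D \hookrightarrow T$ is Zariski-locally of the form $V(f)$ (the zero locus of a section of a line bundle, trivialized to $\oO_T \xrightarrow{f} \oO_T$) and both sides satisfy Zariski descent on $T$, we may assume $D = V(f)$ is principal, so $N_{D/T}^\vee \simeq \oO_D$. For any virtual Cartier divisor the Rees algebra is $\Sym_{\oO_T}(\oO_T(-D))$, which in the principal case, after trivializing, is the polynomial algebra $B[s]$ with $s$ in degree $1$. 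By the universal property of the deformation space $D_{Z/X}$ (which corepresents the extended Rees algebra $\rR_{Z/X}^{\extd}$, equal to $\rR_{Z/X}[t^{-1}]$ with $t^{-1}$ in degree $-1$), a strict virtual Cartier divisor structure over $Z \to X$ on $V(f) \hookrightarrow T$ unwinds to a graded $B$-algebra map $B \otimes_A \rR_{Z/X} \to B[s]$; here the strictness condition ($D_\cl \cong (T \times_X Z)_\cl$ together with surjectivity of $g^* N_{Z/X}^\vee \to N_{D/T}^\vee$) translates exactly into the map being surjective in degree $1$. By Proposition~\ref{Prop:Proj}(iv) --- recall $\rR_{Z/X}$ is generated in degree $1$ \cite{Hekking} --- such a map is precisely a $T$-point of $\Proj \rR_{Z/X} = \Bl_ZX$ carrying the trivial line bundle $L = \oO_T$. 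Regluing over the chosen Zariski cover of $T$ reintroduces the line bundle $L = \oO_T(-D)$; since both $\Bl_ZX$ and $\mathsf{vCart}_{Z/X}$ are stacks, this shows that $\Phi$ induces an equivalence on all affine $T$-points, hence is an equivalence.

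The main obstacle is the local identification just sketched: one must render the universal property of the deformation space $D_{Z/X}$ precise enough to yield an equivalence of mapping \emph{spaces} (everything is $\infty$-categorical, so higher coherences must be tracked, not just a bijection of point sets), and one must match strictness --- surjectivity of conormal complexes plus agreement of classical truncations --- with surjectivity in degree $1$ of the graded algebra map. This is exactly the place where virtual codimension $1$ is used: it forces $N_{D/T}^\vee$ to be an honest line bundle and the Rees algebra of $V(f) \hookrightarrow T$ to be free on one degree-$1$ generator, so that all the $\Proj$-theoretic data organizes into a single such graded map. Everything else is formal base-change and descent bookkeeping.
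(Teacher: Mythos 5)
The paper does not actually prove this statement: Appendix~\ref{Sec:Blups} is explicitly a recollection, and the theorem is quoted from \cite{Hekking,Weil,BenBassatHekkingBlowups} with no argument supplied. So there is no internal proof to compare against; what can be assessed is whether your sketch is a faithful outline of the argument in those references, and it is. The strategy --- exhibit $E_ZX \to \Bl_ZX$ as a strict virtual Cartier divisor to get a comparison map, reduce by base change and Zariski descent to an affine base, affine test object and principal divisor, use that the Rees algebra of a trivialized virtual Cartier divisor is the free graded algebra on one degree-$1$ generator, and then invoke the universal property of the (extended) Rees algebra functor to match strict virtual Cartier divisor structures with graded algebra maps surjective in degree $1$, i.e.\ with $T$-points of $\Proj^{(1)}\rR_{Z/X}$ --- is exactly the Khan--Rydh/Hekking proof. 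You have also correctly isolated where the real content sits: the equivalence of mapping \emph{spaces} coming from the adjunction for $\rR^{\extd}_{(-)/X}$, and the translation of the two strictness conditions (classical truncation isomorphism plus conormal surjectivity) into degree-$1$ surjectivity of the graded map; neither is formal, and both are theorems of \cite{Hekking} rather than things your sketch establishes. Two small points to make explicit if you were to write this out: (a) generation of $\rR_{Z/X}$ in degree $1$, which you need in order to apply Proposition~\ref{Prop:Proj}(iv), is itself a nontrivial structural result about derived Rees algebras and in the references carries a local finite presentation hypothesis on $Z \to X$, which the theorem as stated here inherits implicitly; (b) the identification $N^\vee_{E_ZX/\Bl_ZX} \simeq \oO(1)|_{E_ZX}$ and the computation of $(E_ZX)_\cl$ also require proof, though they are standard. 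As a blind reconstruction of the cited argument, the proposal is correct in outline with no wrong steps.
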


We know that the extended Rees algebra functor 
\[ \rR_{(-)/X}^\extd \colon \mathsf{Cl}(X)^{\op} \to \QAlg^\ZZ(X \times \AA^1) \]
is a fully faithful left adjoint from the category of closed immersions $Z \to X$ into the category of $\ZZ$-graded, quasi-coherent $\oO_X[t^{-1}]$-algebras. The adjoint sends $Q \in \QAlg^\ZZ(X \times \AA^1)$ to $\Spec((Q/(t^{-1}))_0) \to X$.

\bibliographystyle{dary}
\bibliography{refs}	

\end{document}